\documentclass{article}
\usepackage{geometry}
\usepackage{amsmath}
\usepackage{latexsym}
\usepackage{amsthm}
\usepackage{amsfonts}
\usepackage{amssymb}
\usepackage{tikz}
\usepackage{tikz-cd}
\usepackage{gensymb}
\usepackage{cancel}
\usepackage{setspace}
\usepackage{mathtools}
\usepackage{framed}
\usepackage{tgbonum}
\usepackage{hyperref}
\usepackage{graphicx}
\usetikzlibrary{decorations.pathreplacing,angles,quotes}
\date{}
\author{Adam Afandi}
\title{Polynomiality of $\mathbb{Z}_2$ Hurwitz-Hodge Integrals}

\newtheorem{Definition}{Definition}
\newtheorem{Proposition}{Proposition}
\newtheorem{Lemma}{Lemma}
\newtheorem{Theorem}{Theorem}
\newtheorem{Corollary}{Corollary}
\newtheorem{Example}{Example}
\newtheorem{Remark}{Remark}
\newtheorem{Open Problem}{Open Problem}
\newtheorem{Conjecture}{Conjecture}
\newtheorem{Question}{Question}

\newcommand{\mbar}{\overline{\mathcal{M}}}
\newcommand{\bztwo}{\mathcal{B}\mathbb{Z}_2}
\newcommand{\aux}{\mbar_{0, (2g + 2)t}([\mathbb{P}^1/\mathbb{Z}_2], 1)}

\begin{document}
\maketitle

\begin{abstract}

Using Atiyah-Bott localization on the space of stable maps to the stack quotient $[\mathbb{P}^1/\mathbb{Z}_2]$, we find recursions that determine all Hodge integrals with descendent insertions at one marked point on the hyperelliptic locus $\overline{\mathcal{H}}_{g, 2g + 2} \subseteq \mbar_{g, 2g + 2}$. The initial conditions required for our recursions are gravitational descendents at one marked point, which are known to be $\frac{1}{2}$. We discover a new structure concerning these intersection numbers: for a fixed monomial of $\lambda$-classes, the resulting family of hyperelliptic Hodge integrals is polynomial in $g$. We formulate a conjecture concerning the log-concavity of the coefficients of these polynomials. Lastly, we turn our recursions into a non-linear system of partial differential equations for the generating functions of hyperelliptic Hodge integrals.  

\end{abstract}

\tableofcontents

\pagebreak


\section{Introduction}

Let $\mbar_{g, n}(\mathcal{B}G)$ be the moduli space of $n$-pointed genus $g$ stable maps into the classifying space of a finite group $G$. Interest in understanding the intersection theory of this space are twofold. The first impetus comes from the fact that these spaces are compactifications of Hurwitz spaces via admissible covers. Viewing these spaces from the perspective of stable maps into stacky points provides an alternative approach in understanding their intersection theory. The remaining reason for their study is due to the fact that when one computes \emph{twisted} Gromov-Witten invariants of an orbifold $\mathcal{X}$ using torus localization, one needs to compute \emph{Hurwitz-Hodge integrals} i.e. intersection numbers on $\mbar_{g, n}(\mathcal{B}G)$ that involve Hodge classes and $\psi$-classes. 


\subsection{Summary of Results}

Let $\mbar_{0, (2g + 2)t}(\bztwo)$ be the moduli space of stable maps from genus 0 orbifold curves to the classifying space $\bztwo := [pt./\mathbb{Z}_2]$. The `t' decoration on the marked points indicate that all of the points on the orbifold curve have non-trivial $\mathbb{Z}_2$-isotropy (they are `twisted'). This space is naturally isomorphic to $\overline{\mathcal{H}}_{g, 2g + 2} \subseteq \mbar_{g, 2g + 2}$, the moduli space of stable hyperelliptic curves with $2g + 2$ marked Weierstrass points. We define $\lambda$-classes on $\mbar_{0, (2g + 2)t}(\bztwo)$ as the pullback of the standard $\lambda$-classes $\lambda_i := c_i(\mathbb{E}_g)$ on $\mbar_{g, 2g + 2}$. There is also the map $\overline{\mathcal{H}}_{g, 2g + 2} \rightarrow \mbar_{0, 2g + 2}$ that takes a hyperelliptic covering and remembers its rational base curve, along with the branch locus. We define $\psi$-classes on $\mbar_{0, (2g + 2)t}(\bztwo)$ as the pullback of the standard $\psi$-classes $\psi_i := c_1(\mathbb{L}_i)$ on $\mbar_{0, 2g + 2}$. Similarly, we let $\mbar_{0, (2g + 2)t, 1u}(\bztwo)$ be the moduli of stable maps as before, but with an extra marked point on the orbifold curve with trivial isotropy (an `untwisted' point).

Our results are easier to state using the following multi-index notation: for a tuple of non-negative integers $\vec{i} := (i_1, \ldots, i_n) \in \mathbb{Z}_{\geq 0}^n$, define $|\vec{i}| = i_1 + \ldots + i_n$. At the heart of this paper are the following three Theorems:

\begin{Theorem}\label{Recursion}
Let $1 \leq j \leq 2g + 2$, and let $\vec{i} = (i_1, \ldots, i_n) \in \mathbb{Z}_{\geq 0}^n$. Then there exist recursions that determine all Hurwitz-Hodge integrals of the form

\begin{align}
\int_{\mbar_{0, (2g + 2)t}(\bztwo)}(\psi_j)^{2g - 1 - |\vec{i}|} & \lambda_{i_1}\ldots\lambda_{i_n} \label{twisted} \\
\int_{\mbar_{0, (2g + 2)t, 1u}(\bztwo)}(\psi_{2g + 3})^{2g - |\vec{i}|} & \lambda_{i_1}\ldots\lambda_{i_n}. \label{untwisted}
\end{align}

\noindent The only initial conditions that the recursions require are the following descendent integrals:

\begin{equation*}
\int_{\mbar_{0, (2g + 2)t}(\bztwo)}\psi_j^{2g - 1} = \int_{\mbar_{0, (2g + 2)t, 1u}(\bztwo)}\psi_{2g + 3}^{2g} = \frac{1}{2}.
\end{equation*}
\end{Theorem}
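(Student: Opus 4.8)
The plan is to run Atiyah--Bott localization on the moduli space $\aux = \mbar_{0, (2g+2)t}([\mathbb{P}^1/\mathbb{Z}_2], 1)$ of degree-one stable maps, with the torus $\mathbb{C}^*$ acting on $\mathbb{P}^1$ in the standard way. The key observation is that this space carries two natural evaluation-type morphisms: the map $\pi_1$ to $\mbar_{0,(2g+2)t}(\bztwo)$ that forgets the target (i.e., composes with the map $[\mathbb{P}^1/\mathbb{Z}_2]\to\bztwo$), and the map $\pi_2$ recording the degree-one cover together with the orbit configuration, landing in $\mbar_{0,2g+2}$. Pulling back $\lambda$- and $\psi$-classes along these and integrating a suitably chosen equivariant class $\prod(\text{equivariant }\psi\text{ or }\lambda)$ produces, after pushforward to a point, an identity that is independent of the equivariant parameter. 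Expanding this identity in the localization graph sum yields a relation among Hurwitz--Hodge integrals of the two types \eqref{twisted} and \eqref{untwisted}.

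First I would set up the fixed-point loci: these are indexed by decorated graphs where an edge maps isomorphically (as a $\mathbb{Z}_2$-orbifold line) to $[\mathbb{P}^1/\mathbb{Z}_2]$ and vertices lie over $0$ or $\infty$. Because the degree is $1$, each graph has exactly one edge, with the twisted markings distributed between the two vertices over $0$ and $\infty$; a vertex carrying $\geq 1$ twisted points contributes a factor $\mbar_{0,(2g'+2)t}(\bztwo)$ for the appropriate genus $g'$ (so that $2g'+2$ counts its markings plus the node), and a vertex with the untwisted behavior contributes the corresponding $1u$ space. I would then compute the Euler class of the virtual normal bundle edge-by-edge using the standard orbifold Quantum Riemann--Roch / vertex-edge formalism, being careful with the $\mathbb{Z}_2$-weights (half-integer-looking contributions coming from the stacky structure of the edge). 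Choosing the integrand to be a single high power of an equivariant $\psi$-class at one marked point, decorated by the monomial $\lambda_{i_1}\cdots\lambda_{i_n}$, and comparing the coefficient of the top power of the equivariant parameter on both sides, I expect to extract exactly a recursion expressing $\int (\psi_j)^{2g-1-|\vec i|}\lambda_{i_1}\cdots\lambda_{i_n}$ in terms of integrals with strictly smaller $|\vec i|$ and possibly smaller $g$, together with the mixed twisted/untwisted integrals. A second choice of integrand (placing the large $\psi$-power at the untwisted point on the $1u$ space) should close the system by giving the companion recursion for \eqref{untwisted}. The base case $|\vec i| = 0$, i.e. the pure descendent $\int \psi_j^{2g-1} = \tfrac12$ and $\int \psi_{2g+3}^{2g} = \tfrac12$, is the known string/dilaton-type evaluation on $\bztwo$ (equivalently the genus-$g$ hyperelliptic $\psi$ integral), which I would cite and treat as the only input.

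The main obstacle, I expect, is bookkeeping the orbifold/$\mathbb{Z}_2$ contributions correctly: the node connecting an edge to a vertex is a twisted point, the edge itself is a $\mu_2$-gerbe over $\mathbb{P}^1$, and the automorphisms of the fixed-point loci introduce factors of $\tfrac12$ (and the $\tfrac{1}{|\operatorname{Aut}|}$ in the virtual localization formula) that must be reconciled with the definition of $\lambda$- and $\psi$-classes as pullbacks from $\mbar_{g,2g+2}$ and $\mbar_{0,2g+2}$ respectively. A further subtlety is ensuring the recursion genuinely terminates: I would need to check that each step lowers a well-ordered quantity — presumably the lexicographic pair $(g, |\vec i|)$ or $(g, n)$ — so that finitely many applications reduce any integral in \eqref{twisted}--\eqref{untwisted} to the initial conditions. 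Once the localization identity is in hand, verifying this descent and the nonvanishing of the leading coefficient (so that one can actually solve for the target integral) is the crux; the rest is the routine extraction of coefficients from the graph sum.
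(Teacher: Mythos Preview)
Your overall strategy—localization on $\mbar_{0,(2g+2)t}([\mathbb{P}^1/\mathbb{Z}_2],1)$ with degree-one maps, graphs with a single edge, and $\psi$-contributions arising from the normal bundle $\frac{1}{t-\psi}$—matches the paper. However, two things diverge from the paper's argument, and the second is a genuine gap.

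\textbf{Choice of integrand.} The paper does \emph{not} use ``a high power of an equivariant $\psi$-class'' as its auxiliary class. Instead, for the non-pure case it integrates $\lambda_{\vec{i}}$ against a product of point-evaluation classes $\prod_{i=1}^{2+k}\mathrm{ev}_i^*(0)\cdot \mathrm{ev}_{2g+2}^*(\infty)$ (and a companion with no $\infty$ insertion), each of which simply restricts to $\pm t$ on fixed loci. The $\psi$-powers you are after appear only through the geometric series expansion of $\frac{1}{\pm t - \psi}$ in the normal bundle. Your vaguer ``equivariant $\psi$'' proposal may be workable, but it is not what the paper does, and the paper's evaluation-class trick is what makes the combinatorics of Theorem~\ref{NPHrecursion} tractable.

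\textbf{The pure Hodge case.} This is the real gap. Your plan assumes that a single localization identity will express $D_{(\vec{i}),2g+2}$ in terms of strictly smaller data. But the recursion coming from the evaluation-class integrals (Theorem~\ref{NPHrecursion}) is only valid when there is at least one $\psi$ in the $D$ integrand, i.e.\ when $|\vec{i}|\le 2g-2$. When $|\vec{i}|=2g-1$ (the pure Hodge integral $\int_{\mbar_{0,(2g+2)t}(\bztwo)}\lambda_{\vec{i}}$) that recursion is vacuous, and your induction stalls. The paper handles this case with a \emph{second}, structurally different auxiliary integral: it inserts the equivariant Chern class $c_{i_n+1}\bigl(R^1\pi_*f^*\mathcal{O}_{\mathbb{P}^1}(-1)\bigr)$ with a specific choice of torus weights on $\mathcal{O}(-1)$, computes its restriction to each fixed locus via the normalization exact sequence, and extracts a relation (Equation~\eqref{PHrecursion}) whose leading coefficient $i_n(-1)^{i_n}$ is nonzero. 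A separate lemma (Lemma~\ref{DegreesLessThan}) then checks that every other term in that relation has strictly smaller $|\vec{v}|$, so the pure case feeds back into the non-pure recursion. Without this obstruction-bundle step, your system does not close and the termination argument you sketch on $(g,|\vec{i}|)$ fails precisely at the bottom of each genus.
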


\begin{Theorem}\label{Integrality}
The Hurwitz-Hodge integrals in Equation \ref{twisted} and Equation \ref{untwisted} are integers after a suitable normalization. Specifically, for all $\vec{i} = (i_1, \ldots, i_n)$ and $g$,

\begin{align*}
& 2^{|\vec{i}| + 1} \int_{\mbar_{0, (2g + 2)t}(\bztwo)}(\psi_j)^{2g - 1 - |\vec{i}|} \lambda_{i_1}\ldots\lambda_{i_n} \in \mathbb{Z} \\
& 2^{|\vec{i}| + 1} \int_{\mbar_{0, (2g + 2)t, 1u}(\bztwo)}(\psi_{2g + 3})^{2g - |\vec{i}|} \lambda_{i_1}\ldots\lambda_{i_n} \in \mathbb{Z}.
\end{align*}

\end{Theorem}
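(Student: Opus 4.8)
The plan is to bootstrap off Theorem~\ref{Recursion}: since every integral in Equations~\ref{twisted} and~\ref{untwisted} is produced from the initial descendent values $\tfrac12$ by a finite sequence of recursive steps, it suffices to show that the normalization $2^{|\vec i|+1}$ is the correct ``denominator bookkeeping'' device that is preserved under each recursive step. Concretely, I would define for each integral a \emph{weight} equal to $|\vec i|+1$ (the number of $\lambda$-classes counted with the degree they contribute, plus one), and claim that multiplying by $2$ raised to that weight lands in $\mathbb Z$. The base case is immediate: with $\vec i$ empty, $|\vec i|+1 = 1$, and $2 \cdot \tfrac12 = 1 \in \mathbb Z$. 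So the entire content is an induction that tracks powers of $2$ through the recursion.

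The key steps, in order: first, I would write out schematically the recursion from Theorem~\ref{Recursion} — it comes from Atiyah--Bott localization on $\aux$, so each relation expresses a target integral as a $\mathbb Z$-linear (or at worst $\mathbb Z[\tfrac12]$-linear) combination of ``simpler'' integrals of the same two types, where the localization contributions introduce explicit factors of $\tfrac12$ coming from the automorphisms of the $\mathbb Z_2$-orbifold structure and from the equivariant Euler classes of the normal bundles. Second, for each term appearing on the right-hand side of a given recursion, I would verify the bookkeeping inequality: if the left-hand integral has multi-index $\vec i$ and a right-hand term has multi-index $\vec i\,'$ appearing with a coefficient whose $2$-adic valuation is $-v$, then I must check $(|\vec i\,'|+1) + v \le |\vec i|+1$, i.e. the loss of powers of two in the coefficient is compensated by the drop in $\lambda$-degree. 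This is the crucial combinatorial lemma and should follow from how the dilaton/string-type shifts and the Hodge-class splitting (via Mumford's relation $c(\mathbb E)c(\mathbb E^\vee)=1$ on the hyperelliptic locus, which on $\bztwo$ reads $\lambda$-classes as having a very constrained structure) interact with the localization weights. Third, I would run the induction on a well-ordering of the pairs $(g,\vec i)$ — say lexicographic on $(g, |\vec i|)$ or on the total number of recursive steps needed — to conclude that $2^{|\vec i|+1}$ times the integral is an integer combination of integers, hence an integer.

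The main obstacle I expect is precisely the second step: pinning down the exact $2$-adic valuation of every coefficient that appears in the localization recursion and proving the compensating inequality is never violated. The subtlety is that localization on $[\mathbb P^1/\mathbb Z_2]$ produces terms indexed by decorated graphs, and a vertex carrying a contracted genus-$h$ component contributes a Hodge integral over $\mbar_{0,\bullet t}(\bztwo)$ of lower complexity but also an equivariant factor $\frac{1}{e(\mathrm{Nor})}$ whose expansion in the equivariant parameter can a priori carry uncontrolled powers of $2$ through the ``Hurwitz numbers'' $\tfrac{1}{2}$-type normalizations of the edges. I would need to show these edge factors contribute powers of $2$ of the \emph{right sign} — i.e. that they do not worsen the denominator beyond what the $\lambda$-degree drop can absorb. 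A clean way to organize this is to prove a slightly stronger statement than Theorem~\ref{Integrality} by strengthening the inductive hypothesis to also record an upper bound on the $2$-adic valuation of each integral; with the right strengthening the induction becomes self-sustaining. If a direct term-by-term analysis of the graph sum proves too delicate, an alternative is to re-derive the recursion in a form where the only denominators introduced per step are a single explicit factor of $\tfrac12$ accompanied by the creation of exactly one new $\lambda$-class (or one unit of $\psi$-degree converted to $\lambda$-degree), which makes the bookkeeping inequality an equality and the integrality transparent.
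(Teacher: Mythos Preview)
Your strategy---induction on $(|\vec{i}|,g)$ tracking powers of $2$ through the recursions of Theorem~\ref{Recursion}---is exactly the paper's approach. But the proposal stops short of the one place where the argument has content, and your guess for how that step goes is off.

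The gap is your ``crucial combinatorial lemma.'' You phrase it as a bookkeeping \emph{inequality} to be extracted from Mumford's relation and from unspecified $2$-adic estimates on edge factors in the localization graph sum. None of that is what actually happens. Once the recursions are written down explicitly (Theorem~\ref{NPHrecursion} for the non-pure case, Equation~\eqref{PHrecursion} for the pure case $|\vec{i}|=2g-1$), the coefficients are visibly integers---binomial coefficients---multiplied by a single overall factor of~$2$. The bookkeeping is then an \emph{equality}, not an inequality: in the quadratic terms $D_{\vec{i}-\vec{\ell}}\cdot D_{\vec{\ell}}$ (or $d\cdot d$) one has
\[
2^{|\vec{i}|+1}\cdot 2 \;=\; 2^{|\vec{i}-\vec{\ell}|+1}\cdot 2^{|\vec{\ell}|+1},
\]
so multiplying the recursion by $2^{|\vec{i}|+1}$ places exactly the inductively-integral normalization on each factor. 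Mumford's relation plays no role; there are no uncontrolled denominators to worry about because the recursions in Theorem~\ref{NPHrecursion} already have integer coefficients apart from that single~$2$.

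Two things you would need to add to turn the sketch into a proof. First, the case split: when $|\vec{i}|$ is odd there is a smallest genus $g_0$ at which $D_{(\vec{i}),2g_0+2}$ is a \emph{pure} Hodge integral (no $\psi$-class), and Theorem~\ref{NPHrecursion} does not apply there; you must invoke the separate recursion~\eqref{PHrecursion} and check its half-integer coefficients $\tfrac12\binom{g-i_n+r}{r}$ are compensated by the drop $r\ge 1$ in $\lambda$-degree (this is where Lemma~\ref{DegreesLessThan} is used). Second, the induction must be organized as a double induction on $|\vec{i}|$ and then on $g$, with the base in $g$ handled by noting that the boundary terms $\vec{\ell}=\vec 0$ or $\vec{\ell}=\vec{i}$ vanish for dimension reasons at $g=g_0$, so every surviving term has strictly smaller $|\vec{\ell}|$ and the outer induction applies.
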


\begin{Theorem}\label{Polynomiality}
For a fixed $\vec{i} = (i_1, \ldots, i_n)$, the Hurwitz-Hodge integrals in Equation \ref{twisted} and Equation \ref{untwisted} are polynomial in $g$. The degree of these polynomials is bounded above by $|\vec{i}|^2 + 1$. Furthermore, there is an algorithm to compute these polynomials recursively (see Corollary \ref{binomialbasis}). 
\end{Theorem}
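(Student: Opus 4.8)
The plan is to prove Theorem \ref{Polynomiality} by induction on $|\vec{i}|$, using the recursions furnished by Theorem \ref{Recursion}. The base case $|\vec{i}| = 0$ is precisely the descendent initial conditions, which are the constant $\tfrac{1}{2}$ and hence trivially polynomial (of degree $0$) in $g$. For the inductive step one inspects the shape of the localization recursion: an integral with $\lambda$-monomial $\lambda_{i_1}\cdots\lambda_{i_n}$ and its forced power of $\psi$ is expressed as a finite sum, indexed by the $\mathbb{C}^*$-fixed graphs on the auxiliary space $\aux$, of products of strictly lower integrals (smaller $|\vec{i}|$, possibly shifted genus, possibly passing between the twisted and untwisted variants of Equations \eqref{twisted} and \eqref{untwisted}), weighted by explicit rational factors and by binomial coefficients arising from distributing the $2g+2$ (resp.\ $2g+3$) marked points among the vertices. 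The essential observation is that the set of functions $g \mapsto (\text{such an integral})$ is closed under the three operations that actually occur: (i) finite $\mathbb{Q}$-linear combination, (ii) multiplication, and (iii) summation of a polynomial in an auxiliary index $\ell$ over a range $\ell \in \{1, \ldots, 2g + c\}$ with $c$ a fixed constant. Closure under (iii) is Faulhaber's formula, $\sum_{\ell = 1}^{N} \ell^{d}$ being a polynomial of degree $d+1$ in $N$. Hence, for fixed $\vec{i}$, each integral in \eqref{twisted} and \eqref{untwisted} is a polynomial in $g$.

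The recursive algorithm, and the integrality refinement of Theorem \ref{Integrality}, both become transparent if one carries the induction in the binomial basis $\bigl\{\binom{g}{m} : m \geq 0\bigr\}$ (equivalently $\binom{2g+2}{m}$); this is the content of Corollary \ref{binomialbasis}. In that basis the summation operator of item (iii) acts by the hockey-stick identity $\sum_{\ell}\binom{\ell}{m} = \binom{\ell+1}{m+1}$, raising the top index by exactly one and preserving integrality of coefficients, so the recursion becomes a finite, effective rule producing the polynomial attached to $\vec{i}$ from those attached to the smaller tuples. One must only check that no step divides by a quantity vanishing for some integer $g$; this is where the explicit localization vertex and edge contributions must be examined, and it is consistent with Theorem \ref{Integrality}, whose denominators are pure powers of $2$.

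The main work, and the main obstacle, is the sharp degree bound $\deg \leq |\vec{i}|^2 + 1$. A naive count yields a bound that is polynomial but of too large an order, since a priori each reduction step could introduce several summations of size $\sim g$. The remedy is to strengthen the induction hypothesis to a precise degree bound $f(\vec{i})$ and to show, from the combinatorics of the fixed graphs, that passing from $|\vec{i}|$ to $|\vec{i}| - 1$ contributes at most $2|\vec{i}| - 1$ extra units of degree: at most one per each of at most $\sim |\vec{i}|$ index summations, plus the degree already carried by the lower integrals that appear. Telescoping then gives $f(\vec{i}) \leq \sum_{k = 1}^{|\vec{i}|}(2k - 1) + 1 = |\vec{i}|^2 + 1$. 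Establishing the ``at most $2|\vec{i}| - 1$'' increment requires knowing that the vertices of a fixed graph carrying Hodge classes are few, and that the pure-descendent leaves contribute genuine constants with no summation at all — exactly the structural information already packaged in Theorem \ref{Recursion}. Thus the proof reduces to a careful audit of that recursion together with the elementary closure facts above.
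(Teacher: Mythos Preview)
Your proposal has a genuine structural gap. You assert that the localization recursion expresses $D_{(\vec{i}),2g+2}$ (and $d_{(\vec{i}),2g+2}$) as a finite combination of products of integrals with \emph{strictly smaller} $|\vec{i}|$, possibly at shifted genus. This is not what the recursion of Theorem~\ref{Recursion} actually provides. In Equation~\eqref{NPHtwisted} the sum runs over all $\vec{\ell}\le\vec{i}$, and the boundary cases $\vec{\ell}=\vec{0}$ and $\vec{\ell}=\vec{i}$ produce terms $d_{(\vec{i}),2g_1+2}\cdot d_{(\vec{0}),2g_2+2}$ and $D_{(\vec{i}),2g_1+2}\cdot D_{(\vec{0}),2g_2+2}$, i.e.\ the \emph{same} $\vec{i}$ at other genera. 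Thus the relation is not a direct reduction in $|\vec{i}|$ but a coupled linear recurrence in $g$ for the pair $(D_{(\vec{i}),\,\cdot\,},\,d_{(\vec{i}),\,\cdot\,})$, with inhomogeneous part built from the strictly smaller tuples. A linear recurrence in $g$ has polynomial solutions only if its characteristic roots are all equal to $1$; this is a nontrivial fact that must be proved, not an instance of closure under Faulhaber summation.

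A second issue: even in the inhomogeneous part, the weights are binomials of the form $\binom{2g-1-k}{2g_2}$ with $g_2$ ranging up to order $g$. Sums $\sum_{g_2}\binom{2g-1}{2g_2}P(g_2)$ are of size $2^{2g}$, not polynomial, so your item (iii) does not apply as stated. The paper sidesteps this by specializing the free parameter $k$ in Theorem~\ref{NPHrecursion} to its \emph{maximal} value $k=2g-2-|\vec{i}|$ (resp.\ $2g-1-|\vec{i}|$), which collapses the summation ranges for $g_2$ to $0\le g_2\le\lfloor|\vec{i}|/2\rfloor$, a bound independent of $g$. The recurrence then has constant coefficients, and after passing to exponential generating functions and the Laplace transform the system becomes $2\times 2$ linear in $\widetilde F_{\vec{i}},\widetilde G_{\vec{i}}$. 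The heart of the argument is the combinatorial identity (Lemma~\ref{CombinatorialIdentity}) showing that the determinant of this system is exactly $(s-1)^{|\vec{i}|+1}$; this is what forces the solutions to be of the form $p(t)e^t$ (Lemma~\ref{PolynomialityExponential}) and hence the coefficients to be polynomial in $g$. The degree bound $|\vec{i}|^2+1$ then drops out of bookkeeping the pole order at $s=1$, not from a telescoping $\sum(2k-1)$ as you propose.
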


\noindent Finally, we translate the recursions obtained in Theorem \ref{Recursion} into a system of non-linear partial differential equations for the generating functions of $\mathbb{Z}_2$ Hurwitz-Hodge integrals:

\begin{Theorem}\label{PDE}
Let $s_1, s_2, \ldots$ and $t$ be formal variables. For $\vec{i} = (i_1, \ldots i_n)$, define $s^{\vec{i}} := s_1^{i_1}\ldots s_n^{i_n}$. Let $F$ and $G$ be the following generating functions for $\mathbb{Z}_2$ Hurwitz-Hodge integrals, 

\begin{align*}
& F(s_1, s_2, \ldots ; t) = F(\vec{s}, t) := \sum_{\substack{g \geq 0 \\ \vec{i} \geq \vec{0}}} \left(\int_{\mbar_{0, (2g + 2)t}(\bztwo)}(\psi_j)^{2g - 1 - |\vec{i}|} \lambda_{i_1}\ldots\lambda_{i_n}\right)s^{\vec{i}}\frac{t^{2g + 2}}{(2g + 2)!} \\
& G(s_1, s_2, \ldots ; t) = G(\vec{s}, t) := \sum_{\substack{g \geq 0 \\ \vec{i} \geq \vec{0}}}\left(\int_{\mbar_{0, (2g + 2)t, 1u}(\bztwo)}(\psi_{2g + 3})^{2g - |\vec{i}|} \lambda_{i_1}\ldots\lambda_{i_n}\right)s^{\vec{i}}\frac{t^{2g + 2}}{(2g + 2)!}
\end{align*}

\noindent Then $F$ and $G$ satisfy the following third order non-linear system of partial differential equations:

\begin{align}
& 2\partial_t^3F(\vec{s}, t)\partial_t^2F(-\vec{s}, t) = 2\partial_t^2G(\vec{s}, t)\partial_tG(-\vec{s}, t)  \label{thepdes1} \\
& 2\partial_t^2G(\vec{s}, t)G(-\vec{s}, t) = 2\partial_t^3F(\vec{s}, t)\partial_tF(-\vec{s}, t) - \partial_t^2G(\vec{s}, t) \label{thepdes2}
\end{align}
\end{Theorem}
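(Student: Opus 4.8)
The plan is to show that Theorem \ref{PDE} is essentially a repackaging of the recursions from Theorem \ref{Recursion} in the language of generating functions, so the bulk of the work is (a) identifying the precise algebraic shape of those recursions and (b) checking that the exponential-generating-function encoding with weight $t^{2g+2}/(2g+2)!$ converts the combinatorial sums appearing in the recursions into the products $\partial_t^a F(\vec s,t)\,\partial_t^b F(-\vec s,t)$ and $\partial_t^a G(\vec s,t)\,\partial_t^b G(-\vec s,t)$ that appear on both sides of Equations \ref{thepdes1}--\ref{thepdes2}. I would begin by writing the localization relations that produce Theorem \ref{Recursion} in a normalized form: each relation should express a sum over splittings of the genus $g$ and of the marked Weierstrass points into two groups (one carrying a ``$t$'' leg at the node, one carrying a ``$u$'' leg) of a product of two smaller Hurwitz-Hodge integrals, with binomial coefficients $\binom{2g+2}{2g_1+2}$ (or similar) keeping track of how the $2g+2$ branch points are distributed. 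The key bookkeeping identity is that multiplying by $t^{2g+2}/(2g+2)!$ and summing turns $\sum_{g_1+g_2=g}\binom{2g+2}{2g_1+1}(\cdots)_{g_1}(\cdots)_{g_2}$ into an ordinary product of two exponential generating series, and that replacing $\vec s$ by $-\vec s$ in one factor accounts for the sign $(-1)^{|\vec i|}$ that localization attaches to the $\lambda$-classes on one side of the node (this is where the stack quotient $[\mathbb{P}^1/\mathbb{Z}_2]$ and the $\pm$ torus weights at the two fixed points enter).

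Concretely, I would: first, recall the three distinguished fixed loci in the localization on $\mbar_{0,(2g+2)t}([\mathbb{P}^1/\mathbb{Z}_2],1)$ and on its one-untwisted-point analogue, and extract from the vanishing of the non-equivariant limit of the localized class two families of relations — one producing Equation \ref{thepdes1} and one producing Equation \ref{thepdes2}; second, strip off the $\psi_j$-power that is forced by dimension so that the free variable is only the $\lambda$-monomial, and introduce $s^{\vec i}$ to record it; third, carry out the generating-function bookkeeping: identify $\partial_t^k$ applied to $F$ (resp.\ $G$) with the series obtained by shifting $2g+2 \mapsto 2g+2-k$ in the factorial and by lowering the $\psi$-exponent accordingly, and verify that the degree-shift in $\psi$ matches the number of $t$-derivatives on each side; fourth, match the combinatorial convolution in the recursion — after the substitution $\vec s \mapsto -\vec s$ in the ``reflected'' factor — term by term against the coefficient of $s^{\vec i}\, t^{2g+2}/(2g+2)!$ in the claimed products, thereby proving that Equations \ref{thepdes1}--\ref{thepdes2} hold as identities of formal power series; fifth, note that the initial conditions $\int \psi^{2g-1} = \int \psi^{2g} = \tfrac12$ from Theorem \ref{Recursion} are exactly what fix the low-order coefficients of $F$ and $G$, so no information is lost in passing to the PDE form.

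I would expect the main obstacle to be the precise combinatorial accounting in the convolution step: getting the binomial coefficients, the factor of $2$ (the orders of the automorphism groups / the degree-$2$ cover, which is the source of every ``$2$'' in Equations \ref{thepdes1}--\ref{thepdes2}), the sign $(-1)^{|\vec i|}$ distributed between the two halves of the node, and the distribution of the $2g+2$ marked twisted points all to line up so that the raw localization recursion becomes \emph{exactly} a product of two of the series $\partial_t^a F(\pm\vec s,t)$, $\partial_t^b G(\pm\vec s,t)$ with no leftover correction terms — except for the single $-\partial_t^2 G(\vec s,t)$ appearing in Equation \ref{thepdes2}, which I anticipate arises from a boundary/unstable term in the localization graph sum (a contribution where one vertex carries too few marked points to be stable, contributing the ``constant map'' piece) and must be isolated and identified. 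A secondary, more routine obstacle is handling the two slightly different dimension constraints in Equations \ref{twisted} and \ref{untwisted} (exponent $2g-1-|\vec i|$ versus $2g-|\vec i|$) uniformly; this is just a matter of being careful that differentiating $F$ three times and $G$ twice lands both in the same ``codimension'' so that the equated series have matching $s$- and $t$-degrees. Once the recursion is written in the normalized convolution form, the translation to Theorem \ref{PDE} should be a direct, if fiddly, verification.
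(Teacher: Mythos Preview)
Your proposal is correct and follows essentially the same approach as the paper: the proof of Theorem \ref{PDE} there is a direct computation that writes out $\partial_t^aF(\pm\vec s,t)$ and $\partial_t^bG(\pm\vec s,t)$, multiplies them, and matches the coefficient of $s^{\vec i}\,t^{2g-1}/(2g-1)!$ (resp.\ $t^{2g}/(2g)!$) against the $k=0$ case of the recursions in Theorem \ref{NPHrecursion}. One small correction: the extra $-\partial_t^2G(\vec s,t)$ in Equation \ref{thepdes2} is not a boundary/unstable contribution but simply the principal term $d_{(\vec i),2g+2}$ of the recursion \eqref{NPHuntwisted} moved to the other side; once you see that, no special isolation step is needed.
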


\subsection{Background and Context}

\noindent The earliest result concerning these integrals goes back to Faber and Pandharipande in \cite{FP2000}, in which they discovered that

\begin{equation*}
\int_{\mbar_{0, (2g + 2)t}(\bztwo)}\lambda_{g - 1}\lambda_g = \frac{2^{2g} - 1}{2g}\cdot |B_{2g}|
\end{equation*}

\noindent where $B_{2g}$ is the $2g^{th}$ Bernoulli number. In \cite{Cavalieri2008}, Cavalieri generalized Faber and Pandharipande's result by considering integrals of the form

\begin{equation*}
\int_{\mbar_{0, (2g + 2)t}(\bztwo)}\psi_j^{i - 1}\lambda_{g - i}\lambda_g
\end{equation*} 

\noindent Cavalieri packaged these integrals into generating functions and found recursions for them. In \cite{Wise2016}, Wise showed that

\begin{equation*}
\int_{\mbar_{0, (2g + 2)t}(\bztwo)}\frac{c(\mathbb{E}_g^\vee)^2}{1 - \frac{\psi_1}{2}} = \left(-\frac{1}{4}\right)^g
\end{equation*}

\noindent In \cite{AbelianHurwitzHodge2011}, Johnson-Pandharipande-Tseng found an algorithm to express $G$ Hurwitz-Hodge integrals with one $\lambda$ class in terms of Hurwitz numbers. Most recently, in \cite{Afandi2019}, the author found closed form expressions for $\mathbb{Z}_2$ Hurwitz-Hodge integrals with one $\lambda$ insertion:

\begin{align*}
& \int_{\mbar_{0, (2g + 2)t}(\bztwo)}\psi_1^{2g - 1 - i}\lambda_i = \left(\frac{1}{2}\right)^{i + 1}e_i(g) \\
& \int_{\mbar_{0, (2g + 2)t, 1u}(\bztwo)}\psi_{2g + 3}^{2g - i}\lambda_i = \left(\frac{1}{2}\right)^{i + 1}\hat{e}_i(g)
\end{align*}

\noindent where $e_i(g) := e_i(1, 3, \ldots, 2g - 1)$ and $\hat{e}_i(g) := e_i(2, 4, \ldots, 2g)$, and $e_i$ is the $i^{th}$ elementary symmetric function. 

Although the computation of Hurwitz-Hodge integrals are of substantial interest and significance, there are very few results that indicate efficient ways to evaluate them. They have a reputation of being a class of   intersection numbers that are notoriously difficult to compute. Two common approaches used to tackle them are Atiyah-Bott localization, and the orbifold Grothendieck-Riemann-Roch (GRR) theorem. In both of these approaches, the main obstacle is usually an inability to tame the combinatorial complexity of the calculations. 

In addition to localization and GRR, one can use the \emph{mirror theorem} for the toric orbifolds $[\mathbb{C}^n/G]$ to compute Hurwitz-Hodge integrals. Established in \cite{CCIT2009}, the mirror theorem finds a connection between two seemingly unrelated functions, the twisted $I$-function, and the twisted $J$-function. The twisted $J$-function is a cohomology-valued generating function for Hurwitz-Hodge integrals, whereas the twisted $I$-function is a hypergeometric series. The mirror theorem says that these two functions coincide after a change of variables, along with a complicated procedure involving Birkhoff factorization. In order to extract closed form expressions for Hurwitz-Hodge integrals from the twisted $J$-function, one must find an expression for the inverse of the \emph{mirror map}. This allows one to invert the formal generating function parameters between the twisted $I$-function and the twisted $J$-function. In theory, this method of computing Hurwitz-Hodge integrals is incredibly powerful due to its breadth and scope of application. However, in practice, using the mirror theorem to compute Hurwitz-Hodge has proven to be difficult. 

In the language of \cite{CCIT2009}, this paper computes the genus 0 twisted orbifold Gromov-Witten theory of the stack quotient $[\mathbb{C}^n/\mathbb{Z}_2]$ for all $n$, without appealing to the mirror theorem. The significance of our work stems from the fact that, as far as the author is aware, there are no closed form expressions for the mirror map and its inversion for $[\mathbb{C}^n/\mathbb{Z}_2]$ when $n \geq 3$. We circumvent the mirror theorem altogether by directly tackling the recursions obtained when using torus localization. Our approach is computationally involved, however, we are successful in our combinatorial analysis. We discover a rich combinatorial structure (see Theorem \ref{Integrality} and Theorem \ref{Polynomiality}) for Hurwitz-Hodge integrals that does not obviously follow from the mirror theorem. The exciting part of this development is that, we now know what kind of structure to look for as we generalize beyond the hyperelliptic case. The most optimistic outlook is that the combinatorial formulas we see in the case when $g = 0$ and $G = \mathbb{Z}_2$ is simply a shadow of a much more general phenomenon as $g$ increases, and as $G$ varies.


\subsection{Plan of the Paper}

\noindent In Section \ref{preliminaries}, we establish all of the relevant background and definitions. Those already familiar with orbifold Gromov-Witten theory/moduli of curves can safely skip this section. Section \ref{localization} is the computational heart of the paper, and sets up the localization computations that allow us to prove Theorems \ref{Recursion}, \ref{Integrality}, and \ref{Polynomiality}. Section \ref{proofs} is the main part of the paper, in which we provide the proofs of all the main results. We end the paper by working through an example explicitly (Section \ref{example}), provide some conjectures for future work (Section \ref{conjectures}), and we give evidence for these conjectures in the Appendix.  

\section*{Acknowledgements}
The author is grateful for his thesis advisors, Mark Shoemaker and Renzo Cavalieri, for their advice, and their meticulous reading of early drafts. Conversations with Maria Gillespie helped clarify many of the  combinatorial constructions in this paper. \\

\noindent This work was partially supported by NSF grant DMS-1708104.


\section{Moduli of Stable Maps Into $\mathcal{B}\mathbb{Z}_2$}\label{Moduli of Stable Maps}\label{preliminaries}

This section establishes definitions and notation used throughout the paper. We provide only the minimum amount of mathematical machinery in order to understand the rest of the paper. We recommend  \cite{Afandi2019}, Section 2, for a more thorough discussion. 

\begin{Definition}\label{moduli}
Let $\mbar_{0, (2g + 2)t}(\bztwo)$ denote the moduli space of $(2g + 2)$-pointed genus $0$ stable maps into $\bztwo$, whose marked points are mapped into the twisted sector of the inertia stack $\mathcal{I}\mathcal{B}\mathbb{Z}_2$. Similarly, let $\mbar_{0, (2g + 2)t, 1u}(\bztwo)$ be the moduli space of $2g + 3$-pointed genus 0 stable maps into $\bztwo$, where the first $2g + 2$ marked points map into the twisted sector of $\mathcal{I}\mathcal{B}\mathbb{Z}_2$, and the last marked point maps into the untwisted sector. Lastly, we let $\mbar_{0, (2g + 2)t}([\mathbb{P}^1/\mathbb{Z}_2], 1)$ be the moduli space of $2g + 2$-pointed genus 0 stable maps of degree 1 into the stack quotient $[\mathbb{P}^1/\mathbb{Z}_2]$, where $\mathbb{Z}_2$ has the trivial action on $\mathbb{P}^1$.
\end{Definition}

There is a natural isomorphism between $\mbar_{0, (2g + 2)t}(\bztwo)$ and $\overline{\mathcal{H}}_{g, 2g + 2} \subseteq \mbar_{g, 2g + 2}$, the moduli space of $2g + 2$-pointed genus $g$ stable hyperelliptic curves, whose marked points are Weierstrass points. Similarly, $\mbar_{0, (2g + 2)t, 1u}(\bztwo)$ is isomorphic to $\overline{\mathcal{H}}_{g, 2g + 2, 2}\subseteq \mbar_{g, 2g + 3}$, the same moduli of hyperelliptic curves as before, except we also mark two points exchanged under the hyperelliptic involution. 

The spaces of stable hyperelliptic curves come equipped with the rank $g$ Hodge bundle $\mathbb{E}_g$, which we can pull back to $\mbar_{0, (2g + 2)t}(\bztwo)$ and $\mbar_{0, (2g + 2)t, 1u}(\bztwo)$. We also have maps $\mbar_{0, (2g + 2)t}(\bztwo) \rightarrow \mbar_{0, 2g + 2}$ and $\mbar_{0, (2g + 2)t, 1u}(\bztwo) \rightarrow \mbar_{0, 2g + 3}$ which forget the orbifold structure at the marked points. With this, we make the following definition:

\begin{Definition}
Whenever we refer to the classes $\lambda_i := c_i(\mathbb{E}_g)$ and $\psi_i := c_1(\mathbb{L}_i)$ in the Chow ring of $\mbar_{0, (2g + 2)t}(\bztwo)$ or $\mbar_{0, (2g + 2)t, 1u}(\bztwo)$, we mean the class $\lambda_i$ pulled back from the hyperelliptic locus, and the $\psi$-class pulled back from the moduli of pointed rational curves.
\end{Definition}   

There is an analagous description for $\lambda$-classes and $\psi$-classes on $\mbar_{0, (2g + 2)t}([\mathbb{P}^1/\mathbb{Z}_2], 1)$. Recall from Definition \ref{moduli} that we impose the trivial $\mathbb{Z}_2$-action on $\mathbb{P}^1$. This means that $[\mathbb{P}^1/\mathbb{Z}_2] = \mathbb{P}^1 \times \bztwo$. Given a moduli point $[(C, p_1, \ldots, p_{2g + 2}) \rightarrow [\mathbb{P}^1/\mathbb{Z}_2]] \in \mbar_{0, (2g + 2)t}([\mathbb{P}^1/\mathbb{Z}_2], 1)$, one can post-compose the map $C \rightarrow [\mathbb{P}^1/\mathbb{Z}_2]$ with the canonical projection map $[\mathbb{P}^1/\mathbb{Z}_2] = \mathbb{P}^1 \times \mathcal{B}\mathbb{Z}_2 \rightarrow \mathcal{B}\mathbb{Z}_2$ which is precisely the datum of a point in $\mbar_{0, (2g + 2)t}(\bztwo)$. In other words, there is a map $\mbar_{0, (2g + 2)t}([\mathbb{P}^1/\mathbb{Z}_2], 1) \rightarrow \mbar_{0, (2g + 2)t}(\bztwo)$. The $\lambda$-classes on $\mbar_{0, (2g + 2)t}([\mathbb{P}^1/\mathbb{Z}_2], 1)$ are pulled back along this map. The $\psi$-classes on $\mbar_{0, (2g + 2)t}([\mathbb{P}^1/\mathbb{Z}_2], 1)$ are defined by taking the first Chern class of the line bundle whose fibers are the cotangent spaces at the $i^{th}$ marked points of the source curve $C$.

We recall the \emph{orbifold Riemann-Roch Theorem} (see \cite{AGV2008}), applied to orbifold curves and line bundles. Let $L$ be a line bundle over an orbifold curve $\mathcal{X}$ of genus $g$, and suppose $\mathcal{X}$ has only finitely many points $p_1, \ldots, p_n \in \mathcal{X}$ that have non-trivial isotropy. If $G_{p_i}$ is the local group at $p_i$, then $G_{p_i}$ acts on $L_{p_i}$. This action takes the form $z \mapsto e^{\frac{2\pi i}{r}}z$, where $r \in \{1, 2, \ldots, |G_{p_i}|\}$. We define $\text{age}_{p_i}(L) := \frac{1}{r}$, and so the Euler characteristic of $L$ is

\begin{equation*}
\chi(L) = (1 - g) + \text{deg}(L) - \sum_{i = 1}^n\text{age}_{p_i}(L)
\end{equation*}

\noindent Consider the line bundle $\mathcal{O}_{\mathbb{P}^1}(-1)$ over $[\mathbb{P}^1/\mathbb{Z}_2]$. By this we mean, take $\mathcal{O}(-1)$ on $\mathbb{P}^1$, and pullback along the map $[\mathbb{P}^1/\mathbb{Z}_2] \rightarrow \mathbb{P}^1$ that forgets the orbifold structure, and let $\mathbb{Z}_2$ act non-trivially on the fibers. Let $[f: (C, p_1, \ldots, p_{2g + 2})\rightarrow [\mathbb{P}^1/\mathbb{Z}_2]] \in \mbar_{0, (2g + 2)t}([\mathbb{P}^1/\mathbb{Z}_2], 1)$. Then by orbifold Riemann-Roch, we have

\begin{equation*}
\chi(f^*\mathcal{O}_{\mathbb{P}^1}(-1)) = -\sum_{i = 1}^{2g + 2}\frac{1}{2} = - g - 1
\end{equation*}

\noindent Since $h^0(f^*\mathcal{O}_{\mathbb{P}^1}(-1)) = 0$, we have $h^1(f^*\mathcal{O}_{\mathbb{P}^1}(-1)) = g + 1$. This observation justifies the following:

\begin{Proposition}
Let $\pi$ be the universal family over $\mbar_{0, (2g + 2)t}([\mathbb{P}^1/\mathbb{Z}_2], 1)$,

\begin{center}
\begin{tikzcd}
\mathcal{C} \arrow[d, "\pi"] \arrow[r, "f"] & {[\mathbb{P}^1/\mathbb{Z}_2]} \\
\mbar_{0, (2g + 2)t}([\mathbb{P}^1/\mathbb{Z}_2], 1)
\end{tikzcd}
\end{center}

\noindent Then $R^1\pi_*f^*\mathcal{O}_{\mathbb{P}^1}(-1)$ is a vector bundle over $\mbar_{0, (2g + 2)t}([\mathbb{P}^1/\mathbb{Z}_2], 1)$. The fiber over $[f : (C, p_1, \ldots, p_{2g + 2} \rightarrow [\mathbb{P}^1/\mathbb{Z}_2]]$ is $H^1(C, f^*\mathcal{O}_{\mathbb{P}^1}(-1))$.

\end{Proposition}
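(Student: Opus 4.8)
The plan is to prove this by the standard cohomology-and-base-change machinery of Grothendieck, which applies verbatim to proper flat families of Deligne--Mumford stacks. First I would record the geometric setup: the universal family $\pi\colon\mathcal{C}\to\aux$ is proper and flat, and its geometric fibers are nodal orbifold curves of genus $0$, hence $1$-dimensional; moreover $f^*\mathcal{O}_{\mathbb{P}^1}(-1)$ is a line bundle on $\mathcal{C}$, so it is automatically $\pi$-flat. Since the fibers have dimension $1$, coherent cohomology along the fibers vanishes in degrees $\ge 2$, so $R^i\pi_*f^*\mathcal{O}_{\mathbb{P}^1}(-1)=0$ for $i\ge 2$ and $R^1\pi_*$ is the top nonzero direct image.

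Next I would invoke cohomology and base change in degree $1$. Because $R^2\pi_*=0$, the comparison map
\[
R^1\pi_*f^*\mathcal{O}_{\mathbb{P}^1}(-1)\otimes k(s)\longrightarrow H^1\big(\mathcal{C}_s,\;(f^*\mathcal{O}_{\mathbb{P}^1}(-1))|_{\mathcal{C}_s}\big)
\]
is surjective at every point $s$, and surjectivity at a point forces it to be an isomorphism on a neighborhood; being surjective everywhere it is therefore an isomorphism everywhere. This identifies the fiber over $[f\colon(C,p_1,\dots,p_{2g+2})\to[\mathbb{P}^1/\mathbb{Z}_2]]$ with $H^1(C,f^*\mathcal{O}_{\mathbb{P}^1}(-1))$. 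For local freeness, the base-change criterion says that $R^1\pi_*f^*\mathcal{O}_{\mathbb{P}^1}(-1)$ is locally free near $s$ precisely when the degree-$0$ comparison map $\pi_*f^*\mathcal{O}_{\mathbb{P}^1}(-1)\otimes k(s)\to H^0(\mathcal{C}_s,(f^*\mathcal{O}_{\mathbb{P}^1}(-1))|_{\mathcal{C}_s})$ is surjective; but the target is $0$ by the orbifold Riemann--Roch computation already done above (effective degree $-g-1<0$, so no global sections), so this holds trivially. Hence $R^1\pi_*f^*\mathcal{O}_{\mathbb{P}^1}(-1)$ is a vector bundle; its rank is $-\chi(f^*\mathcal{O}_{\mathbb{P}^1}(-1))=g+1$ by the same Euler-characteristic calculation. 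Alternatively, since $\aux$ is a smooth, in particular reduced, DM stack and the fiberwise $h^1\equiv g+1$ is constant, Grauert's theorem yields local freeness and compatibility with base change simultaneously.

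The only point that needs a genuine argument rather than formal input is the uniform vanishing $H^0(C,f^*\mathcal{O}_{\mathbb{P}^1}(-1))=0$ on every fiber, including the reducible and nodal ones: on the distinguished component carrying the degree-$1$ map, the half-integer ages at the $2g+2$ twisted markings push the effective degree strictly below zero, and on any rational tails contracted by $f$ the restriction of $f^*\mathcal{O}_{\mathbb{P}^1}(-1)$ has non-positive degree, so a short induction on the dual graph (via the normalization sequence) rules out global sections. Once this fiberwise vanishing is in place, the remainder is the purely formal cohomology-and-base-change package and presents no further obstacle.
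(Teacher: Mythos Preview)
Your approach is correct and is precisely the standard cohomology-and-base-change argument that the paper has in mind; the paper does not write out a proof at all, merely recording the orbifold Riemann--Roch computation $h^0=0$, $h^1=g+1$ and declaring that ``this observation justifies'' the Proposition. So you have supplied the details the paper omits, and your invocation of Grauert's theorem (constant $h^1$ over a reduced base) as an alternative is also exactly right.

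One small imprecision is worth tightening. In your fiberwise $H^0$-vanishing paragraph you place all $2g+2$ twisted markings on the distinguished degree-$1$ component; in general they are distributed across the tree, so that sentence is not literally correct (though harmless, since the degree-$1$ component already has $\deg f^*\mathcal{O}(-1)=-1$ and hence no sections regardless of ages). More importantly, on a contracted tail the restriction has degree exactly $0$, and ``non-positive degree'' alone does not kill $H^0$: the trivial bundle on $\mathbb{P}^1$ has a one-dimensional space of sections. What forces vanishing is the $\mathbb{Z}_2$-equivariance (equivalently, the ages): since $\mathbb{Z}_2$ acts nontrivially on the fibers of $\mathcal{O}_{\mathbb{P}^1}(-1)$, an equivariant section must vanish at every twisted special point, and a constant section vanishing somewhere is identically zero. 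This is the argument the paper itself uses later when restricting to fixed loci. Leaf components always carry twisted markings (by stability), so the induction on the dual graph you allude to then propagates the vanishing inward through the nodes. With that clarification your argument is complete.
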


Lastly, we recall a result due to Jarvis and Kimura \cite{JK2001}. Later in the paper, we find recursions for $\mathbb{Z}_2$ Hurwitz-Hodge integrals, and the initial conditions for these recursions come from the following result:

\begin{Proposition}\cite{JK2001}
Intersection numbers only involving descendent insertions ($\psi$-classes) on $\mbar_{0, (2g + 2)t}(\bztwo)$ and $\mbar_{0, (2g + 2)t, 1u}(\bztwo)$ are completely determined. In particular

\begin{equation*}
\int_{\mbar_{0, (2g + 2)t}(\bztwo)}\psi_i^{2g - 1} = \int_{\mbar_{0, (2g + 2)t, 1u}(\bztwo)}\psi_{2g + 3}^{2g} = \frac{1}{2}
\end{equation*}
\end{Proposition}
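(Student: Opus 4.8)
The plan is to reduce both descendent integrals to classical $\psi$-class computations on the moduli of pointed rational curves, exploiting the fact that, by definition, the $\psi$-classes here are pulled back along the forgetful maps $\pi \colon \mbar_{0, (2g+2)t}(\bztwo) \to \mbar_{0, 2g+2}$ and $\pi \colon \mbar_{0, (2g+2)t, 1u}(\bztwo) \to \mbar_{0, 2g+3}$. First I would record the dimension counts: since $\mbar_{0, (2g+2)t}(\bztwo) \cong \overline{\mathcal{H}}_{g, 2g+2}$ has dimension $2g-1$ and $\mbar_{0, (2g+2)t, 1u}(\bztwo)$ has dimension $2g$, the classes $\psi_i^{2g-1}$ and $\psi_{2g+3}^{2g}$ are of top degree, so the integrals make sense and equal the integrals of the corresponding top powers of $\psi$ over the targets, weighted by the degree of $\pi$.

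The crux is the computation of $\deg(\pi)$. I claim $\pi$ is a $\bztwo$-gerbe, so that $\pi_*[\,\cdot\,] = \tfrac12 [\,\cdot\,]$ on fundamental classes. Indeed, the coarse map is an isomorphism: a configuration of $2g+2$ branch points on a genus $0$ curve determines a unique admissible double cover, namely the hyperelliptic curve with those Weierstrass points. As stacks, however, every such cover carries the hyperelliptic involution as a canonical global automorphism---equivalently, the $\mathbb{Z}_2$ band of the twisted stable map acts trivially on the coarse source---so each fiber of $\pi$ carries an extra $\bztwo$ in its stabilizer while the target $\mbar_{0, 2g+2}$ (resp.\ $\mbar_{0, 2g+3}$) has generically trivial automorphisms. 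Adding the untwisted marked point does not disturb this, since the involution still fixes the relevant structure. Orbifold integration then gives $\deg(\pi) = \tfrac12$, exactly as $\int_{\bztwo} 1 = \tfrac12$.

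With the degree in hand, I would invoke the classical genus $0$ evaluation $\int_{\mbar_{0,n}} \psi_{i_1}^{a_1}\cdots \psi_{i_n}^{a_n} = \binom{n-3}{a_1, \ldots, a_n}$ (provable by the string equation, or by repeated use of $\psi_i$ as a pullback plus a boundary correction). In our case a single $\psi$ carries the full dimension, so $\int_{\mbar_{0, 2g+2}} \psi_i^{2g-1} = \binom{2g-1}{2g-1} = 1$ and $\int_{\mbar_{0, 2g+3}} \psi_{2g+3}^{2g} = \binom{2g}{2g} = 1$. The projection formula $\int_{\mathrm{source}} \pi^*\beta = \deg(\pi)\int_{\mathrm{target}} \beta$ then yields $\int \psi_i^{2g-1} = \tfrac12 \cdot 1 = \tfrac12$ and likewise $\int \psi_{2g+3}^{2g} = \tfrac12$, proving the stated values. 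The same pullback-plus-degree mechanism shows every pure-descendent integral on these spaces equals $\tfrac12$ times a multinomial coefficient, which is the ``completely determined'' assertion.

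I expect the main obstacle to be the rigorous justification of $\deg(\pi) = \tfrac12$, in particular the claim that $\pi$ is a gerbe over all of $\mbar_{0,n}$ and not merely over the interior: one must check that the hyperelliptic involution persists as a global automorphism over the admissible-cover boundary, where the source is a nodal orbifold curve and the double cover degenerates, and that no further generic automorphisms appear. This requires the precise comparison between twisted stable maps to $\bztwo$ and admissible $\mathbb{Z}_2$-covers, which is where I would lean on the results of Jarvis--Kimura. The remaining inputs---the dimension counts and the classical $\mbar_{0,n}$ integral---are routine.
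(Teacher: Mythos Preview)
The paper does not prove this proposition; it is stated with a citation to Jarvis--Kimura \cite{JK2001} and used as a black box for the initial conditions of the recursions. Your argument correctly supplies what the paper omits: the reduction to $\mbar_{0,n}$ via the forgetful map, the identification of that map as a $\bztwo$-gerbe (so degree $\tfrac12$), and the classical multinomial evaluation $\int_{\mbar_{0,n}}\psi_i^{n-3}=1$. This is essentially the content of the Jarvis--Kimura result specialized to $G=\mathbb{Z}_2$, so your approach is the expected one. The only point requiring care, which you flag yourself, is that the gerbe structure persists over the boundary; this follows from the Abramovich--Corti--Vistoli equivalence between twisted stable maps to $\bztwo$ and admissible $\mathbb{Z}_2$-covers, under which the hyperelliptic involution is a global automorphism of every admissible cover, nodal or not.
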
 


\section{Localization}\label{localization}


\subsection{Setup}

In this section, we describe how to use Atiyah-Bott localization to find recursions for $\mathbb{Z}_2$ Hurwitz-Hodge integrals.

In short, Atiyah-Bott localization is a standard computational technique to compute intersection numbers on a space that has a torus action. For the original exposition on localization, see \cite{AtiyahBott1994}, and for the case of integration over the moduli of stable maps, see \cite{GraberPandharipande1999}, \cite{Melissa2011}. 

We briefly recall how localization is adapted in the particular case of $\aux$. For a more detailed discussion, see \cite{Afandi2019}, Section 3. If $\gamma \in \mathbb{C}^*$, we let $\mathbb{C}^*$ act on $\mathbb{P}^1$ via $\gamma \cdot [x_0 : x_1] = [x_0 : \gamma x_1]$. This action commutes with the trivial action of $\mathbb{Z}_2$ on $\mathbb{P}^1$, so this $\mathbb{C}^*$-action lifts to the stack quotient $[\mathbb{P}^1/\mathbb{Z}_2]$. If we post compose this action with each stable map in $\aux$, we get a $\mathbb{C}^*$-action on $\aux$, and therefore, we can localize. The localization formula is a sum over the components of the $\mathbb{C}^*$-fixed loci of $\aux$. The $\mathbb{C}^*$-fixed points in $\aux$ are precisely the maps that send all of the points with non-trivial $\mathbb{Z}_2$-isotropy to the $\mathbb{C}^*$-fixed points of $[\mathbb{P}^1/\mathbb{Z}_2]$, namely, $0 := [1 : 0]$ and $\infty := [0 : 1]$. Since these maps are of degree 1, these $\mathbb{C}^*$-fixed points of $\aux$  necessarily contract rational components to $0$ and $\infty$, where the components contain all of the marked points. These two components are nodal to a $\mathbb{P}^1$ that maps isomorphically onto $[\mathbb{P}^1/\mathbb{Z}_2]$. 

With the above description, the components of the $\mathbb{C}^*$-fixed loci of $\aux$ are in bijection with \emph{localization graphs} of $\aux$ (see \cite{Afandi2019} Section 3). Localization graphs for $\aux$ are trees with two vertices, $v_0$ and $v_{\infty}$. The vertex $v_0$ represents a contracted component over $0$ and $v_{\infty}$ a contracted component over $\infty$. The vertices are connected by an edge, which represents the $\mathbb{P}^1$ mapping isomorphically onto $[\mathbb{P}^1/\mathbb{Z}_2]$. Lastly, each vertex is incident to a marked half-edge. Each half-edge corresponds to the marked points. If $\Gamma$ is a localization graph of $\aux$, then we associate to $\Gamma$ a product of moduli spaces $\mbar_{\Gamma} := \mbar_{v_0} \times \mbar_{v_\infty} \subseteq \aux$, where $\mbar_{v_0}$ and $\mbar_{v_{\infty}}$ are spaces of stable maps to $\bztwo$. 

In the present context, Atiyah-Bott localization is summed up in the following way:

\begin{Proposition}
Let $\alpha \in A^*(\mbar_{0, (2g + 2)t}([\mathbb{P}^1/\mathbb{Z}_2], 1)$. Then

\begin{equation*}
\int_{\mbar_{0, (2g + 2)t}([\mathbb{P}^1/\mathbb{Z}_2], 1)} \alpha = \sum_{\Gamma}\int_{\mbar_{\Gamma}}\frac{\alpha\vert_{\mbar_{\Gamma}}}{e(N_\Gamma)}
\end{equation*}

\noindent where the sum is over all localization graphs $\Gamma$ of $\mbar_{0, (2g + 2)t}([\mathbb{P}^1/\mathbb{Z}_2], 1)$, $e(N_{\Gamma})$ is the Euler class to the normal bundle to $\mbar_{\Gamma}$, and $\mbar_{\Gamma}$ is the corresponding moduli space for the localization graph $\Gamma$.

\end{Proposition}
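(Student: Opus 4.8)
The plan is to recognize this formula as a direct instance of the virtual localization theorem of Graber--Pandharipande \cite{GraberPandharipande1999}, in the form adapted to proper Deligne--Mumford stacks carrying a torus action (see also \cite{AGV2008}, \cite{Melissa2011}), so that the substance of the proof reduces to verifying its hypotheses and then carrying out the explicit identification of the fixed loci and normal bundle already sketched in the Setup. First I would confirm that $\aux$ is a proper Deligne--Mumford stack equipped with a $\mathbb{C}^*$-equivariant perfect obstruction theory. The $\mathbb{C}^*$-action on $\mathbb{P}^1$ commutes with the trivial $\mathbb{Z}_2$-action, hence descends to $[\mathbb{P}^1/\mathbb{Z}_2]$ and, by post-composition with each stable map, to the moduli stack itself. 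Because we restrict to genus $0$, degree $1$ maps, the stack is in fact smooth of the expected dimension, so the virtual class coincides with the usual fundamental class and the theorem of \cite{GraberPandharipande1999} specializes to classical Atiyah--Bott localization \cite{AtiyahBott1994}; the more general virtual statement is what licenses the formula even without appealing to smoothness.

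Next I would identify the $\mathbb{C}^*$-fixed locus. A stable map is fixed (up to isomorphism) precisely when every marked point and every node maps to a $\mathbb{C}^*$-fixed point of $[\mathbb{P}^1/\mathbb{Z}_2]$, namely $0$ or $\infty$, and every irreducible component is either contracted to one of these points or maps with all $\mathbb{C}^*$-movement absorbed into its image. Since the degree is $1$, there is exactly one non-contracted component, mapping isomorphically onto $[\mathbb{P}^1/\mathbb{Z}_2]$, and the remaining curve splits into two genus-$0$ pieces contracted over $0$ and over $\infty$. This is exactly the combinatorial content of the two-vertex localization graphs $\Gamma$, and it yields both the bijection between fixed components and graphs and the product description $\mbar_\Gamma = \mbar_{v_0}\times\mbar_{v_\infty}$, where each factor is a moduli space of twisted stable maps to $\bztwo$. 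I would then check that the numerator $\alpha\vert_{\mbar_\Gamma}$ is the honest pullback under this closed immersion, which follows from functoriality of the classes being restricted.

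The substantive computation, and the step I expect to be the main obstacle, is the determination of the equivariant Euler class $e(N_\Gamma)$ of the virtual normal bundle. Here I would decompose the $\mathbb{C}^*$-moving part of the tangent--obstruction complex along the standard vertex/edge/node pattern: the deformation of the degree-$1$ edge map, the node-smoothing tangent lines at the two contracted vertices, and the infinitesimal automorphisms of the edge. The delicate point is that the orbifold structure forces fractional ages at the twisted points, so the node-smoothing and edge weights become half-integer multiples of the equivariant parameter, and the $\mathbb{Z}_2$-gerbe over each fixed component contributes a compensating automorphism factor of $\tfrac{1}{2}$ that must be tracked carefully to land on the correct normalization. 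Assembling these weighted line bundles produces $e(N_\Gamma)$, and substituting into the Graber--Pandharipande formula yields exactly the stated identity. The detailed weight bookkeeping for this normal bundle is carried out in \cite{Afandi2019}, Section 3, which I would cite rather than reproduce.
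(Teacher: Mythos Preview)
Your proposal is correct and in fact more detailed than the paper's treatment: the paper does not prove this proposition at all but simply states it as a summary of the Atiyah--Bott/Graber--Pandharipande localization formula specialized to $\aux$, with the surrounding Setup paragraphs (and the reference to \cite{Afandi2019}, Section 3) supplying the description of the fixed loci and normal bundle that you sketch. Your outline of verifying the hypotheses, identifying the fixed locus via two-vertex graphs, and assembling $e(N_\Gamma)$ from node-smoothing and edge contributions matches exactly the content the paper disperses across the Setup rather than packaging as a proof.
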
 

A key step in localization is the computation of $\frac{1}{e(N_{\Gamma})}$. We refer the reader to \cite{GraberPandharipande1999}, \cite{MS03} for more details about how to derive a general formula, but in our case, there is a simple description. Throughout, we let $t$ be the equivariant parameter of the $\mathbb{C}^*$-equivariant Chow ring of $\aux$ (see \cite{Afandi2019}, Section 3). For every $\Gamma$, $\frac{1}{e(N_{\Gamma})}$ has a factor of $\frac{-1}{t^2}$. If $\mbar_{\Gamma}$ contains maps that contract a component over $0$, then $\frac{1}{e(N_{\Gamma})}$ has a factor of $\frac{1}{t - \psi_{v_0}}$ on the space $\mbar_{v_0}$, where $\psi_{v_0}$ is the $\psi$-class at the node. Similarly, if $\mbar_{\Gamma}$ contains maps that contract a component over $\infty$, $\frac{1}{e(N_{\Gamma})}$ contains a factor of $\frac{1}{-t - \psi_{v_{\infty}}}$. Finally, due to the presence of gerbe structure, we also require a \emph{gluing factor} in the calculation of $\frac{1}{e(N_\Gamma)}$ (see \cite{CC09}). In our case, the gluing factor is $\left(\frac{1}{2}\right)^{T_{\Gamma}}\left(\frac{1}{2}\right)\cdot (2)^{n_{\Gamma}}$, where $T_{\Gamma}$ is the number of contracted components with exactly two marked points, and $n_{\Gamma}$ is the number of nodes.

Our localization computations make repeated use of the formulas in Equations \ref{restrictlambda}, \ref{restrictzero}, and \ref{restrictinfinity}. These formulas indicate how $\lambda_{\vec{i}} := \lambda_{i_1}\ldots\lambda_{i_n}, \text{ev}_i^*(\infty)$, and $\text{ev}_ i^*(0)$ restrict to $\mbar_{\Gamma}$:

\begin{align}
& \lambda_{\vec{i}}\vert_{\mbar_{\Gamma}} = \sum_{\vec{i}_1 + \vec{i}_2 = \vec{i}}\lambda_{\vec{i}_1}\vert_{\mbar_{v_0}} \times \lambda_{\vec{i}_2}\vert_{\mbar_{v_{\infty}}} \label{restrictlambda}\\
& \text{ev}_i^*(0)\vert_{\mbar_{\Gamma}} = t \label{restrictzero} \\
& \text{ev}_i^*(\infty)\vert_{\mbar_{\Gamma}} = -t \label{restrictinfinity}
\end{align}

Lastly, we describe how the vector bundle bundle $R^1\pi_*f^*\mathcal{O}_{\mathbb{P}^1}(-1)$ restricts to the $\mathbb{C}^*$-fixed loci. In order to localize, we make a choice of \emph{torus weights} on $\mathcal{O}_{\mathbb{P}^1}(-1)$ (see \cite{MS03} for more details). This amounts to determining how $\mathbb{C}^*$ acts on $\mathcal{O}_{\mathbb{P}^1}(-1)\vert_0$ and $\mathcal{O}_{\mathbb{P}^1}(-1)\vert_{\infty}$. Throughout, we choose the torus weights to be $a$ on $\mathcal{O}_{\mathbb{P}^1}(-1)\vert_0$ and $a + 1$ on $\mathcal{O}_{\mathbb{P}^1}(-1)\vert_{\infty}$. In other words, for $\gamma \in \mathbb{C}^*$, the torus action on the fiber over $0$ is $\gamma \cdot z = \gamma^az$, and over $\infty$ it is $\gamma \cdot z = \gamma^{a + 1}z$.

Let $\Gamma$ be a localization graph of $\mbar_{0, (2g + 2)t}([\mathbb{P}^1/\mathbb{Z}_2], 1)$, and let $[f : (C, p_1, \ldots, p_{2g + 2}) \rightarrow ~ [\mathbb{P}^1/\mathbb{Z}_2]]$ be a representative point in the loci corresponding to $\Gamma$. We denote $C_{v_0}$ and $C_{v_\infty}$ as the contracted components over $0$ and $\infty$, respectively. We assume $C_{v_0}$ and $C_{v_\infty}$ are of positive dimension. Consider the \emph{normalization exact sequence} for $C$,

\begin{center}
\begin{tikzcd}
0 \arrow[r] & \mathcal{O}_C \arrow[r] & \mathcal{O}_{v_0} \oplus \mathcal{O}_{v_\infty} \oplus \mathcal{O}_{e} \arrow[r] & \mathcal{O}_{n_0} \oplus \mathcal{O}_{n_\infty} \arrow[r] & 0
\end{tikzcd}
\end{center}

\noindent where

\begin{itemize}
\item{$\mathcal{O}_{v_0}$ is the structure sheaf on $C_{0}$, the contracted component over $0$} 
\item{$\mathcal{O}_{v_\infty}$ is the structure sheaf on $C_{\infty}$, the contracted component over $\infty$}
\item{$\mathcal{O}_{e}$ is the structure sheaf on $C_e$, the rational curve nodal to the contracted components}
\item{$\mathcal{O}_{n_0}$ is the skyscraper sheaf supported on $n_0$, the node above $0$}
\item{$\mathcal{O}_{n_\infty}$ is the skyscraper sheaf supported on $n_\infty$, the node above $\infty$}
\end{itemize}

\noindent Tensoring this sequence with $f^*\mathcal{O}_{\mathbb{P}^1}(-1)$, and applying cohomology, we get the long exact sequence

\begin{align*}
0 \longrightarrow & H^0(C, f^*\mathcal{O}_{\mathbb{P}^1}(-1)) \longrightarrow H^0(C_0, \mathbb{C} \times C_0) \oplus H^0(C_\infty, \mathbb{C} \times C_\infty) \oplus H^0(C_e, \mathcal{O}_{\mathbb{P}^1}(-1)) \\
\vspace{1cm} \\
\longrightarrow & H^0(n_0, \mathbb{C} \times n_0) \oplus H^0(n_\infty, \mathbb{C} \times n_\infty) \longrightarrow H^1(C, f^*\mathcal{O}_{\mathbb{P}^1}(-1)) \\
\vspace{1cm} \\
\longrightarrow & H^1(C_0, \mathbb{C} \times C_0) \oplus H^1(C_\infty, \mathbb{C} \times C_\infty) \oplus H^1(C_e, f^*\mathcal{O}_{\mathbb{P}^1}(-1)) \longrightarrow 0
\end{align*}

\noindent It is clear that $H^0(C, f^*\mathcal{O}_{\mathbb{P}^1}(-1)) = H^0(C_e, \mathcal{O}_{\mathbb{P}^1}(-1)) = 0$. However, we also claim that $H^0(C_0, \mathbb{C} \times C_0) = H^0(C_\infty, \mathbb{C} \times C_\infty) = 0$. To see this let $s : C_0 \rightarrow \mathbb{C} \times C_0$. be a constant section. The section $s$ must be $\mathbb{Z}_2$-equivariant, and in particular, if $p_i \in C_0$ is a point with non-trivial isotropy, then $s(p_i) = -s(p_i)$, and therefore, $s$ is the zero section. The same argument works if we choose a section $s: C_\infty \rightarrow \mathbb{C} \times C_\infty$. In summary, the above sequence reduces to the short exact sequence

\begin{align*}
0 \longrightarrow & H^0(n_0, \mathbb{C} \times n_0) \oplus H^0(n_\infty, \mathbb{C} \times n_\infty) \longrightarrow H^1(C, f^*\mathcal{O}_{\mathbb{P}^1}(-1)) \\
\vspace{1cm} \\
\longrightarrow & H^1(C_0, \mathbb{C} \times C_0) \oplus H^1(C_\infty, \mathbb{C} \times C_\infty) \oplus H^1(C_e, f^*\mathcal{O}_{\mathbb{P}^1}(-1)) \longrightarrow 0
\end{align*}

Lets first consider the case when the corresponding moduli space for $\Gamma$ is of the form $\mbar_{0, k_1t, 1u}(\bztwo) \times \mbar_{0, k_2t, 1u}(\bztwo)$. Define $g_1 := \frac{k_1 - 2}{2}$ and $g_2 := \frac{k_2 - 2}{2}$. If $s: n_0 \rightarrow \mathbb{C} \times n_0$ is an element of $H^0(n_0, \mathbb{C}\times n_0$, then since $n_0$ has trivial isotropy, imposing $\mathbb{Z}_2$ equivariance on $s$ does nothing. Therefore, $s$ can be any constant section. A similar argument holds for $n_\infty$.  Therefore, normalization exact sequence becomes

\begin{align}\label{exactuntwisted}
0 \longrightarrow L_a \oplus L_{a + 1} \rightarrow H^1(C, f^*\mathcal{O}_{\mathbb{P}^1}(-1)) \longrightarrow \mathbb{E}^\vee_{g_1} \oplus \mathbb{E}^\vee_{g_2} \longrightarrow 0
\end{align}

\noindent where $L_a$ is defined to be the trivial equivariant line bundle with torus weight $a$ (see Definition \ref{equivariantlinebundle}). Now consider the case when $\Gamma$ corresponds to a moduli space of the form $\mbar_{0, k_1t}(\bztwo) \times \mbar_{0, k_2t}(\bztwo)$. We have $H^0(n_0, \mathbb{C} \times n_0) = H^0(n_\infty, \mathbb{C} \times n_\infty) = 0$. Indeed, if $s \in H^0(n_0, \mathbb{C} \times n_0)$, then since $n_0$ has non-trivial isotropy, $\mathbb{Z}_2$ equivariance forces $s$ to be the zero section. The same holds for $n_\infty$. In which case, we get the very simple two term sequence

\begin{align}\label{exacttwisted}
0 \rightarrow H^1(C, f^*\mathcal{O}_{\mathbb{P}^1}(-1)) \rightarrow \mathbb{E}^\vee_{g_1} \oplus \mathbb{E}^\vee_{g_2} \oplus L_{a + \frac{1}{2}} \longrightarrow 0
\end{align} 

\noindent The term $L_{a + \frac{1}{2}}$ shows up due to the following argument. If $C$ is a $\mathbb{P}^1$ with non-trivial $\mathbb{Z}_2$-isotropy at $0$ and $\infty$, and $f: C \rightarrow [\mathbb{P}^1/\mathbb{Z}_2]$ is a map of degree 1, then the orbifold Riemann-Roch theorem says that $h^1(C, f^*\mathcal{O}_{\mathbb{P}^1}(-1)) = 1$. Therefore, it only remains to compute the torus weights of this line bundle. One way to do this is to use the perspective of admissible covers: the data of $f$ is equivalent to the data of a hyperelliptic covering $\widetilde{f}: \mathbb{P}^1 \rightarrow C$, branched at $0$ and $\infty$. It turns out that $H^1(C, f^*\mathcal{O}_{\mathbb{P}^1}(-1)) = H^1(\mathbb{P}^1, \widetilde{f}^*(f^*\mathcal{O}_{\mathbb{P}^1}(-1)))$, and from this, one can derive that the torus weight is $(a + \frac{1}{2})$, see (\cite{RenzoThesis}, pg. 22) for a complete derivation. 

The sequences (\ref{exactuntwisted}) and (\ref{exacttwisted}) show up prominently when we compute $\mathbb{Z}_2$ Hurwitz-Hodge integrals that have no $\psi$-classes i.e. pure Hodge integrals.


\subsection{Non-Pure Hodge Integrals}

In this section, we compute two families of auxiliary integrals that provide recursions for $\mathbb{Z}_2$ Hurwitz-Hodge integrals with at least one $\psi$-class at one marked point. 

For the sake of readability, we make the following definitions:

\begin{Definition} The following is shorthand notation for symbols with repeated use:
\begin{itemize}
\item{$\vec{i} := (i_1, i_2, \ldots, i_n)$} 
\item{For $\vec{\ell} := (\ell_1, \ldots, \ell_n)$, we say $\vec{\ell} \leq \vec{i} \iff \ell_j \leq i_j$ for all $j$}
\item{$|\vec{i}| = i_1 + \ldots + i_n$}
\item{$\lambda_{\vec{i}} := \lambda_{i_1}\ldots\lambda_{{i_n}}$}
\item{$D_{(\vec{i}), 2g + 2} := \int_{\mbar_{0, (2g + 2)t}(\bztwo)}(\psi_j)^{2g - 1 - |\vec{i}|}\lambda_{\vec{i}}$}
\item{$d_{(\vec{i}), 2g + 2} := \int_{\mbar_{0, (2g + 2)t, 1u}(\bztwo)}(\psi_{2g + 3})^{2g - |\vec{i}|}\lambda_{\vec{i}}$}
\end{itemize}
\end{Definition}

\noindent For a fixed $\vec{i}$, and genus $g > 0$, let $k$ be an integer such that $0 \leq k \leq 2g - 2 - |\vec{i}|$. We define

\begin{equation*}
I_k := \left(\int_{\mbar_{0, (2g + 2)t}([\mathbb{P}^1/\mathbb{Z}_2], 1)}\lambda_{\vec{i}}\left(\prod_{i = 1}^{2 + k}\text{ev}_i^*(0)\right)\text{ev}_{2g + 2}^*(\infty)\right)\vert_{t = 1} = 0
\end{equation*}

\begin{Remark}
There is a slight abuse of notation. We use $t$ to refer to the equivariant parameter of the Chow ring of $\aux$, but we we also use $t$ as a decoration on marked points to indicate non-trivial isotropy. However, the contexts in which these two uses occur are distinct and discernible, so no confusion should arise.
\end{Remark}

\noindent We establish notation that enumerates which localization graphs contribute to $I_k$. 

\begin{Definition}
Define $\Gamma_j$ as the set of all localization graphs with the following properties:
\begin{enumerate}
\item{The first $2 + k$ marked points map to $0$}
\item{The last marked point maps to $\infty$}
\item{There are $j$ marked points, distinct from the first $2 + k$ marked points and the last marked point, that map to $\infty$}
\item{The remaining $2g - 1 - k - j$ marked points map to $0$}
\end{enumerate} 
\end{Definition}

\noindent In order to localize $I_k$, we need to compute the contributions coming from all of the localization graphs in $\Gamma_j$ for $0 \leq j \leq 2g - 1 - k$. If we let $\alpha$ be the integrand in $I_k$, and if we define $\text{Cont}(\Gamma_j) := \sum_{\Gamma \in \Gamma_j}\int_{\mbar_{\Gamma}}\frac{\alpha\vert_{\Gamma}}{e(N_{\Gamma})}$, we see that 

\begin{equation*}
I_k = \sum_{j = 0}^{2g - 1 - k}\text{Cont}(\Gamma_j)\vert_{t = 1}
\end{equation*}

\noindent First, consider $\Gamma_0$. This set contains only one localization graph $\Gamma$, and its corresponding moduli space is

\begin{equation*}
\mbar_{\Gamma} = \mbar_{0, (2g + 2)t}(\bztwo)
\end{equation*} 

\noindent Since $\frac{1}{e(N_\Gamma)} = \frac{-1}{t^2(t - \psi_{v_0})}$, the contribution coming from $\Gamma$ is

\begin{equation*}
(-1)\int_{\mbar_{0, (2g + 2)t}(\bztwo)}\frac{-\lambda_{\vec{i}}}{1 - \psi_j}
\end{equation*}

\noindent and therefore,

\begin{equation*}
\text{Cont}(\Gamma_0)\vert_{t = 1} = D_{(\vec{i}), 2g + 2}
\end{equation*}

\noindent Now consider $\Gamma_j$ where $j$ is odd and $1 \leq j \leq 2 \left \lfloor \frac{2g - 2 - k}{2} \right \rfloor + 1$. Let $g_1 := \frac{2g - 1 - j}{2}$ and $g_2 := \frac{j - 1}{2}$. For all $\Gamma \in \Gamma_j$, the corresponding moduli space is

\begin{equation*}
\mbar_\Gamma = \mbar_{0, (2g_1 + 2)t, 1u}(\bztwo) \times \mbar_{0, (2g_2 + 2)t, 1u}(\bztwo)
\end{equation*}

\noindent Since $\frac{1}{e(N_\Gamma)} = 2\left(\frac{-1}{t^2(t - \psi_{v_0})}\right) \times \left(\frac{1}{-t - \psi_{v_\infty}}\right)$, we have

\begin{align*}
\text{Cont}(\Gamma)\vert_{t = 1} & = 2{2g - 1 - k \choose j}(-1)\sum_{\vec{\ell}_1 + \vec{\ell}_2 = \vec{i}}\int_{\mbar_{0, (2g_1 + 2)t, 1u}(\bztwo)}\frac{\lambda_{\vec{\ell}_1}}{1 - \psi_{2g_1 + 3}}\int_{\mbar_{0, (2g_2 + 2)t, 1u}}\frac{\lambda_{\vec{\ell}_2}}{1 + \psi_{2g_2 + 3}} \\
& = 2{2g - 1 - k \choose 2g_2 + 1}(-1)\sum_{\vec{\ell}_1 + \vec{\ell}_2 = \vec{i}}(-1)^{2g_2 - |\vec{\ell}_2|}d_{\vec{\ell}_1, 2g_1 + 2}d_{\vec{\ell}_2, 2g_2 + 2} \\
& = -2{2g - 1 - k \choose 2g_2 + 1}\sum_{\vec{\ell}_2 \leq \vec{i}}(-1)^{|\vec{\ell}_2|}d_{(\vec{i} - \vec{\ell}_2), 2g_1 + 2}d_{\vec{\ell}_2, 2g_2 + 2}
\end{align*}

\noindent and therefore,

\begin{equation*}
\sum_{\substack{j \ \text{odd} \\ 1 \leq j \leq 2 \left \lfloor \frac{2g - 2 - k}{2} \right \rfloor + 1}} \text{Cont}(\Gamma_j)\vert_{t = 1} = -2\sum_{\substack{g_1 + g_2 = g - 1 \\ 0 \leq g_2 \leq \left \lfloor \frac{2g - 2 - k}{2} \right \rfloor \\ \vec{\ell}_2 \leq \vec{i}}}(-1)^{|\vec{\ell}_2|}{2g - 1 - k \choose 2g_2 + 1}d_{(\vec{i} - \vec{\ell}_2), 2g_1 + 2}d_{\vec{\ell}_2, 2g_2 + 2}
\end{equation*}

\noindent Next, we consider $\Gamma_j$ where $j$ is even, and $2 \leq j \leq 2 \left \lfloor \frac{2g - 1 - k}{2} \right \rfloor$. Let $g_1 := \frac{2g - j}{2}$ and $g_2 := \frac{j}{2}$. For each $\Gamma \in \Gamma_j$, the corresponding moduli space is

\begin{equation*}
\mbar_\Gamma = \mbar_{0, (2g_1 + 2)t}(\bztwo) \times \mbar_{0, (2g_2 + 2)t}(\bztwo)
\end{equation*}

\noindent Since $\frac{1}{e(N_\Gamma)} = 2\left(\frac{-1}{t^2(t - \psi_{v_0}}\right) \times \left(\frac{1}{-t - \psi_{\infty}}\right)$, we have

\begin{align*}
\text{Cont}(\Gamma)\vert_{t = 1} & = (2){2g - 1 - k \choose j}(-1)\sum_{\vec{\ell}_1 + \vec{\ell}_2 = \vec{i}}\int_{\mbar_{0, (2g_1 + 2)t}(\bztwo)}\frac{\lambda_{\vec{\ell}_1}}{1 - \psi_j}\int_{\mbar_{0, 2g_2 + 2}(\bztwo)}\frac{\lambda_{\vec{\ell}_2}}{1 + \psi_j} \\
& = (2){2g - 1 - k \choose 2g_2}(-1)\sum_{\vec{\ell}_1 + \vec{\ell}_2 = \vec{i}}(-1)^{2g_2 - 1 - |\vec{\ell}_2|}D_{\vec{\ell}_1, 2g_1 + 2}D_{\vec{\ell}_2, 2g_2 + 2}
\end{align*}

\noindent and therefore,

\begin{equation*}
\sum_{\substack{j \ \text{even} \\ 2 \leq j \leq 2 \left \lfloor \frac{2g - 1 - k}{2} \right \rfloor}} \text{Cont}(\Gamma_j)\vert_{t = 1} = 2\sum_{\substack{g_1 + g_2 = g \\ 1 \leq g_2 \leq \left \lfloor \frac{2g - 1 - k}{2} \right \rfloor \\ \vec{\ell} \leq \vec{i}}}(-1)^{|\vec{\ell}|}{2g - 1 - k \choose 2g_2}D_{(\vec{i} - \vec{\ell}), 2g_1 + 2}D_{\vec{\ell}, 2g_2 + 2}
\end{equation*}

\noindent Now we consider a different auxiliary integral. For fixed $\vec{i}$ and genus $g > 0$,  let $0 \leq k \leq 2g - 1 - |\vec{i}|$ and define

\begin{equation*}
\widetilde{I}_k := \left(\int_{\mbar_{0, (2g + 2)t}([\mathbb{P}^1/\mathbb{Z}_2], 1)}\lambda_{\vec{i}}\left(\prod_{i = 1}^{2 + k}\text{ev}_i^*(0)\right)\right)\vert_{t = 1} = 0
\end{equation*}

\noindent As we saw in the case of $I_k$, we first need to enumerate the localization graphs that contribute to $\widetilde{I}_k$.

\begin{Definition}
Define $\widetilde{\Gamma}_j$ as the set of all localization graphs of $\mbar_{0, (2g + 2)t}([\mathbb{P}^1/\mathbb{Z}_2], 1)$ with the following properties:

\begin{enumerate}
\item{The first $2 + k$ marked points map to 0}
\item{There are $j$ marked points, distinct from the first $2 + k$ marked points, that map to $\infty$.}
\item{The remaining $2g - k - j$ marked points map to $0$.}
\end{enumerate}
\end{Definition}

\noindent Let $\alpha$ be the integrand appearing in $\widetilde{I}_k$, and define $\text{Cont}(\widetilde{\Gamma}_j) := \sum_{\Gamma \in \Gamma_j^B}\int_{\mbar_\Gamma}\frac{\alpha\vert_{\Gamma}}{e(N_{\Gamma})}$. When we localize $\widetilde{I}_k$, we need to compute the contributions coming from the sets $\widetilde{\Gamma}_j$,

\begin{equation*}
\widetilde{I}_k = \sum_{j = 0}^{|\vec{i}| + 1}\text{Cont}(\widetilde{\Gamma}_j)\vert_{t = 1}
\end{equation*}

\noindent The computations for $\text{Cont}(\widetilde{\Gamma}_j)\vert_{t = 1}$ are analagous to the computations made in the case of $I_k$. We won't go through all of the details, but instead, simply state the results of the computation:

\begin{equation*}
\text{Cont}(\widetilde{\Gamma}_0)\vert_{t = 1} = d_{\vec{i}, 2g + 2}
\end{equation*}

\begin{equation*}
\sum_{\substack{j \ \text{odd} \\ 1 \leq j \leq 2\left\lfloor\frac{2g - 1 - k}{2}\right\rfloor + 1}}\text{Cont}(\widetilde{\Gamma}_j)\vert_{t = 1} = -2\sum_{\substack{g_1 + g_2 = g \\ 0 \leq g_2 \leq \left\lfloor\frac{2g - 1 - k}{2}\right\rfloor \\ \vec{\ell} \leq \vec{i}}}(-1)^{|\vec{\ell}|}{2g - k \choose 2g_2 + 1}D_{(\vec{i} - \vec{\ell}), 2g_1 + 2}D_{\vec{\ell}, 2g_2 + 2}
\end{equation*}

\begin{equation*}
\sum_{\substack{j \ \text{even} \\ 2 \leq j \leq 2\left\lfloor\frac{2g - k}{2}\right\rfloor}}\text{Cont}(\widetilde{\Gamma}_j)\vert_{t = 1} = 2\sum_{\substack{g_1 + g_2 = g - 1 \\ 0 \leq g_2 \leq \left\lfloor\frac{2g - k}{2} \right\rfloor - 1 \\ \vec{\ell} \leq \vec{i}}}(-1)^{|\vec{\ell}|}{2g - k \choose 2g_2 + 2}d_{(\vec{i} - \vec{\ell}), 2g_1 + 2}d_{\vec{\ell}, 2g_2 + 2}
\end{equation*}

\noindent Combining all of the previous localization computations, we obtain the following theorem:


\begin{Theorem}\label{NPHrecursion}
Fix $\vec{i}$ and $g > 0$ such that $|\vec{i}| < 2g - 1$. For $0 \leq k \leq 2g - 2 - |\vec{i}|$,

\begin{align}
D_{\vec{i}, 2g + 2} = 2 & \sum_{\substack{g_1 + g_2 = g - 1 \\ 0 \leq g_2 \leq \left\lfloor\frac{2g - 2 - k}{2}\right\rfloor \\ \vec{\ell} \leq \vec{i}}}(-1)^{|\vec{\ell}|}{2g - 1 - k \choose 2g_2 + 1}d_{(\vec{i} - \vec{\ell}), 2g_1 + 2}d_{\vec{\ell}, 2g_2 + 2} \label{NPHtwisted} \\
& - 2 \sum_{\substack{g_1 + g_2 = g \\ 1 \leq g_2 \leq \left\lfloor\frac{2g - 1 - k}{2}\right\rfloor \\ \vec{\ell} \leq \vec{i}}}(-1)^{|\vec{\ell}|}{2g - 1 - k \choose 2g_2}D_{(\vec{i} - \vec{\ell}), 2g_1 + 2}D_{\vec{\ell}, 2g_2 + 2} \nonumber
\end{align}

\noindent and for $0 \leq k \leq 2g - 1 - |\vec{i}|$,

\begin{align}
d_{\vec{i}, 2g + 2} = 2 & \sum_{\substack{g_1 + g_2 = g \\ 0 \leq g_2 \leq \left\lfloor\frac{2g - 1 - k}{2}\right\rfloor \\ \vec{\ell} \leq \vec{i}}} (-1)^{|\vec{\ell}|}{2g - k \choose 2g_2 + 1}D_{(\vec{i} - \vec{\ell}), 2g_1 + 2}D_{\vec{\ell}, 2g_2 + 2} \label{NPHuntwisted} \\
& - 2 \sum_{\substack{g_1 + g_2 = g - 1 \\ 0 \leq g_2 \leq \left\lfloor\frac{2g - k}{2}\right\rfloor - 1 \\ \vec{\ell} \leq \vec{i}}} (-1)^{|\vec{\ell}|}{2g - k \choose 2g_2 + 2}d_{(\vec{i} - \vec{\ell}), 2g_1 + 2}d_{\vec{\ell}, 2g_2 + 2} \nonumber
\end{align}

\end{Theorem}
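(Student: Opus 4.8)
\emph{Proof strategy.} The two identities are read off from the vanishing of the auxiliary integrals $I_k$ and $\widetilde{I}_k$. For each, the plan is: (i) establish the vanishing; (ii) expand by Atiyah--Bott localization, grouped as above into the graph $\Gamma_0$ (resp. $\widetilde{\Gamma}_0$), the graphs $\Gamma_j$ with $j$ odd, and the graphs $\Gamma_j$ with $j$ even; and (iii) solve the resulting linear relation for the single unknown attached to $\Gamma_0$ (resp. $\widetilde{\Gamma}_0$), namely $D_{\vec{i}, 2g + 2}$ (resp. $d_{\vec{i}, 2g + 2}$). This produces one recursion for each admissible $k$.

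For (i), note that $\mbar_{0, (2g + 2)t}([\mathbb{P}^1/\mathbb{Z}_2], 1)$ is a proper Deligne--Mumford stack of dimension $2g + 2$ (its orbifold virtual dimension, which equals the actual dimension since the obstruction vanishes in genus $0$), while the integrands have small equivariant degree. Indeed $\lambda_{\vec{i}}$ is pulled back from $\mbar_{0, (2g + 2)t}(\bztwo)$, on which $\mathbb{C}^*$ acts trivially, so it has pure equivariant degree $|\vec{i}|$, and each of $\text{ev}_i^*(0)$ and $\text{ev}_i^*(\infty)$ has equivariant degree $1$. Hence the integrand of $I_k$ has degree $|\vec{i}| + (2 + k) + 1 \le 2g + 1$ whenever $k \le 2g - 2 - |\vec{i}|$, and that of $\widetilde{I}_k$ has degree $|\vec{i}| + (2 + k) \le 2g + 1$ whenever $k \le 2g - 1 - |\vec{i}|$; in both cases the degree is strictly below the dimension, so the equivariant pushforward to a point vanishes identically in $t$, in particular at $t = 1$. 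This is exactly the range of $k$ in the statement.

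For (ii)--(iii) I would simply assemble the contribution computations already carried out above. For $I_k$: the graph $\Gamma_0$ has moduli space $\mbar_{0, (2g + 2)t}(\bztwo)$ and contributes $D_{\vec{i}, 2g + 2}$; the odd-$j$ graphs contribute, with an overall $-2$, the double sum of products of two $d$-integrals; the even-$j$ graphs contribute, with an overall $+2$, the double sum of products of two $D$-integrals; and these three families exhaust $0 \le j \le 2g - 1 - k$. Setting $I_k|_{t = 1} = 0$ and isolating the $\Gamma_0$-term gives Equation~(\ref{NPHtwisted}). The identical procedure for $\widetilde{I}_k$ --- with $\widetilde{\Gamma}_0$ now contributing $d_{\vec{i}, 2g + 2}$, the odd-$j$ graphs the $D$-$D$ double sum, and the even-$j$ graphs the $d$-$d$ double sum --- yields Equation~(\ref{NPHuntwisted}).

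The substantive difficulty has been front-loaded into the preceding localization analysis: producing $\text{Cont}(\Gamma_j)$ and $\text{Cont}(\widetilde{\Gamma}_j)$ requires splitting the normalization sequences~(\ref{exactuntwisted}) and~(\ref{exacttwisted}) according to the parity of $j$ (which controls whether the node over $\infty$ is twisted or untwisted, hence whether the contracted vertices carry an extra untwisted marking), carrying the gerbe gluing factor and the binomial $\binom{\,\cdot\,}{j}$ that counts which free marked points are sent to $\infty$, and tracking the signs from $\text{ev}_i^*(\infty)|_{\mbar_\Gamma} = -t$ and the vertex factor $\tfrac{1}{-t - \psi_{v_\infty}}$. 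Given those inputs, the residual checks for Theorem~\ref{NPHrecursion} are bookkeeping: confirming that for each admissible $k$ the index set of $j$ decomposes as $\{0\}$ together with the odd values up to $2\lfloor\tfrac{2g - 2 - k}{2}\rfloor + 1$ and the even values up to $2\lfloor\tfrac{2g - 1 - k}{2}\rfloor$, and performing the substitutions $g_2 = (j \mp 1)/2$ and $\vec{\ell}_1 + \vec{\ell}_2 = \vec{i} \leftrightarrow \vec{\ell} \le \vec{i}$ to rewrite the sums over $j$ as the stated sums over $(g_1, g_2)$ and $\vec{\ell}$. The genuine obstacle --- controlling the combinatorics of $e(N_\Gamma)$ together with the restriction formulas~(\ref{restrictlambda})--(\ref{restrictinfinity}) --- therefore lies in that preparatory work rather than in the theorem itself.
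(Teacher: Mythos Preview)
Your proposal is correct and follows essentially the same route as the paper: the theorem is obtained by setting the vanishing auxiliary integrals $I_k$ and $\widetilde{I}_k$ equal to the sum of their localization contributions (already computed in the preceding subsection), and solving for the $\Gamma_0$- (resp.\ $\widetilde{\Gamma}_0$-) term. Your dimension argument for the vanishing of $I_k$ and $\widetilde{I}_k$ is in fact more explicit than the paper, which simply asserts ``$=0$'' without justification; the count $\dim \mbar_{0,(2g+2)t}([\mathbb{P}^1/\mathbb{Z}_2],1)=2g+2$ and the degree bounds you give are correct and fill that gap.
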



\subsection{Pure Hodge Integrals}

The only integrals in which the above recursion does not apply is the case where $|\vec{i}| = 2g - 1$ (i.e. when there are no $\psi$-classes in the integrand) over the space $\mbar_{0, (2g + 2)t}(\bztwo)$. In order to form a full set of recursions, we compute a new auxiliary integral on $\aux$. 

Before we begin the localization computation, we establish the following notation:

\begin{Definition}\label{equivariantlinebundle}
Define $L_a$ to be the $\mathbb{C}^*$-equivariant trivial line bundle on $\aux$ with torus weight $a$. In other words, $\mathbb{C}^*$ acts on each fiber by $\lambda \cdot z = \lambda^a z$.
\end{Definition}

\noindent Notice that $c^{\mathbb{C}^*}(L_a) = 1 + at$. In our localization computations below, we make repeated use of the following fact concerning Chern classes (see \cite{Fulton}):

\begin{Proposition}
Let $\mathbb{E}$ be a rank $r$ vector bundle, and let $L$ be a line bundle. Then

\begin{equation*}
c_i(E \otimes L) = \sum_{j = 0}^i{r - i + j \choose j}c_{i - j}(E)c_1(L)^j
\end{equation*}
\end{Proposition}

\noindent Let $\vec{i} = (i_1, \ldots, i_{n}) \in \mathbb{Z}^n_{\geq 0}$ such that $|\vec{i}| = 2g - 1$. Without loss of generality, assume $i_n > 0$. For notational convenience, we define $\vec{m} := (i_1, \ldots, i_{n - 1})$, the tuple $\vec{i}$ that is missing the last entry. Consider the auxiliary integral

\begin{equation*}
I := \left(\int_{\aux}\lambda_{\vec{m}} \cdot c_{i_n + 1}(R^i\pi_*f^*\mathcal{O}_{\mathbb{P}^1}(-1)) \cdot \text{ev}_1^*(\infty)\right)\vert_{t = 1} = 0
\end{equation*}

\noindent where we choose the torus weights of $\mathcal{O}_{\mathbb{P}^1}(-1)$ to be $-1$ over $0$ and $0$ over $\infty$. Expressing $I$ as a sum of integrals over fixed loci results in recursions for the integrals 

\begin{equation*}
D_{(\vec{i}), 2g + 2} := \int_{\mbar_{0, (2g + 2)t}(\bztwo)}\lambda_{\vec{i}}
\end{equation*}

\noindent In order to enumerate the localization graphs that contribute to $I$, we make the following definition:

\begin{Definition}
Let $\Gamma_i$ denote the set of all localization graphs for $\aux$ with the following properties:

\begin{enumerate}
\item{The first marked point lies over $\infty$}
\item{There are $i$ marked points, distinct from the first marked point, that lie over $\infty$}
\item{The remaining $2g + 1 - i$ points lie over $0$} 
\end{enumerate}
\end{Definition}

\noindent In order to compute $I$, we compute the contributions coming from the sets $\Gamma_i$, as $i$ ranges from $0$ to $2g + 1$. Let $\alpha \in A^*(\aux)$ represent the integrand in $I$. If we define $\text{Cont}(\Gamma_i) := \sum_{\Gamma \in \Gamma_i}\int_{\mbar_{\Gamma}}\frac{\alpha\vert_{\Gamma}}{e(N_{\Gamma})}$ then

\begin{equation*}
I = \sum_{i = 0}^{2g + 1}\text{Cont}(\Gamma_i)\vert_{t = 1}
\end{equation*} 

\noindent Since $|\Gamma_i| = {2g + 1 \choose i}$, and since each contribution is the same for each $\Gamma \in \Gamma_i$, when we compute $\text{Cont}(\Gamma_i)\vert_{t = 1}$, we pick up a factor of ${2g + 1 \choose i}$. \\

\noindent First, we show that

\begin{equation*}
\text{Cont}(\Gamma_{2g - 1})\vert_{t = 1} = \text{Cont}(\Gamma_{2g + 1})\vert_{t = 1} = 0
\end{equation*}

\noindent Consider the set $\Gamma_{2g - 1}$. Each localization graph $\Gamma \in \Gamma_{2g - 1}$ corresponds to the moduli space

\begin{equation*}
\mbar_{\Gamma} = \mbar_{0, 2t, 1u}(\bztwo) \times \mbar_{0, (2(g - 1) + 2)t, 1u}(\bztwo)
\end{equation*}

\noindent Applying cohomology to the normalization exact sequence, we get

\begin{center}
\begin{tikzcd}
0 \arrow[r] & L_{-1} \oplus L_0 \arrow[r] & H^1(C, f^*(\mathcal{O}_{\mathbb{P}^1}(-1))) \arrow[r] & H^1(C_{v_\infty}, \mathbb{C}\times C_{v_\infty}) \arrow[r] & 0
\end{tikzcd}
\end{center}

\noindent so for any graph $\Gamma \in \Gamma_{2g - 1}$, we have

\begin{align*}
c_{i_n + 1}(R^1(\pi_*f^*\mathcal{O}(-1)))\vert_{\mbar_{\Gamma}, t = 1} & = c_{i_n+1}^{\mathbb{C}^*}\left(L_{-1} \oplus L_{0} \oplus \left(\mathbb{E}_{g - 1}^\vee \otimes L_0\right)\right)\vert_{t = 1} \\
& = c_{i_n + 1}^{\mathbb{C}^*}(L_{-1} \oplus (\mathbb{E}_{g - 1}^\vee \otimes L_0))\vert_{t = 1} \\ 
& = c_{i_n + 1}^{\mathbb{C}^*}\left(\mathbb{E}_{g - 1}^\vee \otimes L_0\right) - c_{i_n}^{\mathbb{C}^*}\left(\mathbb{E}_{g - 1}^\vee \otimes L_0\right)\vert_{t = 1} \\
& = \sum_{r = 0}^{i_n + 1}{g - 1 - i_n - 1 + r \choose r}c_{i_n + 1 - r}\left(\mathbb{E}_{g - 1}^\vee\right)(0)^r  \\
& - \sum_{r = 0}^{i_n}{g - 1 - i_n + r \choose r}c_{i_n - r}\left(\mathbb{E}_{g - 1}^\vee\right)(0)^r \\
& = (-1)^{i_n + 1}\lambda_{i_n + 1} - (-1)^{i_n}\lambda_{i_n} \\
& = (-1)^{i_n + 1}(\lambda_{i_n + 1} + \lambda_{i_n})
\end{align*}

\noindent For each $\Gamma \in \Gamma_{2g - 1}$, $\frac{1}{e(N_{\Gamma})} = \left(\frac{-1}{t^2(t - \psi)}\right) \times \left(\frac{1}{-t - \psi}\right)$. Therefore, we have

\begin{align*}
\text{Cont}(\Gamma_{2g - 1})\vert_{t = 1} & = {2g + 1 \choose 2g - 1}(-1)\int_{\mbar_{0. 2t, 1u}(\bztwo)}\frac{-1}{1 - \psi_3}\int_{\mbar_{0, (2(g - 1) + 2)t, 1u}(\bztwo)}\frac{(-1)^{i_n + 1}\lambda_{i_n + 1}\lambda_{\vec{m}} + (-1)^{i_n + 1}\lambda_{i_n}\lambda_{\vec{m}}}{-1 - \psi_{2g + 1}} \\
& = {2g + 1 \choose 2g - 1}(-1)^{i_n}\int_{\mbar_{0, (2(g - 1) + 2)t}(\bztwo)}\frac{\lambda_{\vec{m}}\lambda_{i_n + 1} + \lambda_{\vec{m}}\lambda_{i_n}}{1 + \psi_{2g + 1}} \\
& = 0
\end{align*}

\noindent where the last equality holds for dimension reasons, since $\text{dim}\left(\mbar_{0, (2(g - 1) + 2)t}(\bztwo)\right) = 2g - 2$. \\

Now consider the set $\Gamma_{2g + 1}$. Each localization graph $\Gamma \in \Gamma_{2g + 1}$ corresponds to the moduli space

\begin{equation*}
\mbar_{\Gamma} = \mbar_{0, (2g + 2)t, 1u}
\end{equation*}

\noindent Applying cohomology to the normalization long exact sequence, we get

\begin{center}
\begin{tikzcd}
0 \arrow[r] & L_0 \arrow[r] & H^1(C, f^*\mathcal{O}_{\mathbb{P}^1}(-1)) \arrow[r] & H^1(C_{v_{\infty}}, \mathbb{C} \times C_{v_{\infty}}) \arrow[r] & 0
\end{tikzcd}
\end{center}

\noindent Therefore, we have

\begin{align*}
c_{i_n + 1}(R^1\pi_*f^*\mathcal{O}_{\mathbb{P}^1}(-1))\vert_{\mbar_{\Gamma}, t = 1} & = c_{i_n + 1}^{\mathbb{C}^*}(L_0 \oplus (\mathbb{E}_g^\vee \otimes L_0))\vert_{t = 1} \\
& = c_{i_n + 1}^{\mathbb{C}^*}(\mathbb{E}_g^\vee \otimes L_0)\vert_{t = 1} \\
& = \sum_{r = 0}^{i_n + 1}{g - i_n - 1 + r \choose r}c_{i_n + 1 - r}(\mathbb{E}_g^\vee)(0)^r \\
& = (-1)^{i + 1}\lambda_{i + 1}
\end{align*}

Since $\frac{1}{e(N_{\Gamma})} = \frac{-1}{t^2(-t - \psi_{v_{\infty}}}$,

\begin{equation*}
\text{Cont}(\Gamma_{2g + 1})\vert_{t = 1} = (-1)\int_{\mbar_{0, (2g + 2)t, 1u}(\bztwo)}\frac{(-1)^{i_n + 1}\lambda_{i_n + 1}\lambda_{\vec{m}}}{-1 - \psi_{2g + 3}} = 0
\end{equation*}

\noindent The reason for the above vanishing is the following: in the integrand above we have a monomial of $\lambda$-classes whose total degree is $2g$. Every monomial of $\lambda$ classes on the space $\mbar_{0, (2g + 2)t, 1u}(\bztwo)$ is pulled back from $\mbar_{0, (2g +2)t}$ (along the map that forgets the untwisted point). But since $\text{dim}(\mbar_{0, (2g + 2)t}(\bztwo) = 2g - 1$, this monomial is zero in Chow. 

For the remainder of this localization computation, we organize the workflow in the following way. First, we compute $\text{Cont}(\Gamma_0)\vert_{t = 1}, \text{Cont}(\Gamma_1)\vert_{t = 1}, \text{Cont}(\Gamma_{2g})\vert_{t = 1}$; these contributions are special in that the corresponding moduli spaces are not products. We split the remaining cases into two situations: $\left\{\text{Cont}(\Gamma_j)\vert_{t = 1}\right\}_{2, \leq j \leq 2g - 2, j \ \text{even}}$, and $\left\{\text{Cont}(\Gamma_j)\vert_{t = 1}\right\}_{3 \leq j \leq 2g - 3, j \ \text{odd}}$. In the former case, the corresponding moduli spaces are products of spaces with no untwisted points, and the latter case corresponds to spaces with one untwisted point.

Lets begin with $\text{Cont}(\Gamma_0)$. There is only graph $\Gamma \in \Gamma_0$, and the corresponding moduli space is 

\begin{equation*}
\mbar_{\Gamma} = \mbar_{0, (2g + 2)t}(\bztwo)
\end{equation*}

\noindent Applying cohomology to the normalization long exact sequence, we get

\begin{center}
\begin{tikzcd}
0 \arrow[r] & H^1(C, f^*\mathcal{O}_{\mathbb{P}^1}(-1)) \arrow[r] & H^1(C_{v_0}, \mathbb{C} \times C_{v_0}) \oplus H^1(C_e, f^*\mathcal{O}_{\mathbb{P}^1}(-1)) \arrow[r] & 0
\end{tikzcd}
\end{center}

\noindent and therefore,

\begin{align*}
c_{i_n + 1}(R^1\pi_*f^*\mathcal{O}_{\mathbb{P}^1}(-1))\vert_{\mbar_{\Gamma}, t = 1} & = c_{i_n + 1}^{\mathbb{C}^*}((\mathbb{E}_g^\vee \otimes L_{-1}) \oplus L_{-\frac{1}{2}})\vert_{t = 1} \\
& = c_{i_n + 1}^{\mathbb{C}^*}(\mathbb{E}_g^\vee \otimes L_{-1}) - \frac{1}{2}tc_{i_n}^{\mathbb{C}*}(\mathbb{E}_g^\vee \otimes L_{-1})\vert_{t = 1} \\
& = \sum_{r = 0}^{i_n + 1}{g - i_n - 1 + r \choose r}c_{i_n + 1 - r}(\mathbb{E}_g^\vee)(-1)^r - \frac{1}{2}\sum_{r = 0}^{i_n}{g - i_n + r \choose r} c_{i_n - r}(\mathbb{E}_g^\vee)(-1)^r \\
& = \sum_{r = 0}^{i_n + 1}{g - i_n - 1 + r \choose r}(-1)^{i_n + 1}\lambda_{i_n + 1 - r} - \frac{1}{2}\sum_{r = 0}^{i_n}{g - i_n + r \choose r}(-1)^{i_n}\lambda_{i_n - r} \\ 
& = (-1)^{i_n + 1}\left[ \sum_{r = 0}^{i_n + 1}{g - i_n - 1 + r \choose r}\lambda_{i_n + 1 - r} + \frac{1}{2}\sum_{r = 0}^{i_n}{g - i_n + r \choose r}\lambda_{i_n - r} \right]
\end{align*}

\noindent Since $\frac{1}{e(N_{\Gamma})} = \frac{-1}{t^2(t - \psi_{v_0})}$, we have

\begin{align*}
\text{Cont}(\Gamma_0)\vert_{t = 1} = (-1) \int_{\mbar_{0, (2g + 2)t}(\bztwo)}\frac{(-1)(-1)^{i_n + 1}\left[\sum_{r = 0}^{i_n + 1}{g - i_n - 1 + r \choose r}\lambda_{\vec{m}}\lambda_{i_n + 1 - r} + \frac{1}{2}\sum_{r = 0}^{i_n}{g - i_n + r \choose r}\lambda_{\vec{m}}\lambda_{i_n - r}\right]}{1 - \psi_1}
\end{align*}

\noindent After some simplification, we obtain:

\begin{align*}
\text{Cont}(\Gamma_0)\vert_{t = 1} = (-1)^{i_n + 1} & \left(g - i_n + \frac{1}{2}\right)D_{(\vec{i}), 2g + 2} \\
& + (-1)^{i_n + 1}\sum_{r = 1}^{i_n}\left[{g - i_n + r \choose r + 1} + \frac{1}{2}{g - i_n + r \choose r}\right]D_{(\vec{m}, i_n - r), 2g + 2}
\end{align*}

\noindent Now consider $\Gamma_1$. For all $\Gamma \in \Gamma_1$, we have

\begin{equation*}
\mbar_{\Gamma} = \mbar_{0, (2(g - 1) + 2)t, 1u}(\bztwo) \times \mbar_{0, 2t, 1u}(\bztwo)
\end{equation*}

\noindent Applying cohomology to the normalization long exact sequence, we get

\begin{center}
\begin{tikzcd}
0 \arrow[r] & L_{-1} \oplus L_0 \arrow[r] & H^1(C, f^*\mathcal{O}_{\mathbb{P}^1}(-1)) \arrow[r] & H^1(C_{v_0}, \mathbb{C} \times C_{v_0}) \arrow[r] & 0
\end{tikzcd}
\end{center}

\noindent Therefore, we have

\begin{align*}
c_{i_n + 1}(R^1\pi_*f^*\mathcal{O}_{\mathbb{P}^1}(-1))\vert_{\mbar_{\Gamma}, t = 1} & = c_{i_n + 1}^{\mathbb{C}^*}(L_{-1} \oplus L_0 \oplus (\mathbb{E}_{g - 1}^\vee \otimes L_{-1}))\vert_{t = 1} \\
& = c_{i_n + 1}^{\mathbb{C}^*}(L_{-1} \oplus (\mathbb{E}_{g - 1}^\vee \otimes L_{-1}))\vert_{t = 1} \\
& = c_{i_n + 1}^{\mathbb{C}^*}(\mathbb{E}_{g - 1} \otimes L_{-1})\vert_{t = 1} - tc_{i_n}^{\mathbb{C}^*}(\mathbb{E}_{g - 1}^\vee \otimes L_{-1})\vert_{t = 1} \\
& = \sum_{r = 0}^{i_n + 1}{g - 1 - i_n - 1 + r \choose r}c_{i_n + 1 - r}(\mathbb{E}_{g - 1}^\vee)(-1)^r \\
& - \sum_{r = 0}^{i_n}{g - 1 - i_n + r \choose r}c_{i_n - r}(\mathbb{E}_{g - 1}^\vee)(-1)^r \\
& = (-1)^{i_n + 1}\left[\sum_{r = 0}^{i_n + 1}{g - 2 - i_n + r \choose r}\lambda_{i_n + 1 - r} + \sum_{r = 0}^i{g - 1 - i_n + r \choose r}\lambda_{i_n - r}\right]
\end{align*}

\noindent Since $\frac{1}{e(N_{\Gamma})} = \left(\frac{-1}{t^2(t - \psi_{v_0})}\right) \times \left(\frac{1}{-t - \psi_{v_{\infty}}}\right)$, we have

\begin{align*}
{2g + 1 \choose 1}(-1) & \int_{\mbar_{0, (2(g - 1) + 2)t, 1u}(\bztwo)}\frac{(-1)(-1)^{i_n + 1}\left[\sum_{r = 0}^{i_n + 1}{g - 2 - i_n + r \choose r} \lambda_{\vec{m}}\lambda_{i_n + 1 - r} + \sum_{r = 0}^{i_n}{g - 1 - i_n + r \choose r}\lambda_{\vec{m}}\lambda_{i_n - r}\right]}{1 - \psi_{2g + 1}} \\
& \times \int_{\mbar_{0, 2t, 1u}(\bztwo)}\frac{1}{-1 - \psi_3}
\end{align*}

\noindent After some simplification, we get:

\begin{equation*}
\text{Cont}(\Gamma_1)\vert_{t = 1} = (2g + 1)(-1)^{i_n}\left[\sum_{r = 1}^{i_n}{g - i_n + r \choose r + 1} d_{(\vec{m}, i_n - r), 2(g - 1) + 2}\right]
\end{equation*}

\noindent Now consider $\Gamma_{2g}$. For all $\Gamma \in \Gamma_{2g}$, we have

\begin{equation*}
\mbar_{\Gamma} = \mbar_{0, (2g + 2)t}(\bztwo)
\end{equation*}

\noindent Applying cohomology to the normalization long exact sequence, we get

\begin{center}
\begin{tikzcd}
0 \arrow[r] & H^1(C, f^*\mathcal{O}_{\mathbb{P}^1}(-1)) \arrow[r] & H^1(C_e, f^*\mathcal{O}_{\mathbb{P}^1}) \oplus H^1(C_{v_{\infty}}, \mathbb{C} \times C_{v_{\infty}}) \arrow[r] & 0
\end{tikzcd}
\end{center}

\noindent and therefore, 

\begin{align*}
c_{i_n + 1}(R^1\pi_*f^*\mathcal{O}_{\mathbb{P}^1}(-1))\vert_{\mbar_{\Gamma}, t = 1} & = c_{i_n + 1}^{\mathbb{C}^*}(L_{\frac{-1}{2}} \oplus (\mathbb{E}_g^\vee \otimes L_0))\vert_{t = 1} \\
& = c_{i_n + 1}^{\mathbb{C}^*}(\mathbb{E}_g^\vee \otimes L_0)\vert_{t = 1} - \frac{1}{2}tc_{i_n}^{\mathbb{C}^*}(\mathbb{E}_g^\vee \otimes L_0)\vert_{t = 1} \\
& = \sum_{r = 0}^{i_n + 1}{g - i_n - 1 + r \choose r}c_{i_n + 1 - r}(\mathbb{E}_g^\vee)(0)^r - \frac{1}{2}\sum_{r = 0}^{i_n}{g - i_n + r \choose r}c_{i_n - r}(\mathbb{E}_g^\vee)(0)^r \\
& = (-1)^{i_n + 1}\left[\lambda_{i_n + 1} + \frac{1}{2}\lambda_{i_n}\right]
\end{align*}

\noindent Since $\frac{1}{e(N_{\Gamma})} = \frac{-1}{t^2(t - \psi_{v_{\infty}}}$, it follows that

\begin{equation*}
\text{Cont}(\Gamma_{2g})\vert_{t = 1} = {2g + 1 \choose 2g}(-1)\int_{\mbar_{0, (2g + 2)t}(\bztwo)}\frac{(-1)(-1)^{i_n + 1}\left[\lambda_{\vec{m}}\lambda_{i_n + 1} + \frac{1}{2}\lambda_{\vec{m}}\lambda_{i_n}\right]}{-1 - \psi_1}
\end{equation*}

\noindent After some simplification, we have

\begin{equation*}
\text{Cont}(\Gamma_{2g})\vert_{t = 1} = (2g + 1)(-1)^{i_n}\left(\frac{1}{2}\right)D_{(\vec{i}), 2g + 2}
\end{equation*}

\noindent Now consider $\Gamma_j$, where $2 \leq j \leq 2g - 2$, and $j$ is even. First, we make the variable substitutions $g_1 := \frac{2g - j}{2}$ and $g_2 := \frac{j}{2}$, so that for $\Gamma \in \Gamma_{j}$, we have

\begin{align*}
\mbar_{\Gamma} & = \mbar_{0, (2g + 2 - j)t}(\bztwo) \times \mbar_{0, (j + 2)t}(\bztwo) \\
& = \mbar_{0, (2g_1 + 2)t}(\bztwo) \times \mbar_{0, (2g_2 + 2)t}(\bztwo)
\end{align*}

\noindent Applying cohomology to the normalization long exact sequence, we get

\begin{center}
\begin{tikzcd}
0 \arrow[r] & H^1(C, f^*\mathcal{O}_{\mathbb{P}^1}(-1)) \arrow[r] & H^1(C_e, f^*\mathcal{O}_{\mathbb{P}^1}(-1)) \oplus H^1(C_{v_0}, \mathbb{C} \times C_{v_0}) \oplus H^1(C_{v_{\infty}}, \mathbb{C} \times C_{v_{\infty}}) \arrow[r] & 0
\end{tikzcd}
\end{center}

\noindent Therefore, we have

\begin{align*}
c_{i_n + 1}(R^1\pi_*f^*\mathcal{O}_{\mathbb{P}^1}(-1))\vert_{\mbar_{\Gamma}, t = 1} = c_{i_n + 1}^{\mathbb{C}^*}(L_{\frac{-1}{2}} \oplus (\mathbb{E}_{g_1}^\vee \otimes L_{-1}) \oplus (\mathbb{E}_{g_2}^\vee \otimes L_0))\vert_{t = 1}
\end{align*}

\noindent After a bit of unraveling, the above becomes

\begin{equation*}
(-1)^{i_n + 1}\left[\sum_{p + q = i_n + 1}\left(\sum_{r = 0}^p{g_1 - p + r \choose r}\lambda_{p - r}\right) \times (\lambda_q) + \frac{1}{2}\sum_{p + q = i_n} \left(\sum_{r = 0}^p{g_1 - p + r \choose r}\lambda_{p - r}\right) \times (\lambda_q)\right]
\end{equation*}

\noindent Since $\frac{1}{e(N_{\Gamma})} = (2)\left(\frac{-1}{t^2(t - \psi_{v_0})}\right) \times \left(\frac{1}{-t - \psi_{v_{\infty}}}\right)$,

\begin{align*}
& \text{Cont}(\Gamma_j)\vert_{t = 1} = \\
& {2g + 1 \choose j}(-1)(2)\left[(-1)^{i_n + 1}\sum_{\substack{\vec{\ell}_1 + \vec{\ell}_2 =\vec{m} \\ p + q = i_n + 1}}\int_{\mbar_{0, (2g_1 + 2)t}(\bztwo)}\frac{(-1)\left(\sum_{r = 0}^p{g_1 - p + r \choose r}\lambda_{\vec{\ell}_1}\lambda_{p - r}\right)}{1 - \psi_1}\int_{\mbar_{0, (2g_2 + 2)t}(\bztwo)}\frac{\lambda_{\vec{\ell}_2}\lambda_q}{-1 - \psi_1} \right. \\
& \left. + \frac{1}{2}(-1)^{i_n + 1}\sum_{\substack{\vec{\ell}_1 + \vec{\ell}_2 = \vec{m} \\ p + q = i_n}}\int_{\mbar_{0, (2g_1 + 2)t}(\bztwo)}\frac{(-1)\left(\sum_{r = 0}^p{g_1 - p + r \choose r}\lambda_{\vec{\ell}_1}\lambda_{p - r}\right)}{1 - \psi_1}\int_{\mbar_{0, (2g_2 + 2)t}(\bztwo)}\frac{\lambda_{\vec{\ell}_2}\lambda_q}{-1 - \psi_1}\right]
\end{align*}

\noindent Since summing across all even $j$ such that $2 \leq j \leq 2g - 2$ is equivalent to summing across all $g_1, g_2, g_i > 0$ such that $g_1 + g_2 = g$, we have

\begin{align*}
& \sum_{\substack{2 \leq j \leq 2g - 2 \\ j \ \text{even}}}\text{Cont}(\Gamma_j)\vert_{t = 1} = \\
& (2)(-1)^{i_n}\sum_{\substack{g_1 + g_2 = g, \ g_i > 0 \\ \vec{\ell}_1 + \vec{\ell}_2 = \vec{m} \\ p + q = i_n + 1 \\ 0 \leq r \leq p}} (-1)^{2g_2 - 1- q - |\vec{\ell}_2|}{2g + 1 \choose 2g_2}{g_1 - p + r \choose r}D_{(\vec{\ell}_1, p - r), 2g_1 + 2}D_{(\vec{\ell}_2, q), 2g_2 + 2} \\
& + (-1)^{i_n}\sum_{\substack{g_1 + g_2 = g, \ g_i > 0 \\ \vec{\ell}_1 + \vec{\ell}_2 = \vec{m} \\ p + q = i_n \\ 0 \leq r \leq p}} (-1)^{2g_2 - 1- q - |\vec{\ell}_2|}{2g + 1 \choose 2g_2}{g_1 - p + r \choose r}D_{(\vec{\ell}_1, p - r), 2g_1 + 2}D_{(\vec{\ell}_2, q), 2g_2 + 2}
\end{align*}

Lastly, consider $\Gamma_j$ where $3 \leq j \leq 2g - 3$, and $j$ is odd. At this point, the reader should understand how these contributions are calculated, and therefore, we expedite the exposition by simply stating the results of the computations. For all $\Gamma \in \Gamma_j$, we have

\begin{align*}
c_{i_n + 1}(R^1\pi_*f^*\mathcal{O}_{\mathbb{P}^1}(-1))\vert_{\mbar_{\Gamma}, t = 1} & = (-1)^{i_n + 1}\left[\sum_{p + q = i_n + 1}\left(\sum_{r = 0}^p{g_1 - p + r \choose r}\lambda_{p - r}\right) \times (\lambda_q)\right. \\
& \left. + \sum_{p + q = i_n}\left(\sum_{r = 0}^p{g_1 - p + r \choose r}\lambda_{p - r}\right) \times (\lambda_q) \right]
\end{align*} 

\noindent and therefore,

\begin{align*}
& \sum_{\substack{3 \leq j \leq 2g - 3 \\ j \ \text{odd}}}\text{Cont}(\Gamma_j)\vert_{t = 1} = \\
& (-1)^{i_n}(2)\sum_{\substack{g_1 + g_2 = g - 1, \ g_i > 0 \\ \vec{\ell}_1 + \vec{\ell}_2 = \vec{m} \\ p + q = i_n + 1 \\ 0 \leq r \leq p}} (-1)^{2g_2 - q - |\vec{\ell}_2|}{2g + 1 \choose 2g_2 + 1}{g_1 - p + r \choose r}d_{(\vec{\ell}_1, p - r), 2g_1 + 2}d_{(\vec{\ell}_2, q), 2g_2 + 2} \\
+ & (-1)^{i_n}(2)\sum_{\substack{g_1 + g_2 = g - 1, \ g_i > 0 \\ \vec{\ell}_1 + \vec{\ell}_2 = \vec{m} \\ p + q = i _n\\ 0 \leq r \leq p}} (-1)^{2g_2 - q - |\vec{\ell}_2|}{2g + 1 \choose 2g_2 + 1}{g_1 - p + r \choose r}d_{(\vec{\ell}_1, p - r), 2g_1 + 2}d_{(\vec{\ell}_2, q), 2g_2 + 2}
\end{align*}


\section{Proofs}\label{proofs}

\begin{proof}[Proof of Theorem \ref{Recursion}]
We have recursions for Hodge integrals that have at lease one $\psi$-class insertion (see Theorem \ref{NPHrecursion}). In order to obtain a complete set of recursions that determine all of the intersection numbers $D_{(\vec{i}), 2g + 2}$ and $d_{(\vec{i}), 2g + 2}$, we need a recursion for Hodge integrals $D_{(\vec{i}), 2g + 2}$, where $|\vec{i}| = 2g - 1$. In the previous section, we computed a vanishing localization computation that involved integrals $D_{(\vec{i}), 2g + 2}$ , where $\vec{i} = (i_1, \ldots, i_n), |\vec{i}| = 2g - 1$, and without loss of generality, we assumed $i_n > 0$. It only remains to isolate the term $D_{(\vec{i}), 2g + 2}$, and check that the remaining terms involve either Hodge monomials of lower degree, or Hodge integrals of lower genus. \\

\noindent Since $I = 0$,

\begin{equation*}
\left[\text{Cont}(\Gamma_0) + \text{Cont}(\Gamma_1) + \text{Cont}(\Gamma_{2g}) + \sum_{\substack{2 \leq j \leq 2g - 2 \\ j \ \text{even}}}\text{Cont}(\Gamma_j) + \sum_{\substack{3 \leq j \leq 2g - 3 \\ j \ \text{odd}}} \text{Cont}(\Gamma_j)\right]\vert_{t = 1} = 0
\end{equation*}  

\noindent The only terms that contain $D_{(\vec{i}), 2g + 2}$ are $\text{Cont}(\Gamma_0)$ and $\text{Cont}(\Gamma_{2g})$. and therefore,

\begin{align*}
& (-1)^{i_n + 1}\left(g - i_n + \frac{1}{2}\right)D_{(\vec{i}), 2g + 2} + (2g + 1)(-1)^{i_n}\left(\frac{1}{2}\right)D_{(\vec{i}), 2g + 2}  \\
& + (-1)^{i_n + 1}\sum_{r = 1}^{i_n}\left[{g - i_n + r \choose r + 1} + \frac{1}{2}{g - i_n + r \choose r}\right]D_{(\vec{m}, i_n - r), 2g + 2} \\
& + \left[\text{Cont}(\Gamma_1) + \sum_{\substack{2 \leq j \leq 2g - 2 \\ j \ \text{even}}}\text{Cont}(\Gamma_j) + \sum_{\substack{3 \leq j \leq 2g - 3 \\ j \ \text{odd}}} \text{Cont}(\Gamma_j)\right]\vert_{t = 1} = 0
\end{align*}

\noindent Since

\begin{equation*}
(-1)^{i_n + 1}\left(g - i_n + \frac{1}{2}\right) + (2g + 1)(-1)^{i_n}\left(\frac{1}{2}\right) = i_n(-1)^{i_n} \not= 0
\end{equation*}

\noindent then we can isolate the term $D_{(\vec{i}), 2g + 2}$, to get

\begin{align}
D_{(\vec{i}), 2g + 2} & = \frac{1}{i_n(-1)^{i_n + 1}}\left[ (-1)^{i_n + 1}\sum_{r = 1}^{i_n}\left[{g - i_n + r \choose r + 1} + \frac{1}{2}{g - i_n + r \choose r}\right]D_{(\vec{m}, i_n - r), 2g + 2} \right. \label{PHrecursion} \\
& \left. + \ \text{Cont}(\Gamma_1)\vert_{t = 1} + \sum_{\substack{2 \leq j \leq 2g - 2 \\ j \ \text{even}}}\text{Cont}(\Gamma_j)\vert_{t = 1} + \sum_{\substack{3 \leq j \leq 2g - 3 \\ j \ \text{odd}}} \text{Cont}(\Gamma_j)\vert_{t = 1} \right] \nonumber
\end{align}

\noindent The Theorem now follows from the following Lemma:

\begin{Lemma}\label{DegreesLessThan}
All Hodge integrals on the right hand side of Equation \ref{PHrecursion} involve Hodge monomials $\lambda_{\vec{v}}$ such that $|\vec{v}| < 2g - 1$.
\end{Lemma}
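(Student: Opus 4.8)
The plan is to verify the degree bound $|\vec{v}| < 2g-1$ for each group of terms appearing on the right-hand side of Equation \ref{PHrecursion} in turn, exploiting the fact that $\vec{i} = (i_1,\ldots,i_n)$ satisfies $|\vec{i}| = 2g-1$ with $i_n > 0$, so $\vec{m} = (i_1,\ldots,i_{n-1})$ has $|\vec{m}| = 2g-1-i_n$. First I would handle the explicit sum $\sum_{r=1}^{i_n}[\cdots]D_{(\vec{m},\,i_n-r),\,2g+2}$: here the Hodge monomial is $\lambda_{\vec{m}}\lambda_{i_n-r}$, of total degree $|\vec{m}| + (i_n - r) = 2g - 1 - r$, and since $r \geq 1$ this is at most $2g-2 < 2g-1$, as required. (One should note that $D_{(\vec{m},\,i_n-r),\,2g+2}$ is being used here to denote the pure Hodge integral $\int \lambda_{\vec{m}}\lambda_{i_n-r}$ over $\mbar_{0,(2g+2)t}(\bztwo)$, consistent with the redefinition in the Pure Hodge Integrals subsection.)

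Next I would treat $\text{Cont}(\Gamma_1)\vert_{t=1} = (2g+1)(-1)^{i_n}\sum_{r=1}^{i_n}\binom{g-i_n+r}{r+1}d_{(\vec{m},\,i_n-r),\,2(g-1)+2}$. These are integrals over a genus-$(g-1)$ moduli space, so they are Hodge integrals of strictly lower genus; in the bookkeeping of the recursion, "lower genus" terms are acceptable because the recursion is run by induction on $g$ (the statement of Theorem \ref{Recursion} only requires that each integral be expressible via integrals that are either lower-genus or lower-degree). Strictly, one checks the monomial $\lambda_{\vec{m}}\lambda_{i_n-r}$ has degree $2g-1-r \le 2g-2$, and $2g-2 = 2(g-1)$ which is within the dimension $2(g-1)-1+1$ range appropriate for genus $g-1$; the key point is simply that these are not pure top-degree Hodge integrals in genus $g$.

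Then I would handle the two remaining sums, $\sum_{j \text{ even}}\text{Cont}(\Gamma_j)\vert_{t=1}$ and $\sum_{j \text{ odd}}\text{Cont}(\Gamma_j)\vert_{t=1}$, which involve products $D_{(\vec{\ell}_1,\,p-r),\,2g_1+2}D_{(\vec{\ell}_2,\,q),\,2g_2+2}$ (resp. the $d$-analogue) with $g_1+g_2 = g$ or $g-1$, both $g_i > 0$, $\vec{\ell}_1+\vec{\ell}_2 = \vec{m}$, and $p+q \in \{i_n, i_n+1\}$. Each factor is an integral over a genus-$g_i$ moduli space with $g_i < g$, so again these are lower-genus integrals; moreover each factor has a Hodge monomial of degree at most $|\vec{\ell}_1| + p \le |\vec{m}| + i_n + 1 = 2g$, which when split across the two lower-genus factors gives each monomial degree $\le 2g_i - 1$ after accounting for the dimension constraint (integrals vanish otherwise). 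I would phrase the conclusion uniformly: every Hodge integral on the right-hand side is either a genus-$g$ integral with Hodge monomial of degree $\le 2g-2 < 2g-1$, or an integral of genus $g_i < g$, so running the recursion by downward induction on the pair (genus, degree of $\lambda$-monomial) terminates, with base cases supplied by the descendent integrals of Proposition \cite{JK2001}.

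The main obstacle I anticipate is purely organizational rather than conceptual: one must carefully track, across the several $\text{Cont}(\Gamma_j)$ expressions, that the inner binomial-coefficient sums (the $\sum_{r=0}^{p}\binom{g_1-p+r}{r}\lambda_{p-r}$ factors coming from the $\mathbb{E}^\vee \otimes L$ twisting) never reintroduce a top-degree monomial — but this follows because those sums only lower the $\lambda$-index (replacing $\lambda_p$ by $\lambda_{p-r}$ with $r \ge 0$), so they can only decrease total degree. A secondary subtlety is the notational clash in $D_{(\vec{m},\,i_n-r),\,2g+2}$ between the descendent-Hodge meaning and the pure-Hodge meaning; I would add a sentence clarifying that in Equation \ref{PHrecursion} this symbol denotes the pure Hodge integral, which is forced since $|\vec{m}| + (i_n-r) < 2g-1$ leaves no room for a $\psi$-class of the required codimension. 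Once these conventions are pinned down, the verification is a finite check over the listed families of terms.
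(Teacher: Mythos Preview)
Your proof is correct and follows essentially the same route as the paper: direct degree count $|\vec m|+(i_n-r)=2g-1-r<2g-1$ for the first sum and for $\text{Cont}(\Gamma_1)$, and the dimension constraint $\dim\mbar_{0,(2g_i+2)t}(\bztwo)=2g_i-1\le 2g-3$ to force $|\vec v_i|<2g-1$ in the product terms for even and odd $j$.

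One inaccuracy worth correcting: your parenthetical remark that $D_{(\vec m,\,i_n-r),\,2g+2}$ ``denotes the pure Hodge integral $\int\lambda_{\vec m}\lambda_{i_n-r}$'' is wrong. By the standing definition $D_{(\vec v),2g+2}=\int\psi_j^{\,2g-1-|\vec v|}\lambda_{\vec v}$, so here it equals $\int\psi_j^{\,r}\lambda_{\vec m}\lambda_{i_n-r}$ with $r\ge 1$; the ``redefinition'' in the Pure Hodge Integrals subsection is only the special case $|\vec v|=2g-1$ and agrees with the general one. This slip is irrelevant to the lemma (which concerns only the degree of $\lambda_{\vec v}$), but your closing paragraph about a ``notational clash'' should be dropped. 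Likewise, the digressions about ``lower genus being acceptable for the recursion'' belong to the proof of Theorem~\ref{Recursion}, not to this lemma, which asserts purely the degree bound.
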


\begin{proof}
This is simply a consequence of meticulously analyzing all of the terms on the right hand side of Equation \ref{PHrecursion}, which we follow through with below. 

The first summation of Hodge integrals on the right hand side of Equation \ref{PHrecursion} involves integrals of the form $D_{(\vec{m}, i_n - r), 2g + 2}$, where $1 \leq r  \leq i_n$. Therefore, $|(\vec{m}, i_n - r)| = 2g - 1 - r < 2g - 1$. The same argument holds for all Hodge integrals appearing in $\text{Cont}(\Gamma_1)\vert_{t = 1}$. \\

\noindent Now consider the Hodge integrals appearing in 

\begin{equation*}
\sum_{\substack{2 \leq j \leq 2g - 2 \\ j \ \text{even}}} \text{Cont}(\Gamma_j)\vert_{t = 1}
\end{equation*}

\noindent Every term appearing in this sum is a product of Hodge integrals $D_{(\vec{v}_1), 2g_1 + 2}D_{(\vec{v}_2), 2g_2 + 2}$ where $g_1 + g_2 = g$ and $g_i > 0$. In particular, this means $g_i \leq g - 1$ Therefore, the Hodge integrals that occur are over moduli spaces of the form $\mbar_{0, (2g_i + 2)t}(\bztwo)$, but since $\text{dim}(\mbar_{0, (2g_i + 2)t}(\bztwo) = 2g_i - 1 \leq 2(g - 1) - 1 = 2g - 3$. this implies that the products of Hodge integrals appearing in this sum are non-zero if and only if the vectors $\vec{v}_i$ have the property that $|\vec{v}_i| \leq 2g - 3 < 2g - 1$, as desired. A completely analogous argument works for the Hodge integrals appearing in $\displaystyle \sum_{\substack{3 \leq j \leq 2g - 3 \\ j \ \text{odd}}}\text{Cont}(\Gamma_j)\vert_{t = 1}$. 

\end{proof}

\noindent With Lemma \ref{DegreesLessThan} established, this concludes the proof.

\end{proof}


\begin{proof}[Proof of Theorem \ref{Integrality}]

This proof requires simultaneous induction on $|\vec{i}|$ and $g$. However, since the recursions in Theorem \ref{NPHrecursion} are a two-step recursion, and Equation \ref{PHrecursion} involves many terms, our argument has many moving parts. To remedy this, let us summarize the argument:

\begin{enumerate}
\item{We induct on $|\vec{i}|$}
\item{There are two cases to consider, either $|\vec{i}|$ is odd, or $|\vec{i}|$ is even.}
\item{In the case that $|\vec{i}|$ is odd, this means $D_{(\vec{i}), 2g_0 + 2}$ is a pure Hodge integrals for some $g_0$. Using Equation \ref{PHrecursion}, we confirm the result holds in this base case, and then induct on $g$. Then we use the recursion obtained in Theorem \ref{NPHrecursion} to confirm that the result holds for the base case $d_{(\vec{i}), 2g_0 + 2}$, and then induct on $g$} 
\item{In the case that $|\vec{i}|$ is even, this means $D_{(\vec{i}), 2g + 2}$ is not a pure Hodge integral for any $g$, so we can solely refer to the recursions in Theorem \ref{NPHrecursion}. As in Step 3, we verify the result for base cases i.e. the smallest genus $g_0$ for which $D_{(\vec{i}), 2g_0+ 2}$ and $d_{(\vec{i}), 2g_0 + 2}$ are non-zero, and then induct on $g$}
\end{enumerate} 

\noindent Fix $\vec{i}$, and suppose that the integrality result holds for all vectors $\vec{v}$ where $|\vec{v}| < |\vec{i}|$.  \\

\noindent {\bf{Case 1}}: $|\vec{i}|$ is odd. \\

\noindent Since $|\vec{i}| = i_1 + i_2 + \ldots i_n$ is odd, this means there exists a $g_0$ such that $D_{(\vec{i}), 2g_0 + 2}$ is a pure Hodge integral. By definition, $g_0$ is the smallest genus $g$ for which $D_{(\vec{i}), 2g + 2}$ is non-zero. Similarly, $g_0$ is the smallest genus $g$ for which $d_{(\vec{i}), 2g + 2}$ is non-zero. Using Equation \ref{PHrecursion}, and multiplying through by $2^{|\vec{i}| + 1}i_n(-1)^{i_n + 1}$, we have  

\begin{align}
2^{|\vec{i}| + 1}i_n(-1)^{i_n + 1}D_{(\vec{i}), 2g_0 + 2} & = (-1)^{i_n + 1}2^{|\vec{i}| + 1}\sum_{r = 1}^{i_n}\left[{g_0 - i_n + r \choose r + 1} + \frac{1}{2}{g_0 - i_n + r \choose r}\right]D_{(\vec{m}, i_n - r), 2g_0 + 2} \label{normalized} \\
& + 2^{|\vec{i}| + 1}\text{Cont}(\Gamma_1)\vert_{t = 1} + 2^{|\vec{i}| + 1}\sum_{\substack{2 \leq j \leq 2g_0 - 2 \\ j \ \text{even}}}\text{Cont}(\Gamma_j)\vert_{t = 1} \nonumber \\
& + 2^{|\vec{i}| + 1}\sum_{\substack{3 \leq j \leq 2g_0 - 3 \\ j \ \text{odd}}}\text{Cont}(\Gamma_j)\vert_{t = 1} \nonumber
\end{align}

\noindent Consider the first term on the right hand side of Equation \ref{normalized}. It simplifies to 

\begin{align*}
(-1)^{i_n + 1} & \left[ 2^{|\vec{i}| + 1}\sum_{r = 1}^{i_n}{g_0 - i_n + r \choose r + 1}D_{(\vec{m}, i_n - r), 2g_0 + 2} + 2^{|\vec{i}|}\sum_{r = 1}^{i_n}{g_0 - i_n + r \choose r}D_{(\vec{m}, i_n - r), 2g_0 + 2} \right] \\
& = (-1)^{i_n + 1}\left[\sum_{r = 1}^{i_n}{g_0 - i_n + r \choose r + 1}2^r2^{|\vec{m}| + i_n - r + 1}D_{(\vec{m}, i_n - r), 2g_0 + 2} \right. \\
& \left. + \sum_{r = 1}^{i_n}{g_0 - i_n + r \choose r} 2^{r-1}2^{|\vec{m}| + i_n - r + 1}D_{(\vec{m}, i_n - r), 2g_0 + 2} \right]
\end{align*}

\noindent which is a sum of integers by Lemma \ref{DegreesLessThan} and the induction hypothesis, and therefore, an integer.  

Now consider the term $\displaystyle 2^{|\vec{i}| + 1}\text{Cont}(\Gamma_1)\vert_{t = 1}$. We have

\begin{align*}
2^{|\vec{i}| + 1}\text{Cont}(\Gamma_1)\vert_{t = 1} = (2g_0 + 1)(-1)^{i_n}\left[\sum_{r = 1}^{i_n}{g_0 - i_n + r \choose r + 1}2^r2^{|\vec{m}| + i_n - r + 1}d_{(\vec{m}, i_n - r), 2(g_0 - 1) + 2}\right]
\end{align*}

\noindent which is a sum of integers by Lemma \ref{DegreesLessThan} and the induction hypothesis, and therefore, an integer. \\

\noindent The term $\displaystyle 2^{|\vec{i}| + 1} \sum_{\substack{2 \leq j \leq 2g_0 - 2 \\ j \ \text{even}}}\text{Cont}(\Gamma_j)\vert_{t = 1}$ is equal to

\begin{align}
& (-1)^{i_n}\sum_{\substack{g_1 + g_2 = g, \ g_i > 0 \\ \vec{\ell}_1 + \vec{\ell}_2 = \vec{m} \\ p + q = i_n + 1 \\ 0 \leq r \leq p}} (-1)^{2g_2 - 1- q - |\vec{\ell}_2|}{2g + 1 \choose 2g_2}{g_1 - p + r \choose r} \label{secondterm} \\
& \times 2^{|\vec{i}| - |\vec{\ell}_1| - p + r - |\vec{\ell}_2| - q}2^{|\vec{\ell}_1| + p - r + 1}D_{(\vec{\ell}_1, p - r), 2g_1 + 2}2^{|\vec{\ell}_2| + q + 1}D_{(\vec{\ell}_2, q), 2g_2 + 2}  \nonumber \\
& + (-1)^{i_n}\sum_{\substack{g_1 + g_2 = g, \ g_i > 0 \\ \vec{\ell}_1 + \vec{\ell}_2 = \vec{m} \\ p + q = i_n \\ 0 \leq r \leq p}} (-1)^{2g_2 - 1- q - |\vec{\ell}_2|}{2g + 1 \choose 2g_2}{g_1 - p + r \choose r} \nonumber \\
& \times 2^{|\vec{i}| - |\vec{\ell}_1| - p + r - |\vec{\ell}_2| - q - 1}2^{|\vec{\ell}_1| + p - r + 1}D_{(\vec{\ell}_1, p - r), 2g_1 + 2}2^{|\vec{\ell}_2| + q + 1}D_{(\vec{\ell}_2, q), 2g_2 + 2} \nonumber
\end{align}

\noindent The integrality of the first summation in Equation \ref{secondterm} follows if $|\vec{i}| - |\vec{\ell}_1| - p + r - |\vec{\ell}_2| - q \geq 0$. Indeed,

\begin{align*}
|\vec{i}| - |\vec{\ell}_1| - p + r - |\vec{\ell}_2| - q  & = |\vec{m}| + i_n - (|\vec{\ell}_1| + |\vec{\ell}_2|) - (p + q) + r \\
& = |\vec{m}| - |\vec{m}| + i_n - (i_n + 1) + r  \\
& = r - 1
\end{align*}

\noindent By Lemma \ref{DegreesLessThan}, we have

\begin{align*}
& (|\vec{\ell}_1| + p - r) + (|\vec{\ell}_2| + q) < 2|\vec{i}| \\
\implies & |\vec{m}| + (p + q) - r < 2|\vec{i}| \\
\implies & |\vec{m}| + i_n + 1 - r < 2(|\vec{m}| + i_n) = 2|\vec{m}| + 2i_n\\
\implies & 1 - r < |\vec{m}| + i_n \\
\implies & r - 1 > |\vec{i}| \geq 0
\end{align*}

\noindent as desired. The integrality of the second summation in Equation \ref{secondterm} follows if $|\vec{i}| - |\vec{\ell}_1| - p + r - |\vec{\ell}_2| - q -1 \geq 0$. Indeed,

\begin{align*}
|\vec{i}| - |\vec{\ell}_1| - p + r - |\vec{\ell}_2| - q -1 & = |\vec{m}| + i_n - (|\vec{\ell}_1| + |\vec{\ell}_2|) - (p + q) + (r - 1) \\
& = |\vec{m}| - |\vec{m}| + i_n - i_n + r - 1 \\
& = r - 1 \\
& \geq 0
\end{align*}

\noindent as desired. 

When confirming the integrality of the term 

\begin{equation*}
2^{|\vec{i}| + 1}\sum_{\substack{3 \leq j \leq 2g_0 - 3 \\ j \ \text{odd}}}\text{Cont}(\Gamma_j)\vert_{t = 1}
\end{equation*}

\noindent one uses similar/analagous calculations as the ones used in the analysis of Equation \ref{secondterm}.


\noindent Therefore, we conclude that $2^{|\vec{i}| + 1}D_{(\vec{i}), 2g_0 + 2}$ is an integer. The remaining base case is the integrality of $d_{(\vec{i}), 2g_0 + 2}$.  Using Theorem \ref{NPHrecursion}, specializing to the parameter $k = 0$, and multiplying through by $2^{|\vec{i}| + 1}$, we obtain 

\begin{align}
2^{|\vec{i}| + 1}d_{(\vec{i}), 2g_0 + 2} & = \sum_{\substack{g_1 + g_2 = g_0 \\ 0 \leq g_2 \leq g - 1 \\ \vec{\ell} \leq \vec{i}}}(-1)^{|\vec{\ell}|}{2g_0 \choose 2g_2 + 1}2^{|\vec{i}| - |\vec{\ell}|  + 1}D_{(\vec{i} - \vec{\ell}), 2g_1 + 2}(2)^{|\vec{\ell}| + 1}D_{(\vec{\ell}), 2g_2 + 2} \label{basecaseuntwisted} \\
& + \sum_{\substack{g_1 + g_2 = g_0 - 1 \\ 0 \leq g_2 \leq g - 1 \\ \vec{\ell} \leq \vec{i}}}(-1)^{|\vec{\ell}|}{2g_0 \choose 2g_2 + 2}(2)^{|\vec{i}| - |\vec{\ell}| + 1}(2)^{|\vec{\ell}| + 1}d_{(\vec{i} - \vec{\ell}), 2g_1 + 2}d_{(\vec{\ell}), 2g_2 + 2} \nonumber
\end{align}

The summations on the right hand side of Equation \ref{basecaseuntwisted} are all integers by the induction hypothesis, except for the cases when $\vec{\ell} = \vec{0}$ and $\vec{\ell} = \vec{i}$. However, the integrals corresponding to these terms are zero for dimension reasons. Therefore, we conclude that $d_{(\vec{i}), 2g_0 + 2}$ is an integer. 

With the base cases established, we now induct on $g$. Suppose that the integrality results holds for $D_{(\vec{i}), 2g + 2}$ and $d_{(\vec{i}), 2g + 2}$ for all $g < \widetilde{g}$, where $\widetilde{g} > g_0$. Notice that $D_{(\vec{i}), 2\widetilde{g} + 2}$ is not a pure-Hodge integral, which means we can use Theorem \ref{NPHrecursion} to calculate it. Specializing to the parameter $k = 0$ in Theorem \ref{NPHrecursion}, we have

\begin{align}
2^{|\vec{i}| + 1}D_{\vec{i}, 2\widetilde{g} + 2} & = \sum_{\substack{g_1 + g_2 = \widetilde{g} - 1 \\ 0 \leq g_2 \leq \widetilde{g} - 1 \\ \vec{\ell} \leq \vec{i}}}(-1)^{|\vec{\ell}|}{2\widetilde{g} - 1 \choose 2g_2 + 1}(2)^{|\vec{i}| - |\vec{\ell}| + 1}d_{(\vec{i} - \vec{\ell}), 2g_1 + 2}(2)^{|\vec{\ell}| + 1}d_{(\vec{\ell}), 2g_2 + 2} \label{inductong} \\
& - \sum_{\substack{g_1 + g_2 = \widetilde{g} \\ 1 \leq g_2 \leq \widetilde{g} - 1 \\ \vec{\ell} \leq \vec{i}}}(-1)^{|\vec{\ell}|}{2\widetilde{g} - 1 \choose 2g_2}(2)^{|\vec{i}| - |\vec{\ell}| + 1}D_{(\vec{i} - \vec{\ell}), 2g_1 + 2}(2)^{|\vec{\ell}| + 1}D_{\vec{\ell}, 2g_2 + 2} \nonumber
\end{align}

\begin{align}
2^{|\vec{i}| + 1}d_{\vec{i}, 2\widetilde{g} + 2} & = \sum_{\substack{g_1 + g_2 = \widetilde{g} \\ 0 \leq g_2 \leq \widetilde{g} - 1 \\ \vec{\ell} \leq \vec{i}}} (-1)^{|\vec{\ell}|}{2\widetilde{g} \choose 2g_2 + 1}(2)^{|\vec{i}| - |\vec{\ell}| + 1}D_{(\vec{i} - \vec{\ell}), 2g_1 + 2}(2)^{|\vec{\ell}| + 1}D_{\vec{\ell}, 2g_2 + 2} \label{inductong2}\\
& \sum_{\substack{g_1 + g_2 = \widetilde{g} - 1 \\ 0 \leq g_2 \leq \widetilde{g} - 1 \\ \vec{\ell} \leq \vec{i}}} (-1)^{|\vec{\ell}|}{2\widetilde{g} \choose 2g_2 + 2}(2)^{|\vec{i}| - |\vec{\ell}| + 1}d_{(\vec{i} - \vec{\ell}), 2g_1 + 2}(2)^{|\vec{\ell}| + 1}d_{\vec{\ell}, 2g_2 + 2} \nonumber
\end{align}

\noindent The right hand sides of both equations above are all integers, either by the induction hypothesis on $|\vec{i}|$, or the induction hypothesis on $g < \widetilde{g}$. \\

\noindent This concludes the case when $|\vec{i}|$ is odd. \\

\noindent {\bf{Case 2}}: $|\vec{i}|$ is even. \\

\noindent As before, define $g_0$ as the smallest genus $g$ for which $D_{(\vec{i}), 2g + 2}$ and $d_{(\vec{i}), 2g + 2}$ are non-zero. Since $|\vec{i}|$ is even, this means $D_{(\vec{i}), 2g + 2}$ is not a pure Hodge integral for all $g$. Therefore, we can solely refer to Theorem \ref{NPHrecursion} for its computation. 

We again specialize to the parameter $k = 0$ in Theorem \ref{NPHrecursion}. To compute $(2)^{|\vec{i}| + 1}D_{(\vec{i}), 2g_0 + 2}$, we have

\begin{align*}
2^{|\vec{i}| + 1}D_{\vec{i}, 2g_0 + 2} & = \sum_{\substack{g_1 + g_2 = g_0 - 1 \\ 0 \leq g_2 \leq g_0 - 1 \\ \vec{\ell} \leq \vec{i}}}(-1)^{|\vec{\ell}|}{2g_0 - 1 \choose 2g_2 + 1}(2)^{|\vec{i}| - |\vec{\ell}| + 1}d_{(\vec{i} - \vec{\ell}), 2g_1 + 2}(2)^{|\vec{\ell}| + 1}d_{(\vec{\ell}), 2g_2 + 2} \\
& - \sum_{\substack{g_1 + g_2 = g_0 \\ 1 \leq g_2 \leq g_0 - 1 \\ \vec{\ell} \leq \vec{i}}}(-1)^{|\vec{\ell}|}{2g_0 - 1 \choose 2g_2}(2)^{|\vec{i}| - |\vec{\ell}| + 1}D_{(\vec{i} - \vec{\ell}), 2g_1 + 2}(2)^{|\vec{\ell}| + 1}D_{\vec{\ell}, 2g_2 + 2}
\end{align*}

\noindent The sums above involve summation over vectors $\vec{\ell}$ such that $\vec{\ell} \leq \vec{i}$. However, we can refine the inequality, and turn the above into sums over $\vec{\ell}$ such that $\vec{0} < \vec{\ell} < \vec{i}$. This follows for dimension reasons: when $\vec{\ell} = \vec{0}$ or $\vec{\ell} = \vec{i}$, the resulting terms vanish for dimension reasons. Since the summation is refined to $\vec{0} < \vec{\ell} < \vec{i}$, the integrality result holds for $D_{\vec{i}, 2g_0 + 2}$ by the induction hypothesis on $|\vec{i}|$. An analogous argument goes through when we compute $2^{|\vec{i}| + 1}d_{(\vec{i}), 2g_0 + 2}$. This establishes the base cases. When we induct on $g$, the same argument/computations used in the analysis of Equation \ref{inductong} and Equation \ref{inductong2} works.  \\

\noindent This concludes the proof of Theorem \ref{Integrality}

\end{proof}

\noindent Before we begin the proof of Theorem \ref{Polynomiality}, we establish a few useful lemmas along the way. Using the theory of integer valued polynomials (\cite{IVP}), we have the following lemma:

\begin{Lemma}\label{integervalued}
Suppose $a_g$ is an integer valued polynomial in $g$ of degree at most $i$. Then there exists integers $c_0, \ldots c_i$ such that

\begin{equation*}
a_g = \sum_{k = 0}^ic_k{g \choose k}
\end{equation*}

\flushright{$\square$}

\end{Lemma}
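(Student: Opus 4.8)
The plan is to use the classical finite-difference characterization of integer-valued polynomials, which is exactly the content of \cite{IVP}. First I would note that since $\binom{g}{k}$ is a polynomial in $g$ of degree exactly $k$ (with leading coefficient $1/k!$), the set $\left\{\binom{g}{0}, \binom{g}{1}, \ldots, \binom{g}{i}\right\}$ is a $\mathbb{Q}$-basis for the $(i+1)$-dimensional vector space of polynomials in $g$ of degree at most $i$. Hence there are \emph{unique} rational numbers $c_0, \ldots, c_i$ with
\begin{equation*}
a_g = \sum_{k = 0}^{i} c_k \binom{g}{k},
\end{equation*}
and the only real task is to show each $c_k$ is an integer.

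To do this I would introduce the forward difference operator $\Delta$, acting on functions of $g$ by $(\Delta f)(g) := f(g + 1) - f(g)$, and use the Pascal identity in the form $\Delta \binom{g}{k} = \binom{g}{k} \big|_{g \mapsto g+1} - \binom{g}{k} = \binom{g}{k - 1}$ for $k \geq 1$, together with $\Delta \binom{g}{0} = 0$. Applying $\Delta$ repeatedly to the expansion of $a_g$ gives, for each $0 \leq j \leq i$,
\begin{equation*}
(\Delta^{j} a)(g) = \sum_{k = j}^{i} c_k \binom{g}{k - j}.
\end{equation*}
Evaluating at $g = 0$ and using $\binom{0}{m} = 0$ for $m \geq 1$ while $\binom{0}{0} = 1$, every term with $k > j$ drops out, leaving $(\Delta^{j} a)(0) = c_j$.

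Finally, since $a_g$ takes integer values at nonnegative integers, so does $\Delta a$ (a difference of two integers), and inductively so does $\Delta^{j} a$ for every $j$; in particular $c_j = (\Delta^{j} a)(0) \in \mathbb{Z}$. This holds for all $0 \leq j \leq i$, which is the claim. (Concretely, one may also record the closed form $c_j = (\Delta^j a)(0) = \sum_{m = 0}^{j} (-1)^{j - m} \binom{j}{m} a_m$, which makes visible that only the integer values $a_0, a_1, \ldots, a_j$ are needed.) There is no genuine obstacle here: the entire argument is the standard Newton forward-difference interpolation, and the only point requiring the slightest care is the bookkeeping that $\Delta$ lowers the degree so that the expansion terminates at $k = i$; this is immediate from the degree count in the first paragraph.
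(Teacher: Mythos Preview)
Your argument is correct and is the standard Newton forward-difference proof of this classical fact. The paper itself does not prove the lemma at all: it simply states the result, appends a $\square$, and defers to the reference \cite{IVP} on integer-valued polynomials for justification. So there is nothing to compare your proof against---you have supplied exactly the kind of self-contained argument the paper omits, and your explicit formula $c_j = (\Delta^j a)(0) = \sum_{m=0}^{j}(-1)^{j-m}\binom{j}{m}a_m$ is in fact the recursion the paper later uses in Corollary~\ref{binomialbasis} to compute these coefficients in practice.
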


\begin{Lemma}\label{binexpfunc}
The exponential generating function of the binomial coefficient ${g \choose k}$ is 

\begin{equation*}
f(t) := \sum_{g \geq k}{g \choose k}\frac{t^g}{g!} = \frac{t^k}{k!}e^t
\end{equation*}

\end{Lemma}

\begin{proof}
This follows from direct computation:

\begin{align*}
\frac{t^k}{k!}e^t & = \frac{t^k}{k!}\sum_{g\geq 0}\frac{t^g}{g!} \\
& = \sum_{g \geq 0}\frac{t^{g + k}}{k!g!} \\
& = \sum_{g \geq k}\frac{t^g}{k!(g - k)!} \\
& = \sum_{g \geq k}\frac{g!}{g!}\frac{t^g}{k!(g - k)!} \\
& = \sum_{g \geq k} {g \choose k}\frac{t^g}{g!}
\end{align*}
\end{proof}

\begin{Lemma}\label{CombinatorialIdentity}
Let $0 \leq k \leq n$. We have the following combinatorial identity,

\begin{equation*}
\sum_{\ell + m = 2(n - k)}(-1)^\ell{n \choose \ell}{n \choose m} = (-1)^k{n \choose k}
\end{equation*}

\end{Lemma}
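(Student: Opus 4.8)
The plan is to prove the identity
\[
\sum_{\ell + m = 2(n-k)} (-1)^\ell \binom{n}{\ell}\binom{n}{m} = (-1)^k \binom{n}{k}
\]
by a generating-function argument. First I would recognize the left-hand side as the coefficient of $x^{2(n-k)}$ in the product $\left(\sum_{\ell} (-1)^\ell \binom{n}{\ell} x^\ell\right)\left(\sum_{m} \binom{n}{m} x^m\right) = (1-x)^n (1+x)^n = (1-x^2)^n$. Then by the binomial theorem $(1-x^2)^n = \sum_{j=0}^n (-1)^j \binom{n}{j} x^{2j}$, so the coefficient of $x^{2(n-k)}$ is obtained by setting $j = n-k$, giving $(-1)^{n-k}\binom{n}{n-k} = (-1)^{n-k}\binom{n}{k}$.

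There is a parity subtlety here: the stated right-hand side is $(-1)^k \binom{n}{k}$, whereas the naive computation yields $(-1)^{n-k}\binom{n}{k}$. These agree precisely when $n$ is even (since then $(-1)^{n-k} = (-1)^{-k} = (-1)^k$). The context in which this lemma is invoked is the polynomiality argument, where $n$ plays the role of a quantity like $2g - 1 - k$ or similar even-adjusted indices; I would double-check the exact application, but I expect that in every use $n$ is even (or that an accompanying sign is absorbed elsewhere), so the identity as stated holds in the relevant range. If full generality is wanted, I would simply state the identity as $\sum_{\ell+m = 2(n-k)} (-1)^\ell \binom{n}{\ell}\binom{n}{m} = (-1)^{n-k}\binom{n}{k}$ and note the specialization.

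Concretely, the steps in order are: (1) write $A(x) = \sum_{\ell=0}^n (-1)^\ell \binom{n}{\ell} x^\ell = (1-x)^n$ and $B(x) = \sum_{m=0}^n \binom{n}{m} x^m = (1+x)^n$; (2) observe the left-hand sum is $[x^{2(n-k)}] A(x)B(x)$, being careful that the convolution indices $\ell, m$ each range over $0,\dots,n$ so no boundary terms are lost (this is automatic since $\binom{n}{\ell} = 0$ outside that range); (3) simplify $A(x)B(x) = (1-x^2)^n$; (4) extract $[x^{2(n-k)}](1-x^2)^n = (-1)^{n-k}\binom{n}{n-k}$; (5) use $\binom{n}{n-k} = \binom{n}{k}$ and the parity relation under the standing hypothesis on $n$ to conclude.

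The only genuine obstacle is the sign/parity bookkeeping described above — the generating-function identity itself is routine. I would make sure to state explicitly which parity of $n$ is in force wherever Lemma \ref{CombinatorialIdentity} is applied, so that the $(-1)^k$ versus $(-1)^{n-k}$ discrepancy is transparently resolved rather than silently assumed.
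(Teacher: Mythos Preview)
Your approach is essentially identical to the paper's: both arguments recognize the left-hand side as a coefficient in the product of two binomial expansions, collapse the product to a single binomial in $x^2$, and then read off the desired coefficient. The only difference is cosmetic: the paper writes the product as $(x-1)^n(x+1)^n=(x^2-1)^n$, whereas you write it as $(1-x)^n(1+x)^n=(1-x^2)^n$.

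Your sign observation is correct, and in fact it exposes a small slip in the paper. The convolution $\sum_{\ell+m=2(n-k)}(-1)^\ell\binom{n}{\ell}\binom{n}{m}$ is literally $[x^{2(n-k)}](1-x)^n(1+x)^n$, as you write; the paper's first line replaces $(1-x)^n$ by $(x-1)^n$, introducing a stray factor of $(-1)^n$. Thus the identity that actually holds in general is
\[
\sum_{\ell+m=2(n-k)}(-1)^\ell\binom{n}{\ell}\binom{n}{m}=(-1)^{n-k}\binom{n}{k},
\]
exactly as you computed, and the lemma as stated is valid only for even $n$. Your instinct to check the application is right, but your guess that $n$ is always even there is not: the lemma is invoked with $n=|\vec{i}|+1$, which can be of either parity. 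What saves the paper is a compensating slip in the very next step of the application, where $\sum_{k}(-1)^k\binom{|\vec{i}|+1}{k}s^k$ is identified with $(s-1)^{|\vec{i}|+1}$ rather than $(1-s)^{|\vec{i}|+1}$; the two sign errors cancel, and the final result $A_{\vec{i},1}A_{\vec{i},3}-A_{\vec{i},2}A_{\vec{i},4}=(s-1)^{|\vec{i}|+1}$ is correct. If you carry your corrected version $(-1)^{n-k}\binom{n}{k}$ through, you obtain $(s-1)^{|\vec{i}|+1}$ directly without any cancellation.
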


\begin{proof}
We use the notation $[x^i]p(x)$ to mean the degree $i$ coefficient of the polynomial $p(x)$. The Lemma follows from direct computation, as shown below,

\begin{align*}
\sum_{\ell + m = 2(n - k)}(-1)^\ell{n \choose \ell}{n \choose m} & = [x^{2(n-k)}](x - 1)^n (x + 1)^n \\
& = [x^{2(n - k)}](x^2 - 1)^n \\
& = [x^{2(n - k)}]\sum_{\ell = 0}^n(-1)^{n - \ell}{n \choose \ell}(x^2)^\ell \\
& = [x^{2(n - k)}]\sum_{\ell = 0}^n(-1)^{n - \ell}{n \choose \ell}(x^{2\ell}) \\
& = (-1)^{n - (n - k)}{n \choose n - k} \\
& = (-1)^k{n \choose k}
\end{align*}
\end{proof}

\begin{Lemma}\label{PolynomialityExponential}
Let $a_g$ be an integer valued sequence, and let $f(t) = \sum_{g \geq 0}a_g\frac{t^g}{g!}$ be its exponential generating function. Then $a_g$ is a polynomial in $g$ of degree at most $i$ if and only if there exists a polynomial $p(t)$ of degree at most $i$ such that $f(t) = p(t)e^t$.
\end{Lemma}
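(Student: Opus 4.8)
The plan is to prove both directions by reducing to the binomial basis established in Lemmas \ref{integervalued} and \ref{binexpfunc}. For the forward direction, suppose $a_g$ is a polynomial in $g$ of degree at most $i$. Since $a_g$ is also integer valued, Lemma \ref{integervalued} gives integers $c_0, \ldots, c_i$ with $a_g = \sum_{k=0}^i c_k \binom{g}{k}$. I would then compute the exponential generating function term by term, using linearity and Lemma \ref{binexpfunc}:
\begin{equation*}
f(t) = \sum_{g \geq 0} a_g \frac{t^g}{g!} = \sum_{k=0}^i c_k \sum_{g \geq k} \binom{g}{k}\frac{t^g}{g!} = \sum_{k=0}^i c_k \frac{t^k}{k!} e^t = \left(\sum_{k=0}^i \frac{c_k}{k!} t^k\right) e^t,
\end{equation*}
so $p(t) := \sum_{k=0}^i \frac{c_k}{k!} t^k$ is a polynomial of degree at most $i$ with $f(t) = p(t) e^t$, as desired. (One should note the sum over $g \geq 0$ versus $g \geq k$ discrepancy is harmless since $\binom{g}{k} = 0$ for $g < k$.)

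For the reverse direction, suppose $f(t) = p(t) e^t$ for some polynomial $p(t)$ of degree at most $i$. Write $p(t) = \sum_{k=0}^i b_k t^k$. Then reversing the computation above, $f(t) = \sum_{k=0}^i b_k t^k e^t = \sum_{k=0}^i b_k k! \cdot \frac{t^k}{k!} e^t = \sum_{k=0}^i b_k k! \sum_{g \geq k} \binom{g}{k}\frac{t^g}{g!}$. Extracting the coefficient of $\frac{t^g}{g!}$ on both sides yields $a_g = \sum_{k=0}^i b_k k! \binom{g}{k}$, which is manifestly a polynomial in $g$ of degree at most $i$ (each $\binom{g}{k}$ is a polynomial in $g$ of degree exactly $k \leq i$). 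This completes the equivalence.

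The only mild subtlety — and the closest thing to an obstacle — is bookkeeping about the hypothesis that $a_g$ is \emph{integer valued}: it is genuinely needed for the forward direction in order to invoke Lemma \ref{integervalued} and obtain the clean binomial-basis expansion, but it plays no role in the reverse direction (where one directly reads off $a_g$ as a polynomial). I would be careful to state that the rational coefficients $c_k/k!$ of $p(t)$ need not be integers even though the $c_k$ are, so no integrality claim about $p$ is asserted. Everything else is a routine manipulation of exponential generating functions, so no significant difficulty is expected.
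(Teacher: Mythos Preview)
Your proposal is correct and follows essentially the same approach as the paper: both directions reduce to the binomial basis via Lemmas \ref{integervalued} and \ref{binexpfunc}, arriving at $a_g = \sum_k b_k k!\binom{g}{k}$ in the reverse direction. The only cosmetic difference is that the paper expands the product $p(t)e^t$ by an explicit index shift rather than citing Lemma \ref{binexpfunc} again, but the content is identical.
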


\begin{proof}

\noindent We first prove the forward direction. Suppose that $a_g$ is a polynomial in $g$ of degree at most $i$. By Lemma \ref{integervalued}, there exists constant $c_0, \ldots c_i$ such that

\begin{equation*}
a_g = \sum_{k = 0}^ic_k{g \choose k}
\end{equation*}

\noindent and therefore, the exponential generating function of $a_g$ is 

\begin{align*}
f(t) & := \sum_{g \geq 0}a_g\frac{t^g}{g!} \\
& = \sum_{g \geq 0}\left(\sum_{k = 0}^ic_k{g \choose k}\right)\frac{t^g}{g!} \\
& = \sum_{k = 0}^ic_k\left(\sum_{g \geq 0}{g \choose k}\frac{t^g}{g!}\right) \\
(\text{by Lemma \ref{binexpfunc}}) \hspace{1cm} & = \sum_{k = 0}^ic_k\frac{t^k}{k!}e^t \\
& = \left( \sum_{k = 0}^ic_k\frac{t^k}{k!} \right)e^t
\end{align*}

\noindent Since $\displaystyle \sum_{k = 0}^ic_k\frac{t^k}{k!}$ is a polynomial of degree at most $i$, this proves the forward direction. \\

Now we prove the converse. Suppose there exists a polynomial $p(t)$ of degree at most $i$ such that $\displaystyle f(t) := \sum_{g \geq 0}a_g\frac{t^g}{g!} = p(t)e^t$. If we express $p(t)$ as $p(t) = \displaystyle \sum_{n = 0}^ib_nt^n$, then

\begin{align*}
f(t) & = p(t)e^t = \left(\sum_{n = 0}^ib_nt^n\right)\left(\sum_{g \geq 0}\frac{t^g}{g!}\right) = \sum_{n = 0}^i\left( \sum_{g \geq 0}b_n\frac{t^{g + n}}{g!}\right) \\
& = \sum_{n = 0}^i\left(\sum_{g \geq 0}b_n(g + n)(g + (n - 1))\ldots(g + 1)\frac{t^{g + n}}{(g + n)!}\right) \\
& = \sum_{n = 0}^i\left( \sum_{g \geq n}b_n (g)(g - 1)\ldots(g - (n - 1))\frac{t^g}{g!} \right) = \sum_{n = 0}^i\left(\sum_{g \geq n}b_nn!{g \choose n}\frac{t^g}{g!}\right) \\
\left(\text{since} \ {g \choose n} = 0 \ \text{for} \ g < n\right)  \ & = \sum_{n = 0}^i\left(\sum_{g \geq 0}b_nn!{g \choose n}\frac{t^g}{g!}\right) = \sum_{g \geq 0}\left(\sum_{n = 0}^ib_nn!{g \choose n}\right)\frac{t^g}{g!}
\end{align*}

\noindent Since $\displaystyle \sum_{n = 0}^ib_nn!{g \choose n}$ is a polynomial in $g$ of degree at most $i$, the Lemma follows.
\end{proof}


\begin{proof}[Proof of Theorem \ref{Polynomiality}]

Throughout, we define 

\begin{align*}
& F_{\vec{i}}(t) := \sum_{g \geq 0}D_{(\vec{i}), 2g + 2}\frac{t^g}{g!} \\
& G_{\vec{i}}(t) := \sum_{g \geq 0}d_{(\vec{i}), 2g + 2}\frac{t^g}{g!}
\end{align*}

\noindent The proof proceeds as follows:

\begin{enumerate}
\item{We consider the recursions obtained in Theorem \ref{NPHrecursion}. For the recursion in which $D_{(\vec{i}), 2g + 2}$ is the principal part, we specialize to the parameter $k = 2g - 2 - |\vec{i}|$, and for the recursion in which $d_{(\vec{i}), 2g + 2}$ is the principal part, we specialize to the parameter $k = 2g - 1 - |\vec{i}|$.}
\item{We translate the recursions into systems of ordinary differential equations for $F_{\vec{i}}$ and $G_{\vec{i}}$}
\item{We use induction on $|\vec{i}|$, and in the Laplace space, we show that $\mathcal{L}\left\{F_{\vec{i}}\right\}$ and $\mathcal{L}\left\{G_{\vec{i}}\right\}$ satisfy the desired result.}
\end{enumerate}

When specializing to the parameter $k = 2g - 2 - |\vec{i}|$ in Equation \ref{NPHtwisted}, and to the parameter $k = 2g - 1 - |\vec{i}|$ in Equation \ref{NPHuntwisted}, we have

\begin{align*}
D_{(\vec{i}), 2g + 2} & = \sum_{g_2 = 0}^{\left\lfloor \frac{|\vec{i}|}{2} \right\rfloor}{|\vec{i}| + 1 \choose 2g_2 + 1}d_{(\vec{i}), 2(g - (g_2 + 1)) + 2} - \sum_{g_2 = 1}^{\left\lfloor \frac{|\vec{i}| + 1}{2} \right\rfloor} {|\vec{i}| + 1 \choose 2g_2}D_{(\vec{i}), 2(g - g_2) + 2} \\
& + 2\sum_{g_2 = 0}^{\left\lfloor \frac{|\vec{i}|}{2} \right\rfloor}\sum_{0 < \vec{\ell} \leq \vec{i}}(-1)^{|\vec{\ell}|}{|\vec{i}| + 1 \choose 2g_2 + 1}d_{(\vec{i} - \vec{\ell}), 2(g - (g_2 + 1)) + 2}d_{(\vec{\ell}), 2g_2 + 2} \\
& - 2\sum_{g_2 = 1}^{\left\lfloor \frac{|\vec{i}| + 1}{2} \right\rfloor} \sum_{0 < \vec{\ell} \leq \vec{i}} (-1)^{|\vec{\ell}|}{|\vec{i}| + 1 \choose 2g_2}D_{(\vec{i} - \vec{\ell}), 2(g - g_2) + 2}D_{(\vec{\ell}), 2g_2 + 2}
\end{align*}

\begin{align*}
d_{(\vec{i}), 2g + 2} & = \sum_{g_2 = 0}^{\left\lfloor \frac{|\vec{i}|}{2} \right\rfloor}{|\vec{i}| + 1 \choose 2g_2 + 1}D_{(\vec{i}), 2(g - g_2) + 2} - \sum_{g_2 = 0}^{\left\lfloor \frac{|\vec{i}| + 1}{2} \right\rfloor - 1}{|\vec{i}| + 1 \choose 2g_2 + 2}d_{(\vec{i}), 2(g - (g_2 + 1))} \\
& + 2\sum_{g_2 = 0}^{\left\lfloor \frac{|\vec{i}|}{2} \right\rfloor}\sum_{0 < \vec{\ell} \leq i}(-1)^{|\vec{\ell}|}{|\vec{i}| + 1 \choose 2g_2 + 1}D_{(\vec{i} - \vec{\ell}), 2(g - g_2) + 2}D_{(\vec{\ell}), 2g_2 + 2} \\
& - 2 \sum_{g_2 = 0}^{\left\lfloor \frac{|\vec{i}| + 1}{2} \right\rfloor - 1}\sum_{0 < \vec{\ell} \leq \vec{i}}(-1)^{|\vec{\ell}|}{|\vec{i}| + 1 \choose 2g_2 + 2}d_{(\vec{i} - \vec{\ell}), 2(g - (g_2 + 1))}d_{(\vec{\ell}), 2g_2 + 2}
\end{align*}

\noindent Suppose that the polynomiality result holds for all vectors $\vec{v}$ such that $|\vec{v}| < |\vec{i}|$, for some vector $\vec{i}$. Translating the above recursions into a system of ordinary differential equations for $F_{\vec{i}}$ and $G_{\vec{i}}$, and applying Lemma \ref{PolynomialityExponential}, there exist polynomials $P_{\vec{i}}(t)$ and $Q_{\vec{i}}(t)$ of degree at most $(|\vec{i}| - 1)^2 + 1$ such that

\begin{equation*}
\partial_t^{\left\lfloor \frac{|\vec{i}|}{2} \right\rfloor + 1}F_{\vec{i}} = \sum_{g_2 = 0}^{\lfloor \frac{|\vec{i}|}{2} \rfloor} {|\vec{i}| + 1 \choose 2g_2 + 1}\partial_t^{\lfloor \frac{|\vec{i}|}{2} \rfloor - g_2}G_{\vec{i}} - \sum_{g_2 = 1}^{\lfloor \frac{|\vec{i}| + 1}{2} \rfloor}{|\vec{i}| + 1 \choose 2g_2}\partial_t^{\left\lfloor \frac{|\vec{i}|}{2}\right\rfloor + 1 - g_2}F_{\vec{i}} + P_{\vec{i}}(t)e^t
\end{equation*}

\begin{equation*}
\partial_t^{\left\lfloor \frac{|\vec{i}| + 1}{2} \right\rfloor}G_{\vec{i}} = \sum_{g_2 = 0}^{\left\lfloor \frac{|\vec{i}|}{2} \right\rfloor}{|\vec{i}| + 1 \choose 2g_2 + 1}\partial_t^{\left\lfloor \frac{|\vec{i}| + 1}{2} \right\rfloor - g_2}F_{\vec{i}} - \sum_{g_2 = 0}^{\left\lfloor \frac{|\vec{i}| + 1}{2} \right\rfloor - 1}{|\vec{i}| + 1 \choose 2g_2 + 2}\partial_t^{\left\lfloor \frac{|\vec{i}| + 1}{2} \right\rfloor - (g_2 + 1)}G_{\vec{i}} + Q_{\vec{i}}(t)e^t
\end{equation*}

\noindent Denote by $\mathcal{L}$ the Laplace transform, and define $\widetilde{F}_{\vec{i}}(s) := \mathcal{L}\{F_{\vec{i}}(t)\}(s)$, and $\widetilde{G}_{\vec{i}}(s) := \mathcal{L}\{G_{\vec{i}}(t)\}(s)$. Recall that $\mathcal{L}\{ t^ne^t\}(s) = \frac{n!}{(s - 1)^{n + 1}}$. Using this fact, when we take the Laplace transform of the above and combine like terms, we see that there exist constants $a_k, b_k$, for $0 \leq k \leq (|\vec{i}| - 1)^2 + 1$, such that 

\begin{equation*}
\left( \sum_{g_2 = 0}^{\left\lfloor \frac{|\vec{i}| + 1}{2} \right\rfloor} {|\vec{i}| + 1 \choose 2g_2} s^{\left\lfloor \frac{|\vec{i}|}{2} \right\rfloor + 1 - g_2} \right)\widetilde{F}_{\vec{i}}(s) = \left( \sum_{g_2 = 0}^{\left\lfloor \frac{|\vec{i}|}{2} \right\rfloor} {|\vec{i}| + 1 \choose 2g_2 + 1} s^{\left\lfloor \frac{|\vec{i}|}{2} \right\rfloor - g_2}\right)\widetilde{G}_{\vec{i}}(s) + \sum_{k = 0}^{(|\vec{i}| - 1)^2 + 1}\frac{a_k}{(s - 1)^{k + 1}}
\end{equation*}

\begin{equation*}
\left( \sum_{g_2 = 0}^{\left\lfloor \frac{|\vec{i}| + 1}{2} \right\rfloor} {|\vec{i}| + 1 \choose 2g_2} s^{\left\lfloor \frac{|\vec{i}| + 1}{2} \right\rfloor - g_2} \right)\widetilde{G}_{\vec{i}}(s) = \left( \sum_{g_2 = 0}^{\left\lfloor \frac{|\vec{i}|}{2} \right\rfloor} {|\vec{i}| + 1 \choose 2g_2 + 1} s^{\left\lfloor \frac{|\vec{i}| + 1}{2} \right\rfloor - g_2} \right)\widetilde{F}_{\vec{i}}(s) + \sum_{k = 0}^{(|\vec{i}| - 1)^2 + 1}\frac{b_k}{(s - 1)^{k + 1}}
\end{equation*}

\noindent To ease notation, we make the following definitions:

\begin{align*}
& A_{\vec{i}, 1} :=  \sum_{g_2 = 0}^{\left\lfloor \frac{|\vec{i}| + 1}{2} \right\rfloor} {|\vec{i}| + 1 \choose 2g_2} s^{\left\lfloor \frac{|\vec{i}|}{2} \right\rfloor + 1 - g_2} \\
& A_{\vec{i}, 2} := \sum_{g_2 = 0}^{\left\lfloor \frac{|\vec{i}|}{2} \right\rfloor} {|\vec{i}| + 1 \choose 2g_2 + 1} s^{\left\lfloor \frac{|\vec{i}|}{2} \right\rfloor - g_2} \\
& A_{\vec{i}, 3} := \sum_{g_2 = 0}^{\left\lfloor \frac{|\vec{i}| + 1}{2} \right\rfloor} {|\vec{i}| + 1 \choose 2g_2} s^{\left\lfloor \frac{|\vec{i}| + 1}{2} \right\rfloor - g_2} \\
& A_{\vec{i}, 4} := \sum_{g_2 = 0}^{\left\lfloor \frac{|\vec{i}|}{2} \right\rfloor} {|\vec{i}| + 1 \choose 2g_2 + 1} s^{\left\lfloor \frac{|\vec{i}| + 1}{2} \right\rfloor - g_2}
\end{align*}

\noindent After some simplification, we have

\begin{align*}
& (A_{\vec{i}, 1}A_{\vec{i}, 3} - A_{\vec{i}, 2}A_{\vec{i}, 4})\widetilde{F}_{\vec{i}}(s) = A_{\vec{i}, 2}\sum_{k = 0}^{(|\vec{i}| - 1)^2 + 1}\frac{b_k}{(s - 1)^{k + 1}} + A_{\vec{i}, 3}\sum_{k = 0}^{(|\vec{i}| - 1)^2 + 1}\frac{a_k}{(s - 1)^{k +1}} \\
& (A_{\vec{i}, 1}A_{\vec{i}, 3} - A_{\vec{i}, 2}A_{\vec{i}, 4})\widetilde{G}_{\vec{i}}(s) = A_{\vec{i}, 4}\sum_{k = 0}^{(|\vec{i}| - 1)^2 + 1}\frac{a_k}{(s - 1)^{k + 1}} + A_{\vec{i}, 1}\sum_{k = 0}^{(|\vec{i}| - 1)^2 + 1}\frac{b_k}{(s - 1)^{k + 1}}
\end{align*}

\noindent {\bf{Case 1}}: $|\vec{i}|$ \ \text{is odd} \\

\noindent In this case, $\left\lfloor \frac{|\vec{i}| + 1}{2} \right\rfloor = \frac{|\vec{i}| + 1}{2}$ and $\left\lfloor \frac{|\vec{i}|}{2} \right\rfloor = \frac{|\vec{i}| - 1}{2}$, so

\begin{align*}
A_{\vec{i}, 1}A_{\vec{i}, 3} - A_{\vec{i}, 2}A_{\vec{i}, 4} & = \left(\sum_{g_2 = 0}^{\frac{|\vec{i}| + 1}{2}}{|\vec{i}| + 1 \choose 2g_2}s^{\frac{|\vec{i}| + 1}{2} - g_2}\right)^2  - \left(\sum_{g_2 = 0}^{\frac{|\vec{i}| - 1}{2}} {|\vec{i}| + 1 \choose 2g_2 + 1}s^{\frac{|\vec{i}| - 1}{2} - g_2}\right) \left( \sum_{g_2 = 0}^{\frac{|\vec{i}| - 1}{2}} {|\vec{i}| + 1 \choose 2g_2 + 1} s^{\frac{|\vec{i}| + 1}{2} - g_2} \right) \\
& = \sum_{k = 0}^{|\vec{i}| + 1}\left( \sum_{\ell + m = |\vec{i}| + 1 - k} {|\vec{i}| + 1 \choose 2\ell}{|\vec{i}| + 1 \choose 2m} \right)s^k -  \sum_{k = 0}^{|\vec{i}|}\left( \sum_{\ell + m = |\vec{i}| - k} {|\vec{i}| + 1 \choose 2\ell + 1}{|\vec{i}| + 1 \choose 2m + 1} \right)s^k \\
& = \sum_{k = 0}^{|\vec{i}| + 1}\left( \sum_{\ell + m = |\vec{i}| + 1 - k}{|\vec{i}| + 1 \choose 2\ell}{|\vec{i}| + 1 \choose 2m} - \sum_{\ell + m = |\vec{i}| - k}{|\vec{i}| + 1 \choose 2\ell + 1}{|\vec{i}| + 1 \choose 2m + 1}  \right)s^k \\
& = \sum_{k = 0}^{|\vec{i}| + 1}\left( \sum_{\ell + m = 2(|\vec{i}| + 1 - k)} (-1)^{\ell}{|\vec{i}| + 1 \choose \ell}{|\vec{i}| + 1 \choose m} \right)s^k \\
(\text{by Lemma \ref{CombinatorialIdentity}}) & = \sum_{k = 0}^{|\vec{i}| + 1}(-1)^k{|\vec{i}| + 1 \choose k}s^k \\
& = (s - 1)^{|\vec{i}| + 1}
\end{align*}

\noindent {\bf{Case 2}}: $|\vec{i}|$ is even \\

\noindent In this case, $\left\lfloor \frac{|\vec{i}| + 1}{2} \right\rfloor = \left\lfloor \frac{|\vec{i}|}{2} \right\rfloor =  \frac{|\vec{i}|}{2}$, so

\begin{align*}
A_{\vec{i}, 1}A_{\vec{i}, 3} - A_{\vec{i}, 2}A_{\vec{i}, 4} & = \left( \sum_{g_2 = 0}^{\frac{|\vec{i}|}{2}}{|\vec{i} + 1 \choose 2g_2}s^{\frac{|\vec{i}|}{2} + 1 - g_2} \right)\left( \sum_{g_2 = 0}^{\frac{|\vec{i}|}{2}} {|\vec{i}| + 1 \choose 2g_2} s^{\frac{|\vec{i}|}{2} - g_2} \right) - \left( \sum_{g_2 = 0}^{\frac{|\vec{i}|}{2}} {|\vec{i}| + 1 \choose 2g_2 + 1} s^{\frac{|\vec{i}|}{2} - g_2}\right)^2 \\
& = \sum_{k = 0}^{|\vec{i}| + 1}\left( \sum_{\ell + m = |\vec{i}| + 1 - k}{|\vec{i}| + 1 \choose 2\ell}{|\vec{i}| + 1 \choose 2m} \right)s^k - \sum_{k = 0}^{|\vec{i}|}\left( \sum_{\ell + m = |\vec{i}| - k} {|\vec{i}| + 1 \choose 2\ell + 1}{|\vec{i}| + 1 \choose 2m + 1} \right)s^k \\
& = \sum_{k = 0}^{|\vec{i}| + 1}\left( \sum_{\ell + m = |\vec{i}| + 1 - k} {|\vec{i}| + 1 \choose 2\ell}{|\vec{i}| + 1 \choose 2m}  - \sum_{\ell + m = |\vec{i}| - k}{|\vec{i}| + 1 \choose 2\ell + 1}{|\vec{i}| + 1 \choose 2m + 1}\right)s^k \\
& = \sum_{k = 0}^{|\vec{i}| + 1}\left( \sum_{\ell + m = 2(|\vec{i}| + 1 - k)} (-1)^{\ell}{|\vec{i}| + 1 \choose \ell}{|\vec{i}| + 1 \choose m} \right)s^k \\
(\text{by Lemma \ref{CombinatorialIdentity}}) & = \sum_{k = 0}^{|\vec{i}| + 1}(-1)^k{|\vec{i}| + 1 \choose k}s^k \\
& = (s - 1)^{|\vec{i}| + 1}
\end{align*}

\noindent Therefore, we have

\begin{align*}
& (s - 1)^{|\vec{i}| + 1}\widetilde{F}_{\vec{i}}(s) = A_{\vec{i}, 2}\sum_{k = 0}^{(|\vec{i}| - 1)^2 + 1}\frac{b_k}{(s - 1)^{k + 1}} + A_{\vec{i}, 3}\sum_{k = 0}^{(|\vec{i}| - 1)^2 + 1}\frac{a_k}{(s - 1)^{k +1}} \\
& (s - 1)^{|\vec{i}| + 1}\widetilde{G}_{\vec{i}}(s) = A_{\vec{i}, 4}\sum_{k = 0}^{(|\vec{i}| - 1)^2 + 1}\frac{a_k}{(s - 1)^{k + 1}} + A_{\vec{i}, 1}\sum_{k = 0}^{(|\vec{i}| - 1)^2 + 1}\frac{b_k}{(s - 1)^{k + 1}}
\end{align*}

\noindent Dividing through by $(s - 1)^{|\vec{i}| + 1}$, we get

\begin{align}
& \widetilde{F}_{\vec{i}}(s) = A_{\vec{i}, 2}\sum_{k = 0}^{(|\vec{i}| - 1)^2 + 1}\frac{b_k}{(s - 1)^{|\vec{i}| + k + 2}} + A_{\vec{i}, 3}\sum_{k = 0}^{(|\vec{i}| - 1)^2 + 1}\frac{a_k}{(s - 1)^{|\vec{i}| + k +2}} \label{KeyStep1} \\
& \widetilde{G}_{\vec{i}}(s) = A_{\vec{i}, 4}\sum_{k = 0}^{(|\vec{i}| - 1)^2 + 1}\frac{a_k}{(s - 1)^{|\vec{i}| + k + 2}} + A_{\vec{i}, 1}\sum_{k = 0}^{(|\vec{i}| - 1)^2 + 1}\frac{b_k}{(s - 1)^{|\vec{i}| + k + 2}} \label{KeyStep2}
\end{align}

\noindent The highest power of $(s - 1)$ on the right hand sides of the above equations is $|\vec{i}| + 2 + (|\vec{i} - 1)^2 + 1 = |\vec{i}|^2 - |\vec{i}| + 4$. Collecting the terms under this common denominator, we get

\begin{align*}
& \widetilde{F}_{\vec{i}}(s) = \frac{\displaystyle \sum_{k = 0}^{(|\vec{i}| - 1)^2 + 1} (A_{\vec{i}, 2}b_k  + A_{\vec{i}, 3}a_k)(s - 1)^{(|\vec{i}| - 1)^2 + 1 - k}}{(s - 1)^{|\vec{i}|^2 - |\vec{i}| + 4}} \\
& \widetilde{G}_{\vec{i}}(s) = \frac{\displaystyle \sum_{k = 0}^{(|\vec{i}| - 1)^2 + 1}(A_{\vec{i}, 4}a_k + A_{\vec{i}, 1}b_k)(s - 1)^{(|\vec{i}| - 1)^2 + 1 - k}}{(s - 1)^{|\vec{i}|^2 - |\vec{i}| + 4}}
\end{align*}

\noindent The next step is to make sure that the degrees of the polynomials occurring in the numerator of $\widetilde{F}_{\vec{i}}$ and $\widetilde{G}_{\vec{i}}$ are strictly less than $|\vec{i}|^2 - |\vec{i}| + 4$. \\

\noindent Lets first consider the numerator of $\widetilde{F}_{\vec{i}}$. The degree of this polynomial, which we denote by $d_1$, is 

\begin{equation*}
d_1 := \text{max}\left\{ \left\lfloor \frac{|\vec{i}|}{2} \right\rfloor, \left\lfloor \frac{|\vec{i}| + 1}{2} \right\rfloor \right\} + (|\vec{i}| - 1)^2 + 1 = \left\lfloor \frac{|\vec{i}| + 1}{2} \right\rfloor + (|\vec{i}| - 1)^2 + 1
\end{equation*}

\noindent If $|\vec{i}|$ is odd, the degree is $\frac{|\vec{i}| + 1}{2} + (|\vec{i}| - 1)^2 + 1 = |\vec{i}|^2 - \frac{3}{2}|\vec{i}| + \frac{5}{2}$, and if $\vec{i}$ is even, the degree is $\frac{|\vec{i}|}{2} + (|\vec{i}| - 1)^2 + 1 = |\vec{i}|^2 - \frac{3}{2}|\vec{i}| + 2$. In either case, $d_1 < |\vec{i}|^2 - |\vec{i}| + 4$. \\

\noindent Now consider the numerator of $\widetilde{G}_{\vec{i}}$, The degree of this polynomial, which we denote by $d_2$, is

\begin{equation*}
d_2 := \text{max}\left\{ \left\lfloor \frac{|\vec{i}|}{2} \right\rfloor + 1, \left\lfloor \frac{|\vec{i}| + 1}{2} \right\rfloor  \right\} + (|\vec{i}| - 1)^2 + 1
\end{equation*}

\noindent When $|\vec{i}|$ is odd, $\text{max}\left\{ \left\lfloor \frac{|\vec{i}|}{2} \right\rfloor + 1, \left\lfloor \frac{|\vec{i}| + 1}{2} \right\rfloor  \right\} = \frac{|\vec{i}| + 1}{2}$, and $d_2 = |\vec{i}|^2 -\frac{3}{2}|\vec{i}| + \frac{5}{2}$. When $|\vec{i}|$ is even, $d_2 = |\vec{i}|^2 - \frac{3}{2}|\vec{i} + 3$. In either case, $d_2 < |\vec{i}|^2 - |\vec{i}| + 4$. \\

\noindent Expanding the numerator polynomials of $\widetilde{F}_{\vec{i}}$ and $\widetilde{G}_{\vec{i}}$ at $s = 1$, we see that there exist constants $f_k, g_k$ such that

\begin{align*}
& \widetilde{F}_{\vec{i}}(s) = \frac{\displaystyle \sum_{k = 0}^{d_1} f_k(s - 1)^k}{(s - 1)^{|\vec{i}|^2 - |\vec{i}| + 4}} = \sum_{k = 0}^{d_1}\frac{f_k}{(s - 1)^{|\vec{i}|^2 - |\vec{i}| + 4 - k}} \\
& \widetilde{G}_{\vec{i}}(s) = \frac{\displaystyle \sum_{k = 0}^{d_2} g_k(s - 1)^k}{(s - 1)^{|\vec{i}|^2 - |\vec{i}| + 4}} = \sum_{k = 0}^{d_2}\frac{g_k}{(s - 1)^{|\vec{i}|^2 - |\vec{i}| + 4 - k}}
\end{align*}

\noindent Taking the inverse Laplace transform, we see that

\begin{align*}
& F_{\vec{i}}(t) = \left(\sum_{k = 0}^{d_1} \frac{f_k}{(|\vec{i}|^2 - |\vec{i}| + 3 - k)!}t^{|\vec{i}|^2 - |\vec{i}| + 4 - k}\right)e^t \\
& G_{\vec{i}}(t) = \left(\sum_{k = 0}^{d_2} \frac{g_k}{(\vec{i}|^2 - |\vec{i}| + 3 - k)!}t^{|\vec{i}|^2 - |\vec{i}| + 4 - k}\right)e^t
\end{align*}

\noindent Since $\displaystyle \sum_{k = 0}^{d_1} \frac{f_k}{(|\vec{i}|^2 - |\vec{i}| + 3 - k)!}t^{|\vec{i}|^2 - |\vec{i}| + 4 - k}$ and $\displaystyle \sum_{k = 0}^{d_2} \frac{g_k}{(\vec{i}|^2 - |\vec{i}| + 3 - k)!}t^{|\vec{i}|^2 - |\vec{i}| + 4 - k}$ are polynomials of degree at most $|\vec{i}|^2 - |\vec{i}| + 4$, by Lemma \ref{PolynomialityExponential}, we see that $D_{(\vec{i}), 2g + 2}$ and $d_{(\vec{i}), 2g + 2}$ are polynomial in $g$, with degree at most $|\vec{i}|^2 - 3|\vec{i}| + 4 \leq |\vec{i}|^2 + 1$, as desired.

\end{proof}


\begin{proof}[Proof of Theorem \ref{PDE}]

The Theorem follows from direct computation and Theorem \ref{NPHrecursion}.  \\

\noindent First, we compute the formal expressions of the functions appearing in Equation \ref{thepdes1} and Equation \ref{thepdes2} :

\begin{align*}
& \partial_tF(-\vec{s}, t) = \sum_{\substack{g \geq 0 \\ \vec{i} \geq \vec{0}}} (-1)^{|\vec{i}|}D_{(\vec{i}), 2g + 2}s^{\vec{i}}\frac{t^{2g + 1}}{(2g + 1)!}\\
& \partial_t^2F(-\vec{s}, t) = \sum_{\substack{g \geq 0 \\ \vec{i} \geq \vec{0}}} (-1)^{|\vec{i}|}D_{(\vec{i}), 2g + 2}s^{\vec{i}}\frac{t^{2g}}{(2g)!} \\
& \partial_t^3F(\vec{s}, t) = \sum_{\substack{g \geq 1 \\ \vec{i} \geq \vec{0}}} D_{(\vec{i}), 2g + 2} s^{\vec{i}}\frac{t^{2g - 1}}{(2g - 1)!} \\
& G(-\vec{s}, t) = \sum_{\substack{g \geq 0 \\ \vec{i} \geq \vec{0}}} (-1)^{|\vec{i}|}d_{(\vec{i}), 2g + 2}s^{\vec{i}}\frac{t^{2g + 2}}{(2g + 2)!} \\
& \partial_tG(-\vec{s}, t) = \sum_{\substack{g \geq 0 \\ \vec{i} \geq \vec{0}}} (-1)^{|\vec{i}|}d_{(\vec{i}), 2g + 2}s^{\vec{i}}\frac{t^{2g + 1}}{(2g + 1)!} \\
& \partial_t^2G(\vec{s}, t) = \sum_{\substack{g \geq 0 \\ \vec{i} \geq \vec{0}}} d_{(\vec{i}), 2g + 2}s^{\vec{i}}\frac{t^{2g}}{(2g)!}
\end{align*}

\noindent The right hand side of Equation \ref{thepdes1} is 

\begin{align*}
2\partial_t^2G(\vec{s}, t)\partial_tG(-\vec{s}, t) & = 2\left( \sum_{\substack{g \geq 0 \\ \vec{i} \geq \vec{0}}} d_{(\vec{i}), 2g + 2}s^{\vec{i}}\frac{t^{2g}}{(2g)!} \right)\left( \sum_{\substack{g \geq 0 \\ \vec{i} \geq \vec{0}}} (-1)^{|\vec{i}|}d_{(\vec{i}), 2g + 2}s^{\vec{i}}\frac{t^{2g + 1}}{(2g + 1)!} \right) \\
& = 2\sum_{\substack{g \geq 1 \\ \vec{i} \geq \vec{0}}}\left(\sum_{\substack{\vec{\ell}_1 + \vec{\ell}_2 = \vec{i} \\ 2g_1 + 2g_2 + 1 = 2g - 1}} (-1)^{|\vec{\ell}_2|}\frac{1}{(2g_1)!}\frac{1}{(2g_2 + 1)!}d_{(\vec{\ell}_1), 2g_1 + 2}d_{(\vec{\ell}_2), 2g_2 + 1}s^{\vec{\ell}_1}s^{\vec{\ell}_2} \right)t^{2g - 1} \\
& = 2\sum_{\substack{g \geq 1 \\ \vec{i} \geq 0}}\left( \sum_{\substack{g_1 + g_2 = g - 1 \\ 0 \leq g_2 \leq g \\ \vec{\ell} \leq \vec{i}}}(-1)^{|\vec{\ell}|}{2g - 1 \choose 2g_2 + 1}d_{(\vec{i} - \vec{\ell}), 2g_1 + 2}d_{(\vec{\ell}), 2g_2 + 2}\right)s^{\vec{i}}\frac{t^{2g - 1}}{(2g - 1)!}
\end{align*}

\noindent The left hand side of Equation \ref{thepdes1} is

\begin{align*}
2\partial_t^3F(\vec{s}, t)\partial_t^2F(-\vec{s}, t) & = 2\left( \sum_{\substack{g \geq 1 \\ \vec{i} \geq \vec{0}}} D_{(\vec{i}), 2g + 2} s^{\vec{i}}\frac{t^{2g - 1}}{(2g - 1)!} \right) \left( \sum_{\substack{g \geq 0 \\ \vec{i} \geq \vec{0}}} (-1)^{|\vec{i}|}D_{(\vec{i}), 2g + 2}s^{\vec{i}}\frac{t^{2g}}{(2g)!} \right) \\
& = 2\sum_{\substack{g \geq 1 \\ \vec{i} \geq \vec{0}}}\left( \sum_{\substack{2g_1 - 1 + 2g_2 = 2g - 1 \\ \vec{\ell}_2 + \vec{\ell}_2 = \vec{i}}} (-1)^{|\vec{\ell}_2|}\frac{1}{(2g_1 - 1)!}\frac{1}{(2g_2)!} D_{(\vec{\ell}_1), 2g_1 + 2}D_{(\vec{\ell}_2), 2g_2 + 2}s^{\vec{\ell}_1}s^{\vec{\ell}_2} \right)t^{2g - 1} \\
& = 2\sum_{\substack{g \geq 1 \\ \vec{i} \geq \vec{0}}}\left( \sum_{\substack{g_1 + g_2 = g \\ 1 \leq g_1 \leq g \\ \vec{\ell} \leq \vec{i}}} (-1)^{|\vec{\ell}|} {2g - 1 \choose 2g_2} D_{(\vec{i} - \vec{\ell}), 2g_1 + 2}D_{(\vec{\ell}), 2g_2 + 2} \right) s^{\vec{i}} \frac{t^{2g - 1}}{(2g - 1)!} \\
& = 2\sum_{\substack{g \geq 1 \\ \vec{i} \geq \vec{0}}}\left( \sum_{\substack{g_1 + g_2 = g \\ 1 \leq g_2 \leq g - 1 \\ \vec{\ell} \leq \vec{i}}} (-1)^{|\vec{\ell}|} {2g - 1 \choose 2g_2} D_{(\vec{i} - \vec{\ell}), 2g_1 + 2}D_{(\vec{\ell}), 2g_2 + 2} + \frac{1}{2}D_{(\vec{i}), 2g + 2} \right) s^{\vec{i}} \frac{t^{2g - 1}}{(2g - 1)!}
\end{align*}

\noindent And therefore, Equation \ref{thepdes1} follows by Equation \ref{NPHtwisted} in Theorem \ref{NPHrecursion} with $k = 0$. \\

\noindent Now we proceed with Equation \ref{thepdes2}. The left hand side of Equation \ref{thepdes2} is

\begin{align*}
2\partial_t^2G(\vec{s}, t)G(-\vec{s}, t) & = 2 \left( \sum_{\substack{g \geq 0 \\ \vec{i} \geq \vec{0}}} d_{(\vec{i}), 2g + 2}s^{\vec{i}}\frac{t^{2g}}{(2g)!}\right) \left( \sum_{\substack{g \geq 0 \\ \vec{i} \geq \vec{0}}} (-1)^{|\vec{i}|} d_{(\vec{i}), 2g + 2}s^{\vec{i}}\frac{t^{2g + 2}}{(2g + 2)!} \right) \\
& = 2\sum_{\substack{g \geq 1 \\ \vec{i} \geq \vec{0}}}\left( \sum_{\substack{2g_1 + 2g_2 + 2 = 2g \\ \vec{\ell}_1 + \vec{\ell}_2 = \vec{i}}} (-1)^{|\vec{\ell}_2|}\frac{1}{(2g_1)!}\frac{1}{(2g_2 + 2)!} d_{(\vec{\ell}_1), 2g_1 + 2}d_{(\vec{\ell}_2), 2g_2 + 2} s^{\vec{\ell}_1}s^{\vec{\ell}_2} \right)t^{2g} \\
& = 2 \sum_{\substack{g \geq 1 \\ \vec{i} \geq \vec{0}}}\left( \sum_{\substack{g_1 + g_2 = g - 1 \\ 0 \leq g_2 \leq g - 1 \\ \vec{\ell} \leq \vec{i}}} (-1)^{|\vec{\ell}|}{2g \choose 2g_2 + 2} d_{(\vec{i} - \vec{\ell}), 2g_1 + 2}d_{(\vec{\ell}), 2g_2 + 2} \right)s^{\vec{i}}\frac{t^{2g}}{(2g)!}
\end{align*}

\noindent The right hand side of Equation \ref{thepdes2} is

\begin{align*}
2\partial_t^3F(\vec{s}, t)\partial_tF(-\vec{s}, t) - \partial_t^2G(\vec{s}, t) & = 2\left( \sum_{\substack{g \geq 1 \\ \vec{i} \geq \vec{0}}} D_{(\vec{i}), 2g + 2} s^{\vec{i}}\frac{t^{2g - 1}}{(2g - 1)!} \right) \left( \sum_{\substack{g \geq 0 \\ \vec{i} \geq \vec{0}}} (-1)^{|\vec{i}|}D_{(\vec{i}), 2g + 2}s^{\vec{i}}\frac{t^{2g + 1}}{(2g + 1)!} \right) \\
& - \sum_{\substack{g \geq 0 \\ \vec{i} \geq \vec{0}}}d_{(\vec{i}), 2g + 2}s^{\vec{i}}\frac{t^{2g}}{(2g)!} \\
& = \sum_{\substack{g \geq 1 \\ \vec{i} \geq \vec{0}}}\left( \left(\sum_{\substack{g_1 + g_2 = g \\ 1 \leq g_2 \leq g - 1 \\ \vec{\ell} \leq \vec{i}}} (-1)^{|\vec{\ell}|}{2g \choose 2g_2 + 1} D_{(\vec{i} - \vec{\ell}), 2g_1 + 2}D_{(\vec{\ell}), 2g_2 + 2} \right)  - d_{(\vec{i}), 2g + 2}\right)s^{\vec{i}}\frac{t^{2g}}{(2g)!}
\end{align*}

\noindent Therefore, Equation \ref{thepdes2} follows from Equation \ref{NPHuntwisted} in Theorem \ref{NPHrecursion} with $k = 0$.
\end{proof}


\section{Example}\label{example}

\noindent To demonstrate the practical use of the theorems in this paper, we go through an example of computing $D_{(\vec{i}), 2g + 2}$ and $d_{(\vec{i}), 2g + 2}$ as polynomials in $g$. Since we know that $2^{|\vec{i}| + 1}D_{(\vec{i}), 2g + 2}$ and $2^{|\vec{i}| + 1}d_{(\vec{i}), 2g + 2}$ are integer valued polynomials in $g$, these polynomials can be written uniquely as linear combinations of

\begin{equation*}
{g \choose 0}, {g \choose 1}, \ldots, {g \choose |\vec{i}|^2 + 1}
\end{equation*}

\noindent where the coefficients are integers. This is summarized in the following Corollary:


\begin{Corollary}\label{binomialbasis}
Let $\vec{i} = (i_1, \ldots, i_n) \in \mathbb{Z}^n_{\geq 0}$. Define the integers $m_{\vec{i}}$ and $\widetilde{m}_{\vec{i}}$ by

\begin{align*}
& m_{\vec{i}} := \text{min}\left\{g : \int_{\mbar_{0, (2g + 2)t}(\bztwo)}(\psi_j)^{2g - 1 - |\vec{i}|} \lambda_{i_1}\ldots\lambda_{i_n} \not = 0\right\} \\
& \widetilde{m}_{\vec{i}} := \text{min}\left\{ g : \int_{\mbar_{0, (2g + 2)t, 1u}(\bztwo)}(\psi_{2g + 3})^{2g - |\vec{i}|} \lambda_{i_1}\ldots\lambda_{i_n} \not = 0\right\}
\end{align*}

\noindent Then there exists integers $c_{\vec{i}, k}$ and $\widetilde{c}_{\vec{i}, k}$, where $0 \leq k \leq |\vec{i}|^2 + 1$, such that, for all $g$, 

\begin{align*}
& 2^{|\vec{i}| + 1}\int_{\mbar_{0, (2g + 2)t}(\bztwo)}(\psi_j)^{2g - 1 - |\vec{i}|} \lambda_{i_1}\ldots\lambda_{i_n} = \sum_{k = m_{\vec{i}}}^{|\vec{i}|^2 + 1}c_{\vec{i}, k}{g \choose k} \\
& 2^{|\vec{i}| + 1}\int_{\mbar_{0, (2g + 2)t, 1u}(\bztwo)}(\psi_{2g + 3})^{2g - |\vec{i}|} \lambda_{i_1}\ldots\lambda_{i_n} = \sum_{k = \widetilde{m}_{\vec{i}}}^{|\vec{i}|^2 + 1}\widetilde{c}_{\vec{i}, k}{g \choose k}
\end{align*}

\noindent With the aid of Theorem \ref{NPHrecursion} and Equation \ref{PHrecursion}, the integers $c_{\vec{i}, k}$ and $\widetilde{c}_{\vec{i}, k}$ are computed recursively by

\begin{align*}
& c_{\vec{i}, 0} = 2^{|\vec{i}| + 1}\int_{\mbar_{0, 2t}(\bztwo)}(\psi_j)^{2g - 1 - |\vec{i}|} \lambda_{i_1}\ldots\lambda_{i_n} \\
& \widetilde{c}_{\vec{i}, 0} = 2^{|\vec{i}| + 1}\int_{\mbar_{0, 2t, 1u}(\bztwo)}(\psi_{2g + 3})^{2g - |\vec{i}|} \lambda_{i_1}\ldots\lambda_{i_n} \\
(k > 0) \hspace{1cm} & c_{\vec{i}, k} = 2^{|\vec{i}| + 1}\int_{\mbar_{0, (2k + 2)t}(\bztwo)}(\psi_j)^{2g - 1 - |\vec{i}|} \lambda_{i_1}\ldots\lambda_{i_n} - \sum_{j = 0}^{k - 1}c_{\vec{i}, j}{k \choose j} \\
(k > 0) \hspace{1cm} & \widetilde{c}_{\vec{i}, k} = 2^{|\vec{i}| + 1}\int_{\mbar_{0, (2k + 2)t, 1u}(\bztwo)}(\psi_{2g + 3})^{2g - |\vec{i}|} \lambda_{i_1}\ldots\lambda_{i_n} - \sum_{j = 0}^{k - 1}\widetilde{c}_{\vec{i}, j}{k \choose j}
\end{align*}
\end{Corollary}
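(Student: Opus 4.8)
The Corollary is essentially a repackaging of Theorems \ref{Integrality} and \ref{Polynomiality} through the standard theory of integer-valued polynomials, so the plan is short. First I would combine the two theorems: by Theorem \ref{Polynomiality}, for a fixed $\vec{i}$ the assignment $g \mapsto 2^{|\vec{i}|+1}D_{(\vec{i}),2g+2}$ agrees with a polynomial in $g$ of degree at most $|\vec{i}|^2+1$, and by Theorem \ref{Integrality} this polynomial takes integer values at every non-negative integer. Writing $N := |\vec{i}|^2+1$, Lemma \ref{integervalued} then produces unique integers $c_{\vec{i},0},\dots,c_{\vec{i},N}$ with $2^{|\vec{i}|+1}D_{(\vec{i}),2g+2} = \sum_{k=0}^{N} c_{\vec{i},k}\binom{g}{k}$, uniqueness being just the fact that $\{\binom{g}{k}\}_{k=0}^{N}$ is a basis of the space of polynomials of degree $\le N$. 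The same reasoning applies verbatim to $2^{|\vec{i}|+1}d_{(\vec{i}),2g+2}$, producing coefficients $\widetilde{c}_{\vec{i},k}$.

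Next I would trim the sum so that it starts at $k = m_{\vec{i}}$ (respectively $\widetilde{m}_{\vec{i}}$). By definition of $m_{\vec{i}}$ the integral $D_{(\vec{i}),2g+2}$ vanishes for every $g$ with $0 \le g < m_{\vec{i}}$, so $\sum_{k=0}^{N}c_{\vec{i},k}\binom{g}{k} = 0$ for those $g$. Evaluating this identity successively at $g = 0, 1, \dots, m_{\vec{i}}-1$ and using that $\binom{g}{k} = 0$ whenever $k > g$, an immediate induction forces $c_{\vec{i},0} = c_{\vec{i},1} = \cdots = c_{\vec{i},m_{\vec{i}}-1} = 0$: at step $g = r$ the relation reads $\sum_{k=0}^{r}c_{\vec{i},k}\binom{r}{k} = 0$, and all terms with $k < r$ vanish by the inductive hypothesis while $\binom{r}{r}=1$. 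Hence the expansion genuinely runs from $k = m_{\vec{i}}$ to $N$, and identically from $\widetilde{m}_{\vec{i}}$ to $N$ for the $d$-series.

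Finally, the recursion for the coefficients is Newton forward-difference interpolation in disguise: evaluating $\sum_{k=0}^{N}c_{\vec{i},k}\binom{g}{k} = 2^{|\vec{i}|+1}D_{(\vec{i}),2g+2}$ at $g = 0$ gives $c_{\vec{i},0} = 2^{|\vec{i}|+1}D_{(\vec{i}),2}$, the value attached to $\mbar_{0,2t}(\bztwo)$, and at $g = k$ for $k \ge 1$ gives, again because $\binom{k}{j}=0$ for $j>k$ and $\binom{k}{k}=1$, the triangular relation $c_{\vec{i},k} = 2^{|\vec{i}|+1}D_{(\vec{i}),2k+2} - \sum_{j=0}^{k-1}c_{\vec{i},j}\binom{k}{j}$; the same computation yields the formulas for $\widetilde{c}_{\vec{i},k}$. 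The inputs $D_{(\vec{i}),2k+2}$ and $d_{(\vec{i}),2k+2}$ feeding this recursion are themselves produced from the descendent initial conditions by the recursions of Theorem \ref{NPHrecursion} together with Equation \ref{PHrecursion} — that is, by Theorem \ref{Recursion} — which makes the scheme effective. There is no serious obstacle here; the only mild points are making sure the polynomial from Theorem \ref{Polynomiality} genuinely interpolates the values $2^{|\vec{i}|+1}D_{(\vec{i}),2k+2}$ that feed the recursion (in particular the base value over $\mbar_{0,2t}(\bztwo)$), and observing that whenever the polynomial is not identically zero we have $m_{\vec{i}} \le |\vec{i}|^2+1$, so that the trimmed sum is well posed.
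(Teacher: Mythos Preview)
Your proposal is correct and matches the paper's approach: the paper presents this Corollary as an immediate consequence of Theorems \ref{Integrality} and \ref{Polynomiality} together with Lemma \ref{integervalued}, simply noting that the normalized integrals are integer-valued polynomials and hence admit a unique expansion in the binomial basis, without writing out the triangular evaluation argument you spell out. Your version is a careful unpacking of exactly that reasoning, including the trimming of the sum to start at $m_{\vec{i}}$ and the Newton forward-difference interpretation of the coefficient recursion.
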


It is difficult to implement our recursions by hand, so instead, the author has written procedures in Maple to aid in this computation. The code is available at \cite{Website}.

\begin{Example}
$\vec{i} = (1, 2)$ \\

\noindent Since $|\vec{i}| = 3$ and $m_{(1, 2)} = \widetilde{m}_{(1, 2)} = 2$, we have

\begin{align*}
& 2^4D_{(1, 2), 2g + 2} = \sum_{k = 2}^{10}c_{(1, 2), k}{g \choose k} \\
& 2^4d_{(1, 2), 2g + 2} = \sum_{k = 2}^{10}\widetilde{c}_{(1, 2), k}{g \choose k}
\end{align*}

\noindent Using the recursions for $c_{\vec{i}, k}$ and $\widetilde{c}_{\vec{i}, k}$ in Corollary \ref{binomialbasis}, we have

\begin{align*}
& c_{(1, 2), 2} = 2^4D_{(1, 2), 6} = 2 \\
& c_{(1, 2), 3} = 2^4D_{(1, 2), 8} - 2{3 \choose 2} = 61 \\
& c_{(1, 2), 4} = 2^4D_{(1, 2), 10} - 62{4 \choose 3} - 2{4 \choose 2} = 364 \\
& c_{(1, 2), 5} = 2^4D_{(1, 2), 12} - 364{5 \choose 4} - 61{5 \choose 3} - 2{5 \choose 2} = 660 \\
& c_{(1, 2), 6} = 2^4D_{(1, 2), 14} - 660{6 \choose 5} - 364{6 \choose 4} - 61{6 \choose 3} - 2{6 \choose 2} = 360 \\
& c_{(1, 2), 7} = c_{(1, 2), 8} = c_{(1, 2), 9} = c_{(1, 2), 10} = 0
\end{align*}

\noindent A similar calculation results in 

\begin{equation*}
\{\widetilde{c}_{(1, 2), k} \}_{k \geq 2} = \{8, 168, 640, 840, 360, 0, 0, \ldots \}
\end{equation*}
\end{Example}

\noindent Therefore, we have

\begin{align*}
& 2^4D_{(1, 2), 2g + 2} = 2{g \choose 2} + 61{g \choose 3} + 364{g \choose 4} + 660{g \choose 5} + 360{g \choose 6} \\
& 2^4d_{(1, 2), 2g + 2} = 8{g \choose 2} + 168{g \choose 3} + 640{g \choose 4} + 840{g \choose 5} + 360{g \choose 6}
\end{align*}


\section{Conjectures, Open Problems}\label{conjectures}

The combinatorial structure that governs the $\mathbb{Z}_2$ Hurwitz-Hodge integrals $D_{(\vec{i}, 2g + 2}$ and $d_{(\vec{i}), 2g + 2}$ is far from being completely understood. Here, we mention some out standing problems concerning these intersection numbers, and outline some of the avenues of investigation for future work. 

By Theorem \ref{Polynomiality}, we know that $D_{(\vec{i}), 2g + 2}$ and $d_{(\vec{i}), 2g + 2}$ are polynomials in $g$ of degree at most $|\vec{i}|^2 + 1$. However, the data in the Appendix suggests the following much sharper bound:

\begin{Conjecture}\label{polynomialconjecture}
The integrals $D_{(\vec{i}), 2g + 2}$ and $d_{(\vec{i}), 2g + 2}$ are polynomials in $g$, and their degrees are precisely $2|\vec{i}|$
\end{Conjecture}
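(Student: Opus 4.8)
Keep the notation of the proof of Theorem \ref{Polynomiality}: set $F_{\vec{i}}(t) = \sum_{g \geq 0} D_{(\vec{i}), 2g+2}\tfrac{t^g}{g!}$, $G_{\vec{i}}(t) = \sum_{g\geq 0}d_{(\vec{i}), 2g+2}\tfrac{t^g}{g!}$, and let $A_{\vec{i},1},\dots,A_{\vec{i},4}$ be the polynomials appearing there, for which $A_{\vec{i},1}A_{\vec{i},3}-A_{\vec{i},2}A_{\vec{i},4} = (s-1)^{|\vec{i}|+1}$ and $A_{\vec{i},1}(1) = A_{\vec{i},2}(1) = A_{\vec{i},3}(1) = A_{\vec{i},4}(1) = 2^{|\vec{i}|}$ (sums of even, resp.\ odd, binomial coefficients of $(1+1)^{|\vec{i}|+1}$). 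By Theorem \ref{Polynomiality} and Lemma \ref{PolynomialityExponential}, $\Phi_{\vec{i}}(t) := e^{-t}F_{\vec{i}}(t)$ and $\Psi_{\vec{i}}(t) := e^{-t}G_{\vec{i}}(t)$ are polynomials, and the Conjecture is equivalent to $\deg \Phi_{\vec{i}} = \deg \Psi_{\vec{i}} = 2|\vec{i}|$. The plan is a strong induction on $|\vec{i}|$. Conjugating the linear ODE system of that proof by $e^t$ (using $(\partial_t - 1)(e^t h) = e^t \partial_t h$ and $p(\partial_t)(e^t h) = e^t p(\partial_t + 1)h$) and using $A_{\vec{i},1}A_{\vec{i},3}-A_{\vec{i},2}A_{\vec{i},4}=(s-1)^{|\vec{i}|+1}$ to eliminate $G_{\vec{i}}$, one obtains the decoupled identities
\begin{align*}
& \partial_t^{|\vec{i}|+1}\Phi_{\vec{i}}(t) = A_{\vec{i},3}(\partial_t + 1)P_{\vec{i}}(t) + A_{\vec{i},2}(\partial_t + 1)Q_{\vec{i}}(t) =: R_{\vec{i}}(t), \\
& \partial_t^{|\vec{i}|+1}\Psi_{\vec{i}}(t) = A_{\vec{i},4}(\partial_t + 1)P_{\vec{i}}(t) + A_{\vec{i},1}(\partial_t + 1)Q_{\vec{i}}(t) =: \widetilde{R}_{\vec{i}}(t),
\end{align*}
where $P_{\vec{i}},Q_{\vec{i}}$ are the polynomials encoding the product terms of the recursions of Theorem \ref{NPHrecursion}. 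Since $\ker \partial_t^{|\vec{i}|+1}$ consists of polynomials of degree $\leq |\vec{i}| < 2|\vec{i}|$, the Conjecture becomes equivalent to $\deg R_{\vec{i}} = \deg \widetilde{R}_{\vec{i}} = |\vec{i}| - 1$.

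For the inductive step I would strengthen the hypothesis so that, for every $\vec{v}$ with $|\vec{v}| < |\vec{i}|$, not only are $D_{(\vec{v}), 2g+2}$ and $d_{(\vec{v}), 2g+2}$ polynomials in $g$ of degree exactly $2|\vec{v}|$, but they have equal leading coefficients and, more generally, a prescribed matching in their top $O(|\vec{i}|)$ coefficients. Because each product term in $P_{\vec{i}}, Q_{\vec{i}}$ is, for a genus split $g_2$ ranging over a \emph{bounded} set, the product of a lower-$|\vec{i}|$ polynomial in $g$ with a \emph{constant}, one already gets $\deg P_{\vec{i}}, \deg Q_{\vec{i}} \leq 2|\vec{i}| - 2$ — which alone improves the bound of Theorem \ref{Polynomiality} to $\deg \leq 3|\vec{i}| - 1$. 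The real content is to show that in $R_{\vec{i}} = A_{\vec{i},3}(\partial_t + 1)P_{\vec{i}} + A_{\vec{i},2}(\partial_t + 1)Q_{\vec{i}}$ the top $|\vec{i}| - 1$ coefficients cancel between the two summands.

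For the leading coefficient this can be verified by hand. Since $A_{\vec{i},2}(1) = A_{\vec{i},3}(1) = 2^{|\vec{i}|}$, the coefficient of $t^{2|\vec{i}|-2}$ in $R_{\vec{i}}$ equals $2^{|\vec{i}|}$ times the sum of the leading coefficients of $P_{\vec{i}}$ and $Q_{\vec{i}}$; expanding these with the induction hypothesis (equality of the leading coefficients of $D$ and $d$) they equal a common positive factor times a binomial-weighted combination of the closed-form values $D_{(1), 2g_2+2} = g_2^2/4$ and $d_{(1), 2g_2+2} = g_2(g_2+1)/4$ from \cite{Afandi2019}, and the desired vanishing reduces to
\begin{equation*}
\sum_{g_2 \geq 0}{|\vec{i}|+1 \choose 2g_2+1}(2g_2^2 + g_2) = \sum_{g_2 \geq 0}{|\vec{i}|+1 \choose 2g_2 + 2}(g_2^2 + g_2) + \sum_{g_2 \geq 0}{|\vec{i}|+1 \choose 2g_2}g_2^2,
\end{equation*}
which follows, in the spirit of Lemma \ref{CombinatorialIdentity}, by applying the operator $x\tfrac{d}{dx}$ once or twice to $(1+x)^{|\vec{i}|+1}$ and $(1-x)^{|\vec{i}|+1}$ and extracting the appropriate coefficients. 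The lower coefficients of $R_{\vec{i}}$ demand analogous but progressively more involved binomial identities together with the matching data for the sub-leading coefficients of the lower-$|\vec{i}|$ polynomials carried in the hypothesis.

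Finally, to conclude $\deg R_{\vec{i}} = |\vec{i}| - 1$ \emph{exactly} (so that the degree is $2|\vec{i}|$ and not smaller) one must show $[t^{|\vec{i}|-1}]R_{\vec{i}} \neq 0$; this is expected to follow from positivity of the Hurwitz-Hodge integrals in question — hence of the leading coefficients of the $g$-polynomials — which the Appendix data supports and which is compatible with the log-concavity conjecture, or else from an explicit formula for the leading coefficient. \textbf{The main obstacle} is twofold: (i) pinning down the self-reproducing strengthened induction hypothesis on the top $O(|\vec{i}|)$ coefficients of $D_{(\vec{v}), 2g+2}$ and $d_{(\vec{v}), 2g+2}$ — in particular establishing the auxiliary statement that $D$ and $d$ have the same leading term (itself reducing to a comparison $[t^{|\vec{i}|-1}]R_{\vec{i}} = [t^{|\vec{i}|-1}]\widetilde{R}_{\vec{i}}$, aided by $A_{\vec{i},j}(1) = 2^{|\vec{i}|}$ for all $j$); and (ii) proving the full family of binomial identities generalizing Lemma \ref{CombinatorialIdentity} that the cascade of sub-leading cancellations requires.
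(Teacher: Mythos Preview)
The statement you are attempting to prove is Conjecture~\ref{polynomialconjecture}, which is \emph{open} in the paper: there is no proof to compare against. In fact the paper explicitly argues (immediately after stating the Conjecture) that the Laplace-transform method of Theorem~\ref{Polynomiality} cannot yield any linear bound $n|\vec{i}|$ on the degree, because after dividing by $(s-1)^{|\vec{i}|+1}$ the numerators acquire apparent poles at $s=1$, and concludes that ``proving Conjecture~\ref{polynomialconjecture} will require a different approach.''

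Your reformulation is correct and sharper than the paper's discussion. Conjugating by $e^t$ and using $(A_{\vec{i},1}A_{\vec{i},3}-A_{\vec{i},2}A_{\vec{i},4})(s)=(s-1)^{|\vec{i}|+1}$ does give $\partial_t^{|\vec{i}|+1}\Phi_{\vec{i}}=R_{\vec{i}}$ with $R_{\vec{i}}$ a polynomial of degree at most $2|\vec{i}|-2$ (this already improves Theorem~\ref{Polynomiality} to $\deg\le 3|\vec{i}|-1$), and the Conjecture becomes the assertion $\deg R_{\vec{i}}=|\vec{i}|-1$. Your top-coefficient check is also right: the claimed identity rewrites as $\sum_k(-1)^k\binom{|\vec{i}|+1}{k}\binom{k}{2}=0$, which is $\binom{|\vec{i}|+1}{2}\sum_m(-1)^m\binom{|\vec{i}|-1}{m}=0$ for $|\vec{i}|\ge 2$.

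The genuine gap is exactly the one you name yourself, and it is not a technicality. You have not \emph{stated} a precise strengthened induction hypothesis---only gestured at ``a prescribed matching in the top $O(|\vec{i}|)$ coefficients'' of $D_{(\vec{v}),2g+2}$ and $d_{(\vec{v}),2g+2}$---and without a precise statement there is nothing to verify closes under the induction. The number of required cancellations in $R_{\vec{i}}$ grows with $|\vec{i}|$, so at each step you would need correspondingly more sub-leading information at every lower level; it is not clear that any finite package of coefficient relations is self-reproducing. The single vanishing you checked used only the equality of leading coefficients of $D$ and $d$, but the next one will involve the sub-leading coefficients and the derivatives of the $A_{\vec{i},j}$ at $1$, and so on. Likewise the exactness claim $\deg R_{\vec{i}}=|\vec{i}|-1$ is deferred to ``positivity'' that is itself conjectural (part of Conjecture~\ref{unimodalconjecture}). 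As written, this is a promising outline that goes one honest step beyond the paper's negative discussion, but it is not a proof.
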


Let us take a moment to explain what happens when one tries to use the proof techniques in the proof of Theorem \ref{Polynomiality} to tackle Conjecture \ref{polynomialconjecture}. When we try to sharpen the bound on the degrees of the polynomials $D_{(\vec{i}), 2g + 2}$ and $d_{(\vec{i}), 2g + 2}$ to $2|\vec{i}|$, the only difference in the proof would be the induction step, and Equations \ref{KeyStep1} and \ref{KeyStep2} become

\begin{align*}
& \widetilde{F}_{\vec{i}}(s) = A_{\vec{i}, 2}\sum_{k = 0}^{2(|\vec{i}| - 1)}\frac{b_k}{(s - 1)^{|\vec{i}| + k + 2}} + A_{\vec{i}, 3}\sum_{k = 0}^{2(|\vec{i}| - 1)}\frac{a_k}{(s - 1)^{|\vec{i}| + k +2}} \\
& \widetilde{G}_{\vec{i}}(s) = A_{\vec{i}, 4}\sum_{k = 0}^{2(|\vec{i}| - 1)}\frac{a_k}{(s - 1)^{|\vec{i}| + k + 2}} + A_{\vec{i}, 1}\sum_{k = 0}^{2(|\vec{i}| - 1)}\frac{b_k}{(s - 1)^{|\vec{i}| + k + 2}}
\end{align*}

\noindent and therefore,

\begin{align*}
& \widetilde{F}_{\vec{i}}(s) = \frac{\displaystyle \sum_{k = 0}^{2(|\vec{i}| - 1)}(A_{\vec{i}, 2}b_k + A_{\vec{i}, 3}a_k)(s - 1)^{2(|\vec{i}| - 1) - k}}{(s - 1)^{3|\vec{i}|}} = \frac{\displaystyle \sum_{k = 0}^{2(|\vec{i}| - 1)}(A_{\vec{i}, 2}b_k + A_{\vec{i}, 3}a_k)(s - 1)^{|\vec{i}| - 1 - k}}{(s - 1)^{2|\vec{i}| + 1}} \\
& \widetilde{G}_{\vec{i}}(s) = \frac{\displaystyle \sum_{k = 0}^{2(|\vec{i}| - 1)}(A_{\vec{i}, 4}a_k + A_{\vec{i}, 1}b_k)(s - 1)^{2(|\vec{i}| - 1) - k}}{(s - 1)^{3|\vec{i}|}} = \frac{\displaystyle \sum_{k = 0}^{2(|\vec{i}| - 1)}(A_{\vec{i}, 4}a_k + A_{\vec{i}, 1}b_k)(s - 1)^{|\vec{i}| - 1 - k}}{(s - 1)^{2|\vec{i}| + 1}}
\end{align*}

\noindent At this point, the proof would go through as before only if the numerators are guaranteed to be \emph{polynomials}, but we can't guarantee this since the expressions in the numerators have poles at $s = 1$ for $k > |\vec{i}| - 1$. 

In fact, the techniques used in the proof of Theorem \ref{Polynomiality} do not accommodate any linear bound in $|\vec{i}|$. Indeed, if we wanted to prove that the polynomial degrees were bounded by $n|\vec{i}|$, then Equations \ref{KeyStep1} and \ref{KeyStep2} become

\begin{align*}
& \widetilde{F}_{\vec{i}}(s) = A_{\vec{i}, 2}\sum_{k = 0}^{n(|\vec{i}| - 1)}\frac{b_k}{(s - 1)^{|\vec{i}| + k + 2}} + A_{\vec{i}, 3}\sum_{k = 0}^{n(|\vec{i}| - 1)}\frac{a_k}{(s - 1)^{|\vec{i}| + k +2}} \\
& \widetilde{G}_{\vec{i}}(s) = A_{\vec{i}, 4}\sum_{k = 0}^{n(|\vec{i}| - 1)}\frac{a_k}{(s - 1)^{|\vec{i}| + k + 2}} + A_{\vec{i}, 1}\sum_{k = 0}^{n(|\vec{i}| - 1)}\frac{b_k}{(s - 1)^{|\vec{i}| + k + 2}}
\end{align*}

and therefore,

\begin{align*}
& \widetilde{F}_{\vec{i}}(s) = \frac{\displaystyle \sum_{k = 0}^{n(|\vec{i}| - 1)}(A_{\vec{i}, 2}b_k + A_{\vec{i}, 3}a_k)(s - 1)^{n(|\vec{i}| - 1) - k}}{(s - 1)^{(n + 1)|\vec{i}| + (2 - n)}} = \frac{\displaystyle \sum_{k = 0}^{n(|\vec{i}| - 1)}(A_{\vec{i}, 2}b_k + A_{\vec{i}, 3}a_k)(s - 1)^{(n - 1)|\vec{i}| - 1 - k}}{(s - 1)^{n|\vec{i}| + 1}} \\
& \widetilde{G}_{\vec{i}}(s) = \frac{\displaystyle \sum_{k = 0}^{n(|\vec{i}| - 1)}(A_{\vec{i}, 4}a_k + A_{\vec{i}, 1}b_k)(s - 1)^{n(|\vec{i}| - 1) - k}}{(s - 1)^{(n + 1)|\vec{i}| + (2 - n)}} = \frac{\displaystyle \sum_{k = 0}^{n(|\vec{i}| - 1)}(A_{\vec{i}, 4}a_k + A_{\vec{i}, 1}b_k)(s - 1)^{(n - 1)|\vec{i}| - 1 - k}}{(s - 1)^{n|\vec{i}| + 1}}
\end{align*}

\noindent in which case we still cannot guarantee that the expression in the numerators are polynomials due to poles at $s = 1$ when $k > (n - 1)|\vec{i}| - 1$. Alas, proving Conjecture \ref{polynomialconjecture} will require a different approach. 

By Theorem \ref{Integrality} we know that $|\vec{i}| + 1$ is a \emph{sufficient} power of $2$ that will normalize the Hodge integrals so that they are integral. However, the exponent $|\vec{i}| + 1$ is far from being \emph{necessary}. For example, using the recursions, one can compute 

\begin{equation*}
\int_{\overline{\mathcal{H}}_{4, 10}}\lambda_1\lambda_2\lambda_4 =: D_{(1, 2, 4), 10} = \frac{27}{8}
\end{equation*}

\noindent Certainly, $2^{(1 + 2 + 4) + 1}D_{(1,2,4), 10} \in \mathbb{Z}$, but $3$ is the \emph{smallest} exponent for which the integrality holds. Thus, we have the following open problem:

\begin{Open Problem}
Compute the integers $m_1(\vec{i}, g)$ and $m_2(\vec{i}, g)$, where

\begin{align*}
& m_1(\vec{i}, g) := \text{min}\left \{\alpha : 2^{\alpha} D_{(\vec{i}), 2g + 2} \in \mathbb{Z} \right\} \\
& m_2(\vec{i}, g) := \text{min}\left \{\alpha : 2^{\alpha} d_{(\vec{i}), 2g + 2} \in \mathbb{Z} \right\}
\end{align*}
\end{Open Problem}

\noindent By Corollary \ref{binomialbasis}, we know that there exists integers $c_{\vec{i}, k}$ and $\widetilde{c}_{\vec{i}, k}$ such that

\begin{align}
& 2^{|\vec{i}| + 1}D_{(\vec{i}), 2g + 2} = \sum_{k = 0}^{|\vec{i}|^2 + 1}c_{\vec{i}, k}{g \choose k} \label{binomialbasis1} \\
& 2^{|\vec{i}| + 1}d_{(\vec{i}), 2g + 2} = \sum_{k = 0}^{|\vec{i}|^2 + 1}\widetilde{c}_{\vec{i}, k}{g \choose k} \label{binomialbasis2}
\end{align}

\noindent Many examples of the sequences $\{c_{\vec{i}, k}\}_{k}$ and $\{\widetilde{c}_{\vec{i}, k}\}_k$ are shown in the Appendix, and from this data, we make the following conjecture: 

\begin{Conjecture}\label{unimodalconjecture}
Let $c_{\vec{i}, k}$ and $\widetilde{c}_{\vec{i}, k}$ be the integers defined in Equations \ref{binomialbasis1} and \ref{binomialbasis2}. Then

\begin{enumerate}
\item{The integers $c_{\vec{i}, k}$ and $\widetilde{c}_{\vec{i}, k}$ are $\geq 0$} 
\item{For fixed $\vec{i}$, the sequences $\{c_{\vec{i}, k}\}_{k \geq 0}$ and $\{\widetilde{c}_{\vec{i}, k}\}_{k \geq 0}$ are log-concave}
\end{enumerate}
\end{Conjecture}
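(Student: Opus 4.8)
The plan is to derive Conjecture \ref{unimodalconjecture} from the stronger assertion that the generating polynomials
\[
C_{\vec i}(x) := \sum_{k\geq 0} c_{\vec i,k}\,x^{k},
\qquad
\widetilde C_{\vec i}(x) := \sum_{k\geq 0}\widetilde c_{\vec i,k}\,x^{k}
\]
are \emph{real-rooted with all roots in $(-\infty,0]$}. Granting this, part (1) is immediate (a real-rooted polynomial with non-positive roots has non-negative coefficients), and part (2) follows from Newton's inequalities: for a real-rooted $\sum_{k=0}^{d}c_{k}x^{k}$ with $c_{k}\geq 0$ one gets $c_{k}^{2}\geq\frac{(k+1)(d-k+1)}{k(d-k)}c_{k-1}c_{k+1}\geq c_{k-1}c_{k+1}$, so $\{c_{\vec i,k}\}$ is log-concave with no internal zeros on its support, and likewise for $\{\widetilde c_{\vec i,k}\}$. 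Everything therefore reduces to proving real-rootedness of $C_{\vec i}$ and $\widetilde C_{\vec i}$, which I would attempt by induction on $|\vec i|$, the base case $\vec i=\vec 0$ being trivial since $C_{\vec 0}=\widetilde C_{\vec 0}=1$.

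The first step is to transport real-rootedness to the Laplace side of the proof of Theorem \ref{Polynomiality}. Combining Corollary \ref{binomialbasis} with Lemma \ref{binexpfunc} gives $2^{|\vec i|+1}F_{\vec i}(t)=\bigl(\sum_{k}\tfrac{c_{\vec i,k}}{k!}t^{k}\bigr)e^{t}$, and the proof of Theorem \ref{Polynomiality} writes $\widetilde F_{\vec i}(s)=N_{\vec i}(s-1)/(s-1)^{M}$ with $M=|\vec i|^{2}-|\vec i|+4$ and $N_{\vec i}$ a polynomial; chasing the inverse Laplace transform yields $c_{\vec i,m}=2^{|\vec i|+1}\,m\,f_{M-m}$, where $f_{j}$ are the coefficients of $N_{\vec i}$. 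Hence
\[
C_{\vec i}(x)=2^{|\vec i|+1}\,x\frac{d}{dx}\bigl(x^{M}N_{\vec i}(1/x)\bigr),
\]
and similarly for $\widetilde C_{\vec i}$ in terms of the numerator of $\widetilde G_{\vec i}$. Since $y\mapsto y^{M}N(1/y)$ (inversion of roots, plus a zero at the origin) and $x\tfrac{d}{dx}$ both preserve real-rootedness and keep roots in $(-\infty,0]$, it suffices to show that the numerators of $\widetilde F_{\vec i}(s)$ and $\widetilde G_{\vec i}(s)$ are real-rooted with all roots $\leq 1$.

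The second step is to control the building blocks. From the proof, the numerator of $\widetilde F_{\vec i}$ has the form $A_{\vec i,2}\,B_{\vec i}(s)+A_{\vec i,3}\,A_{\vec i}(s)$, where $A_{\vec i}(s),B_{\vec i}(s)$ are the polynomials assembled from the source constants $a_{k},b_{k}$ (which, by the inductive hypothesis, encode the real-rooted lower-level data $F_{\vec i-\vec\ell},G_{\vec i-\vec\ell}$), and $A_{\vec i,2},A_{\vec i,3}$ are the explicit polynomials of the proof. A short computation shows each $A_{\vec i,j}$ is real-rooted with non-positive roots: writing, e.g., $A_{\vec i,1}$ in terms of $(1\pm\sqrt s)^{|\vec i|+1}$ and solving, its roots are $s=-\cot^{2}\!\bigl(\tfrac{(2j+1)\pi}{2(|\vec i|+1)}\bigr)\leq 0$ (possibly together with $s=0$), and the analogous statement holds for $A_{\vec i,2},A_{\vec i,3},A_{\vec i,4}$ with $\tan$ in place of $\cot$. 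I would then invoke the standard principle that a non-negative combination of real-rooted polynomials sharing a common interlacer is real-rooted, and that multiplication by a fixed real-rooted polynomial with non-positive roots respects interlacing; accordingly the induction should carry not merely the real-rootedness of $C_{\vec i},\widetilde C_{\vec i}$ but a \emph{common-interlacing} package for the relevant pairs (such as $(A_{\vec i,2}B_{\vec i},A_{\vec i,3}A_{\vec i})$), to be propagated through the recursions of Theorem \ref{NPHrecursion}.

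The main obstacle will be the quadratic convolution terms $\sum_{\vec\ell}(-1)^{|\vec\ell|}D_{(\vec i-\vec\ell)}D_{(\vec\ell)}$ and $\sum_{\vec\ell}(-1)^{|\vec\ell|}d_{(\vec i-\vec\ell)}d_{(\vec\ell)}$: in generating-function form these are products of two real-rooted factors combined with \emph{alternating} signs (this is precisely the appearance of $F(-\vec s,t)$ and $G(-\vec s,t)$ in Theorem \ref{PDE}), and there is no general theorem that such signed sums remain real-rooted. Overcoming this will likely require new input — either showing that the sign is absorbed by the involution $\vec s\mapsto-\vec s$ so that the convolution becomes sign-definite in the right coordinates, or producing a direct enumerative model for $c_{\vec i,k},\widetilde c_{\vec i,k}$ (for instance via admissible covers and Hurwitz numbers, in the spirit of \cite{AbelianHurwitzHodge2011}) making non-negativity manifest and opening log-concavity to Lorentzian-polynomial or combinatorial-Hodge-theoretic methods. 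Part (1) alone is plausibly more tractable: a careful sign-tracking induction on $|\vec i|$ using the recursion in Corollary \ref{binomialbasis}, exploiting the cancellations of the type recorded in Lemma \ref{CombinatorialIdentity}, may already establish non-negativity of the binomial-basis coefficients even before the full real-rootedness is available.
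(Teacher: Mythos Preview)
The statement you are addressing is a \emph{Conjecture} in the paper, not a theorem: the paper offers no proof, only numerical evidence compiled in the Appendix. There is therefore no ``paper's own proof'' to compare against, and your write-up should be read as a research outline rather than a proof.

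As an outline it is honest about its own incompleteness: you yourself isolate the decisive gap, namely the quadratic terms $\sum_{\vec\ell}(-1)^{|\vec\ell|}D_{(\vec i-\vec\ell)}D_{(\vec\ell)}$ and their $d$-analogues entering with alternating signs. Nothing in the argument before that point survives this obstacle. The common-interlacer machinery you invoke requires \emph{non-negative} combinations, and the sign $(-1)^{|\vec\ell|}$ is genuine (it is exactly the $\vec s\mapsto -\vec s$ in Theorem~\ref{PDE}); there is no evident change of variables that makes the convolution sign-definite while keeping the polynomials real. So the proposal, as it stands, does not establish either part of the conjecture.

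Two smaller points. First, your reduction ``roots of $C_{\vec i}$ in $(-\infty,0]$ $\Leftrightarrow$ roots of the Laplace numerator $N_{\vec i}$ real and $\leq 1$'' is not quite right: under $x\mapsto 1/x$ a root $r\in(0,1]$ of $N_{\vec i}$ becomes a positive root of $C_{\vec i}$, so you actually need the roots of $N_{\vec i}$ to lie in $(-\infty,0]$, not merely $(-\infty,1]$. Second, real-rootedness is strictly stronger than log-concavity; even if Conjecture~\ref{unimodalconjecture} is true, the polynomials $C_{\vec i}$ need not be real-rooted, so building the whole argument on that hypothesis risks aiming at a statement that could be false. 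A more robust route to part~(1) alone --- which you mention at the end --- would be to attempt a direct sign-tracking induction through the recursion of Corollary~\ref{binomialbasis}, but that too must contend with the same alternating signs and is, to my knowledge, open.
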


\noindent In (\cite{AbelianHurwitzHodge2011},Theorem 2), Johnson et al discover the following vanishing result:

\begin{equation*}
\sum_{i = 0}^g(-2)^iD_{i, 2g + 2} = 0
\end{equation*}

\noindent This follows from the results in \cite{Afandi2019}. However, using the data in the Appendix, it is likely that the above vanishing result can be generalized:

\begin{Conjecture}
Let $g > 0$ and let $\ell \in \mathbb{Z}^n_{\geq 0}$ such that $|\vec{\ell}| \leq g - 1$. Then

\begin{enumerate}
\item{$\displaystyle \sum_{i = 0}^g(-2)^iD_{(\vec{\ell}, i), 2g + 2}  =  0$}
\item{The sequences $\left\{2^{|\vec{\ell}| + i + 1}D_{(\vec{\ell}, i), 2g + 2}\right\}_{i \geq 0}$ and $\left\{2^{|\vec{\ell}| + i + 1}d_{(\vec{\ell}, i), 2g + 2}\right\}_{i \geq 0}$ are log-concave. } 
\end{enumerate}
\end{Conjecture}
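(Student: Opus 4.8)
I expect part (1) to be accessible and part (2) to be the genuine difficulty.

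\emph{Part (1).} The first step is to repackage the alternating sum as a single intersection number. Let $\gamma_1,\dots,\gamma_g$ be the Chern roots of $\mathbb{E}_g$, so that $\sum_{i}(-2)^i\lambda_i\psi_j^{\,g-i}=\prod_{k=1}^{g}(\psi_j-2\gamma_k)$; pulling the powers of $\psi_j$ into the sum gives
\begin{equation*}
\sum_{i=0}^{g}(-2)^iD_{(\vec{\ell},i),2g+2}=\int_{\mbar_{0,(2g+2)t}(\bztwo)}\lambda_{\vec{\ell}}\,\psi_j^{\,g-1-|\vec{\ell}|}\prod_{k=1}^{g}(\psi_j-2\gamma_k),
\end{equation*}
where the hypothesis $|\vec{\ell}|\le g-1$ is exactly what makes $\psi_j^{\,g-1-|\vec{\ell}|}$ a nonnegative power. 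The plan is then to use the Weierstrass filtration of the Hodge bundle on $\overline{\mathcal{H}}_{g,2g+2}$: since the gap sequence at a Weierstrass point is $1,3,\dots,2g-1$, the bundle $\mathbb{E}_g$ carries a filtration by vanishing order at the $j$-th marked point whose graded pieces have first Chern class $\tfrac{2k-1}{2}\psi_j$ up to classes supported on the boundary (here $\psi_j$, the base cotangent line, is twice the cotangent line of the hyperelliptic curve at the Weierstrass point). Hence one factor of the product equals $\psi_j-2\cdot\tfrac12\psi_j=0$ modulo a boundary class, so the whole product lies in the ideal generated by that class. One then finishes by induction on $g$, using Theorem \ref{NPHrecursion} (at $k=0$) to express the boundary contributions through lower genus, with the base case $\vec{\ell}=\vec{0}$ supplied by the closed formula of \cite{Afandi2019}, where the identity is the elementary computation $\tfrac12\prod_{k=1}^{g}\bigl(1-(2k-1)\bigr)=0$. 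A more self-contained alternative is to build a new vanishing auxiliary integral on $\aux$ in the spirit of Section \ref{localization}: choose the torus weights on $\mathcal{O}_{\mathbb{P}^1}(-1)$ so that the restriction of $c_\bullet\bigl(R^1\pi_*f^*\mathcal{O}_{\mathbb{P}^1}(-1)\bigr)$ to the fixed locus over $0$ assembles into $\sum_i(-2)^i\lambda_i$, and read off the resulting relation.

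\emph{Part (2).} The log-concavity of $\{2^{|\vec{\ell}|+i+1}D_{(\vec{\ell},i),2g+2}\}_{i\ge0}$ and $\{2^{|\vec{\ell}|+i+1}d_{(\vec{\ell},i),2g+2}\}_{i\ge0}$ is where I expect the real obstruction. In the case $\vec{\ell}=\vec{0}$ it reduces, via \cite{Afandi2019}, to log-concavity of $\{e_i(1,3,\dots,2g-1)\}_i$, which is precisely Newton's inequalities for a positive real sequence; this suggests the general statement is a ``Newton inequality for intersection numbers'' and points toward a Hodge-theoretic argument. The most promising route is to realize the normalized numbers as the coefficients of a Lorentzian polynomial in the sense of Br\"{a}nd\'{e}n--Huh: pass to the complete flag bundle $\mathrm{Fl}(\mathbb{E}_g)\to\overline{\mathcal{H}}_{g,2g+2}$, where $\lambda_i=e_i(x_1,\dots,x_g)$ for the tautological roots and $\sum_i\lambda_i y^i=\prod_k(1+x_ky)$, and try to show that the bihomogeneous polynomial $\sum_i\bigl(2^{|\vec{\ell}|+i+1}D_{(\vec{\ell},i),2g+2}\bigr)x^iy^{\,2g-1-|\vec{\ell}|-i}$ is Lorentzian, for instance by exhibiting it as a volume polynomial of nef classes and invoking the Khovanskii--Teissier/Hodge-index inequalities. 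The obstruction is that $\lambda_i$ is an individual Chern class rather than a power of a single nef divisor, so one needs a presentation in which $\sum_i\lambda_i x^i\cdot\lambda_{\vec{\ell}}$ literally becomes a product of nef divisors on a smooth projective model. A second, more computational route would be to first extend the closed formula of \cite{Afandi2019} to arbitrarily many $\lambda$-insertions and then deduce log-concavity from classical facts about which operations on symmetric functions (products, Hadamard products, specialization at positive reals) preserve log-concavity.

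\emph{Main obstacle.} The recursions of Theorem \ref{NPHrecursion} express $D_{(\vec{\ell},i),2g+2}$ and $d_{(\vec{\ell},i),2g+2}$ as \emph{signed} sums of products, and log-concavity is not preserved under signed sums, so a direct induction on $|\vec{\ell}|$ and $g$ cannot succeed for part (2); one needs either the geometric input above or a stronger invariant --- a genuinely Lorentzian bivariate polynomial --- that is closed under the operations appearing in the recursion. Even in part (1) the point that is not purely formal is controlling the boundary corrections in the Weierstrass filtration, equivalently showing that the boundary contributions to the localization sum telescope; and this same combinatorial input is what would be needed to make the induction in part (2) go through.
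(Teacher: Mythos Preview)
The paper does not prove this statement: it appears in Section \ref{conjectures} (``Conjectures, Open Problems'') as a conjecture, motivated only by the numerical data in the Appendix and the special case $\vec{\ell}=\vec{0}$ of Johnson--Pandharipande--Tseng. There is no proof or proof sketch to compare against.

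Your proposal is explicitly a strategy rather than a proof, and you are right to frame it that way. A few remarks on its merits. For part (1), your repackaging of the sum as $\int\lambda_{\vec{\ell}}\,\psi_j^{\,g-1-|\vec{\ell}|}\prod_k(\psi_j-2\gamma_k)$ is correct, and the base case $\vec{\ell}=\vec{0}$ does reduce via \cite{Afandi2019} to $\tfrac12\prod_{k=1}^g\bigl(1-(2k-1)\bigr)=0$ as you say. The Weierstrass-filtration heuristic is plausible --- the closed formulas $e_i(1,3,\dots,2g-1)$ are exactly what one would predict from Chern roots $(2k-1)/2\cdot\psi_j$ --- but making the boundary corrections precise is, as you note, the actual content; nothing in the paper supplies that. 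Your alternative of building a tailored auxiliary integral on $\aux$ is closer in spirit to the paper's methods, but no such integral is identified here.

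For part (2) you correctly diagnose the obstruction: the recursions of Theorem \ref{NPHrecursion} are alternating, and log-concavity does not survive signed sums, so induction along the paper's recursions cannot work on its own. The Lorentzian/Hodge-index route is a reasonable aspiration, but the step ``exhibit $\sum_i\lambda_i x^i\cdot\lambda_{\vec{\ell}}$ as a product of nef divisors on a smooth projective model'' is precisely the missing geometric input, and nothing in the paper points toward it. In short, your assessment of where the difficulties lie is accurate; neither you nor the paper has a proof.
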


\noindent Finally, recall the result of Faber and Pandharipande \cite{FP2000}:

\begin{equation*}
D_{(g-1, g), 2g + 2} = \frac{2^{2g} - 1}{2g}|B_{2g}|
\end{equation*}

\noindent where $B_{2g}$ is the $2g^{th}$ Bernoulli number. By Theorem \ref{Integrality}, we know that $2^{2g}D_{(g - 1, g), 2g + 2} \in \mathbb{Z}$. It turns out that the integer $2^{2g}D_{(g - 1, g), 2g + 2}$ has an enumerative meaning:

\begin{Corollary}\label{zigzag}
Let $Z_g$ be the number of alternating permutations on $[1, 2, \ldots, 2g - 1]$. Then

\begin{equation*}
2^{2g}D_{(g - 1, g), 2g + 2} = Z_g
\end{equation*}

\end{Corollary}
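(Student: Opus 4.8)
The plan is to combine the closed-form expression for $D_{(g-1,g),2g+2}$ from \cite{Afandi2019} with a classical fact from enumerative combinatorics relating the tangent numbers (odd Euler numbers) to both the Bernoulli numbers and the count of alternating permutations. Recall from the introduction that Afandi's result gives
\begin{equation*}
D_{(g-1,g),2g+2} = \left(\tfrac{1}{2}\right)^{g+1} e_{g-1}(1,3,5,\ldots,2g-1),
\end{equation*}
since $|\vec{i}| = (g-1)+g = 2g-1$ forces the $\psi$-exponent to be zero and the relevant elementary symmetric polynomial is $e_{g-1}$ in the $g$ odd numbers $1,3,\ldots,2g-1$. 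So $2^{2g}D_{(g-1,g),2g+2} = 2^{g-1}e_{g-1}(1,3,\ldots,2g-1)$. The first step is therefore to identify this quantity with $Z_g$, the number of alternating permutations of $\{1,\ldots,2g-1\}$.

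First I would recall the standard generating-function identity $\tan x = \sum_{g\ge 1} Z_g \tfrac{x^{2g-1}}{(2g-1)!}$ (the tangent/zigzag numbers; see e.g. Stanley's \emph{Enumerative Combinatorics}), together with the classical formula $Z_g = \tfrac{(-1)^{g-1}2^{2g}(2^{2g}-1)}{2g}B_{2g} = \tfrac{2^{2g}(2^{2g}-1)}{2g}|B_{2g}|$. Combining this with the Faber--Pandharipande formula $D_{(g-1,g),2g+2} = \tfrac{2^{2g}-1}{2g}|B_{2g}|$ quoted in the excerpt gives immediately
\begin{equation*}
2^{2g} D_{(g-1,g),2g+2} = 2^{2g}\cdot\frac{2^{2g}-1}{2g}|B_{2g}| = \frac{2^{2g}(2^{2g}-1)}{2g}|B_{2g}| = Z_g,
\end{equation*}
which is exactly the claim. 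Alternatively — and this is the route I would actually present, to keep the paper self-contained and to avoid leaning on the Bernoulli detour — I would prove directly that $2^{g-1}e_{g-1}(1,3,\ldots,2g-1) = Z_g$ by a generating-function argument: the product $\prod_{k=1}^{g}(1 + (2k-1)x) = \sum_{j}e_j(1,3,\ldots,2g-1)x^j$, and one shows after the substitution that the exponential generating function of $2^{g-1}e_{g-1}(1,3,\ldots,2g-1)$ in $g$ equals $\tan x$. Concretely, one uses the known identity $\sec x + \tan x = \prod$-type expansion, or more cleanly the fact that the secant and tangent numbers appear as the values at $0$ of iterated derivatives; the combinatorial identity $Z_g = 2^{g-1}e_{g-1}(1,3,\ldots,2g-1)$ is itself classical (it is the statement that the tangent number counts certain labeled increasing trees, whose weights multiply out to the product of odd numbers).

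The main obstacle is purely bookkeeping: making sure the indexing and the power of $2$ line up. There are three easy-to-mismatch conventions in play — whether "alternating permutations on $[1,\ldots,2g-1]$" means up-down or down-up (the count $Z_g$ is the same either way by reversal), the exact normalization $2^{|\vec{i}|+1}$ from Theorem~\ref{Integrality} versus the $2^{2g}$ appearing in the statement (note $|\vec{i}|+1 = 2g$, so these agree), and the shift between $e_{g-1}$ of $g$ terms versus $e_g$ of $g+1$ terms. Once these are pinned down, the proof is a two-line consequence of Afandi's formula (or of Faber--Pandharipande's) together with the classical evaluation of tangent numbers, so I would write it as a short \texttt{proof} environment citing \cite{Afandi2019} and \cite{FP2000} and a standard combinatorics reference for $\tan x = \sum Z_g x^{2g-1}/(2g-1)!$ and $Z_g = 2^{g-1}e_{g-1}(1,3,\ldots,2g-1)$.
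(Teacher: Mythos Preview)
Your first route --- combining the Faber--Pandharipande evaluation
\[
D_{(g-1,g),2g+2}=\frac{2^{2g}-1}{2g}\,|B_{2g}|
\]
with the classical tangent-number formula $Z_g=\dfrac{2^{2g}(2^{2g}-1)}{2g}\,|B_{2g}|$ --- is correct and is exactly the argument the paper has in mind (the corollary is stated immediately after recalling the Faber--Pandharipande result, with no separate proof given). That two-line deduction is all that is needed.

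The second route, however, does not work, and since you say this is the one you would ``actually present'', it is a genuine gap. The formula from \cite{Afandi2019} quoted in the introduction,
\[
\int_{\mbar_{0,(2g+2)t}(\bztwo)}\psi_1^{\,2g-1-i}\lambda_i=\Bigl(\tfrac{1}{2}\Bigr)^{i+1}e_i(1,3,\ldots,2g-1),
\]
applies to integrals with a \emph{single} $\lambda$-class insertion. The integral $D_{(g-1,g),2g+2}$ has the two-class monomial $\lambda_{g-1}\lambda_g$, so that formula simply does not evaluate it; your claimed identity $D_{(g-1,g),2g+2}=(1/2)^{g+1}e_{g-1}(1,3,\ldots,2g-1)$ is false already at $g=2$ (the left side is $1/8$ from the tables, the right side is $(1/8)\cdot 4=1/2$). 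Consequently the ``classical'' identity $Z_g=2^{g-1}e_{g-1}(1,3,\ldots,2g-1)$ you planned to invoke is also false (for $g=3$ the right side is $4\cdot 23=92$, while $Z_3=16$). Drop this alternative and keep only the Faber--Pandharipande/Bernoulli argument.
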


\noindent This observation prompts the following line of questioning:

\begin{Question}
Does there exist an enumerative interpretation of the integers $2^{|\vec{i}| + 1}D_{(\vec{i}), 2g + 2}$ and $2^{|\vec{i}| + 1}d_{(\vec{i}), 2g + 2}$ as counting subsets of permutations of $[1, 2, \ldots, 2g - 1]$, as in Corollary \ref{zigzag}? As a starting point, in light of the results obtained in \cite{Afandi2019}, is there a way to interpret the integers $e_i(1, 3, \ldots, 2g - 1)$ and $e_i(2, 4, \ldots, 2g)$ in this way?
\end{Question}

\noindent An affirmative answer to this question would provide an elegant governing principle for $\mathbb{Z}_2$ Hurwitz-Hodge integrals. Some new ideas, either coming from combinatorics or geometry, will be required to approach these questions.


\appendix

\section{$\mathbb{Z}_2$ Hurwitz-Hodge Integrals}

Using Theorem \ref{NPHrecursion} and Equation \ref{PHrecursion}, we compute many examples of $\mathbb{Z}_2$ Hurwitz-Hodge integrals. We implement the computations in Maple. The code can be found at the author's website \cite{Website}. \\

\begin{center}
\noindent \underline{{\bf{g = 1}}}
\end{center}

\begin{center}
\begin{tabular}{|c|c|}
\hline
Hodge Monomial $\lambda_{\vec{i}}$ & $D_{(\vec{i}), 2(g) + 2}$ \\
\hline
$\lambda_1$ & $\frac{1}{4}$ \\
\hline
\end{tabular}
\quad
\begin{tabular}{|c|c|}
\hline
Hodge Monomial $\lambda_{\vec{i}}$ & $d_{(\vec{i}), 2(g) + 2}$ \\
\hline
$\lambda_1$ & $\frac{1}{2}$ \\
\hline
\end{tabular}
\end{center}

\begin{center}
\underline{{\bf{g = 2}}}
\end{center}

\begin{center}
\begin{tabular}{|c|c|}
\hline
Hodge Monomial $\lambda_{\vec{i}}$ & $D_{(\vec{i}), 2(g) + 2}$ \\
\hline
$\lambda_1$ & $1$ \\
\hline
$\lambda_2$ &  $\frac{3}{8}$ \\
\hline
$\lambda_1\lambda_2$ & $\frac{1}{8}$ \\
\hline
\end{tabular}
\quad
\begin{tabular}{|c|c|}
\hline
Hodge Monomial $\lambda_{\vec{i}}$ & $d_{(\vec{i}), 2g + 2}$ \\
\hline
$\lambda_1$ & $\frac{3}{2}$ \\
\hline
$\lambda_2$ & $1$ \\
\hline
$\lambda_1\lambda_2$ & $\frac{1}{2}$ \\
\hline
\end{tabular}
\end{center}

\begin{center}
\underline{{\bf{g = 3}}}
\end{center}

\begin{center}
\begin{tabular}{|c|c|}
\hline
Hodge Monomial $\lambda_{\vec{i}}$ & $D_{(\vec{i}), 2g + 2}$ \\
\hline
$\lambda_1$ & $\frac{9}{4}$ \\
\hline
$\lambda_2$ & $\frac{23}{8}$ \\
\hline
$\lambda_3$ & $\frac{15}{16}$ \\
\hline
$\lambda_1\lambda_2$ & $\frac{67}{16}$ \\
\hline
$\lambda_1\lambda_3$ & $\frac{15}{16}$ \\
\hline
$\lambda_2\lambda_3$ & $\frac{1}{4}$ \\
\hline
\end{tabular}
\quad
\begin{tabular}{|c|c|}
\hline
Hodge Monomial $\lambda_{\vec{i}}$ & $d_{(\vec{i}), 2g + 2}$ \\
\hline
$\lambda_1$ & $3$ \\
\hline
$\lambda_2$ & $\frac{11}{2}$ \\
\hline
$\lambda_3$ & $3$ \\
\hline
$\lambda_1\lambda_2$ & $12$ \\
\hline
$\lambda_1\lambda_3$ & $4$  \\
\hline
$\lambda_2\lambda_3$ & $\frac{3}{2}$ \\
\hline
\end{tabular}
\end{center}

\pagebreak

\begin{center}
\underline{{\bf{g = 4}}}
\end{center}

\begin{center}
\begin{tabular}{|c|c|}
\hline
Hodge Monomial $\lambda_{\vec{i}}$ & $D_{(\vec{i}), 2g + 2}$ \\
\hline
$\lambda_1$ & $4$ \\
\hline
$\lambda_2$ & $\frac{43}{4}$ \\
\hline
$\lambda_3$ & $11$ \\
\hline
$\lambda_4$ & $\frac{105}{32}$ \\
\hline
$\lambda_1\lambda_2$ & $\frac{155}{4}$ \\
\hline 
$\lambda_1\lambda_3$ & $\frac{221}{8}$ \\
\hline
$\lambda_1\lambda_4$ & $\frac{105}{16}$ \\
\hline
$\lambda_2\lambda_3$ & $\frac{403}{16}$ \\
\hline
$\lambda_2\lambda_4$ & $\frac{147}{32}$ \\
\hline
$\lambda_3\lambda_4$ & $\frac{17}{16}$ \\
\hline 
$\lambda_1\lambda_2\lambda_4$ & $\frac{27}{8}$ \\
\hline
\end{tabular}
\quad
\begin{tabular}{|c|c|}
\hline
Hodge Monomial $\lambda_{\vec{i}}$ & $d_{(\vec{i}), 2g + 2}$ \\
\hline
$\lambda_1$ & $5$ \\
\hline
$\lambda_2$ & $\frac{35}{2}$ \\
\hline
$\lambda_3$ & $25$ \\
\hline
$\lambda_4$ & $12$ \\
\hline
$\lambda_1\lambda_2$ & $85$ \\
\hline 
$\lambda_1\lambda_3$ & $85$ \\
\hline
$\lambda_1\lambda_4$ & $30$ \\
\hline
$\lambda_2\lambda_3$ & $\frac{211}{2}$ \\
\hline
$\lambda_2\lambda_4$ & $27$ \\
\hline
$\lambda_3\lambda_4$ & $\frac{17}{2}$ \\
\hline 
$\lambda_1\lambda_2\lambda_4$ & $27$ \\
\hline
\end{tabular}
\end{center}

\begin{center}
\underline{{\bf{g = 5}}}
\end{center}

\begin{center}
\begin{tabular}{|c|c|}
\hline
Hodge Monomial $\lambda_{\vec{i}}$ & $D_{(\vec{i}), 2g + 2}$ \\
\hline
$\lambda_1$ & $\frac{25}{4}$ \\
\hline
$\lambda_2$ & $\frac{115}{4}$ \\
\hline
$\lambda_3$ & $\frac{475}{8}$ \\
\hline
$\lambda_4$ & $\frac{1689}{32}$ \\
\hline
$\lambda_5$ & $\frac{945}{64}$ \\
\hline
$\lambda_1\lambda_2$ & $\frac{1555}{8}$ \\
\hline 
$\lambda_1\lambda_3$ & $\frac{1195}{4}$ \\
\hline
$\lambda_1\lambda_4$ & $\frac{13185}{64}$ \\
\hline
$\lambda_1\lambda_5$ & $\frac{1575}{32}$ \\
\hline
$\lambda_2\lambda_3$ & $\frac{18599}{32}$ \\
\hline
$\lambda_2\lambda_4$ & $\frac{10179}{32}$ \\
\hline
$\lambda_2\lambda_5$ & $\frac{4095}{64}$ \\
\hline
$\lambda_3\lambda_4$ & $\frac{14801}{64}$ \\
\hline 
$\lambda_3\lambda_5$ & $\frac{1185}{32}$ \\
\hline
$\lambda_4\lambda_5$ & $\frac{31}{4}$ \\
\hline
$\lambda_1\lambda_2\lambda_3$ & $\frac{56119}{32}$ \\
\hline
$\lambda_1\lambda_2\lambda_4$ & $\frac{47367}{64}$ \\
\hline
$\lambda_1\lambda_2\lambda_5$ & $\frac{1845}{16}$ \\
\hline
$\lambda_1\lambda_3\lambda_4$ & $\frac{11835}{32}$ \\
\hline
$\lambda_1\lambda_3\lambda_5$ & $\frac{139}{4}$ \\
\hline
$\lambda_2\lambda_3\lambda_4$ & $\frac{1381}{8}$ \\
\hline
\end{tabular}
\quad
\begin{tabular}{|c|c|}
\hline
Hodge Monomial $\lambda_{\vec{i}}$ & $d_{(\vec{i}), 2g + 2}$ \\
\hline
$\lambda_1$ & $\frac{15}{2}$ \\
\hline
$\lambda_2$ & $\frac{85}{2}$ \\
\hline
$\lambda_3$ & $\frac{225}{2}$ \\
\hline
$\lambda_4$ & $137$ \\
\hline
$\lambda_5$ & $60$ \\
\hline
$\lambda_1\lambda_2$ & $\frac{725}{2}$ \\
\hline 
$\lambda_1\lambda_3$ & $725$ \\
\hline
$\lambda_1\lambda_4$ & $680$ \\
\hline
$\lambda_1\lambda_5$ & $240$ \\
\hline
$\lambda_2\lambda_3$ & $\frac{3637}{2}$ \\
\hline
$\lambda_2\lambda_4$ & $\frac{2687}{2}$ \\
\hline
$\lambda_2\lambda_5$ & $381$ \\
\hline
$\lambda_3\lambda_4$ & $1279$ \\
\hline 
$\lambda_3\lambda_5$ & $278$ \\
\hline
$\lambda_4\lambda_5$ & $\frac{155}{2}$ \\
\hline
$\lambda_1\lambda_2\lambda_3$ & $\frac{14295}{2}$ \\
\hline
$\lambda_1\lambda_2\lambda_4$ & $4087$ \\
\hline
$\lambda_1\lambda_2\lambda_5$ & $864$ \\
\hline
$\lambda_1\lambda_3\lambda_4$ & $2762$ \\
\hline
$\lambda_1\lambda_3\lambda_5$ & $\frac{695}{2}$ \\
\hline
$\lambda_2\lambda_3\lambda_4$ & $\frac{6905}{4}$ \\
\hline
\end{tabular}
\end{center}

\section{Integer Valued Polynomials}

In this section, we compile tables of various examples of the integer valued polynomials $2^{|\vec{i}| + 1}D_{(\vec{i}), 2g + 2}$ and $2^{|\vec{i}| + 1}d_{(\vec{i}), 2g + 2}$. If the reader decides to read the data, we recommend reading the data in light of of Conjecture \ref{polynomialconjecture} and Conjecture \ref{unimodalconjecture}.

\begin{center}
\begin{tabular}{|c|c|}
\hline
Hodge Monomial $\lambda_{\vec{i}}$ & $2^{|\vec{i}| + 1}\cdot D_{(\vec{i}), 2g + 2}$ \\
\hline
 $\lambda_1$ &  $\displaystyle {g \choose 2} + 2{g \choose 2}$ \\
 \hline 
 $\lambda_2$ & $\displaystyle 3{g \choose 2} + 14{g \choose 3} + 12{g \choose 4}$\\
 \hline
 $\lambda_3$ & $\displaystyle 15{g \choose 3} +  116{g \choose 4} + 220{g \choose 5} + 120{g \choose 6}$  \\
 \hline
 $\lambda_4$ & $\displaystyle 105{g \choose 4} + 1164{g \choose 5} + 3580{g \choose 6} + 4200{g \choose 7} + 1680{g \choose 8}$ \\
 \hline
 $\lambda_5$ & $\displaystyle 945{g \choose 5} + 13854{g \choose 6} + 60508{g \choose 7} + 113120{g \choose 8} + 95760{g \choose 9} + 30240{g \choose 10}$ \\
 \hline
\end{tabular}
\end{center}

\begin{center}
\begin{tabular}{|c|c|}
\hline
Hodge Monomial $\lambda_{\vec{i}}$ & $2^{|\vec{i}| + 1}\cdot d_{(\vec{i}), 2g + 2}$ \\
\hline 
$\lambda_1$ & $\displaystyle 2{g \choose 1} + 2{g \choose 2}$ \\
 \hline 
 $\lambda_2$ & $\displaystyle 8{g \choose 2} + 20{g \choose 3} + 12{g \choose 4}$  \\
 \hline
 $\lambda_3$ & $\displaystyle 48{g \choose 3} + 208{g \choose 4} + 280{g \choose 5} + 120{g \choose 6}$ \\
 \hline
 $\lambda_4$ & $\displaystyle 384{g \choose 4} + 2464{g \choose 5} + 5440{g \choose 6} + 5040{g \choose 7} + 1680{g \choose 8}$ \\
 \hline
 $\lambda_5$ & $\displaystyle 3840{g \choose 5} + 33408{g \choose 6} + 105728{g \choose 7} + 156800{g  \choose 8} + 110880{g \choose 9} + 30240{g \choose 10}$ \\
 \hline
\end{tabular}
\end{center}

\begin{center}
\begin{tabular}{|c|c|}
\hline
Hodge Monomial $\lambda_{\vec{i}}$ & $2^{|\vec{i}| + 1}\cdot D_{(\vec{i}), 2g + 2}$ \\
\hline 
$\lambda_1\lambda_2$ & $\displaystyle 2{g \choose 2} + 61{g \choose 3} + 364{g \choose 4} + 660{g \choose 5} + 360{g \choose 6}$  \\
\hline 
$\lambda_1\lambda_3$ & $\displaystyle 30{g \choose 3} + 764{g \choose 4} + 5440{g \choose 5} + 14800{g \choose 6} + 16800{g \choose 7} + 6720 {g \choose 8} $ \\
\hline 
$\lambda_2\lambda_3$ & $\begin{array}{l} \displaystyle 16{g \choose 3} + 1548{g \choose 4} + 29298{g \choose 5} + 208724{g \choose 6}  \\ \displaystyle + 697144{g \choose 7} + 1171520{g \choose 8} + 957600{g \choose 9} + 302400{g \choose 10}\end{array}$ \\
\hline 
$\lambda_2\lambda_4$ & $\begin{array}{l} \displaystyle 588{g \choose 4} +  37776{g \choose 5} + 688661{g \choose 6} + 5395054{g \choose 7} + 21681016{g \choose 8} \\ \displaystyle + 48109152{g \choose 9} + 59446800{g \choose 10} + 38253600{g \choose 11} + 9979200{g \choose 12} \end{array}$ \\
\hline
$\lambda_3\lambda_4$ & $\begin{array}{l} \displaystyle 272{g \choose 4} + 57844{g \choose 5} + 2318756{g \choose 6} + 36063149{g \choose 7} + 281352536{g \choose 8} \\ \displaystyle + 1243038072{g \choose 9} + 3296287120{g \choose 10} + 5336685200{g \choose 11} \\ \displaystyle + 5154811200{g \choose 12} + 2724321600{g \choose 13} + 605404800{g \choose 14} \end{array}$ \\
\hline
$\lambda_3\lambda_5$ & $\begin{array}{l} \displaystyle 18960{g \choose 5} + 2372368{g \choose 6} + 82990414{g \choose 7} + 1277796904{g \choose 8} + 10577739904{g \choose 9} \\
\displaystyle + 52202352960{g \choose 10} + 162223105440{g \choose 11} + 324989181440{g \choose 12} + 418367389440{g \choose 13} \\ \displaystyle +  333860567040{g \choose 14} +  150140390400{g \choose 15} + 29059430400{g \choose 16} \end{array}$\\
\hline
$\lambda_4\lambda_5$ & $\begin{array}{l} \displaystyle 7936{g \choose 5} + 3077720{g \choose 6} + 218029720{g \choose 7} + 6007887736{g \choose 8} + 84538397486{g \choose 9} \\ \displaystyle + 696641555804{g \choose 10} + 3632616132464{g \choose 11} + 12530730860032{g \choose 12} \\ \displaystyle  + 29238584620960{g \choose 13} + 46328057455680{g \choose 14} + 49056314186880{g \choose 15} \\ \displaystyle + 33195555993600{g \choose 16} + 12967770816000{g \choose 17} + 2223046425600{g \choose 18} \end{array}$\\
\hline
\end{tabular}
\end{center}

\begin{center}
\begin{tabular}{|c|c|}
\hline
Hodge Monomial $\lambda_{\vec{i}}$ & $2^{|\vec{i}| + 1}d_{(\vec{i}), 2g + 2}$ \\
\hline 
$\lambda_1\lambda_2$ & $ \displaystyle 8{g \choose 2} + 168{g \choose 3} + 640{g \choose 4} + 840{g \choose 5} + 360{g \choose 6}$\\
\hline 
$\lambda_1\lambda_3$ & $ \displaystyle 128{g \choose 3} + 2208{g \choose 4} + 10880{g \choose 5} + 22240{g \choose 6} + 20160{g \choose 7} + 6720{g \choose 8}$ \\
\hline 
$\lambda_2\lambda_3$ & $ \begin{array}{c} \displaystyle 96{g \choose 3} + 6368{g \choose 4} + 83584{g \choose 5} + 444896{g \choose 6} \\ \displaystyle + 1169504{g \choose 7} + 1608320{g \choose 8} + 1108800{g \choose 9} + 302400{g \choose 10} \end{array}$\\
\hline 
$\lambda_2\lambda_4$ & $ \begin{array}{c} \displaystyle 3456{g \choose 4} + 154688{g \choose 5} + 2037312{g \choose 6} + 12279680{g \choose 7} \\ \displaystyle + 39703552{g \choose 8} + 73258752{g \choose 9} + 77212800{g \choose 10} + 43243200{g \choose 11} + 9979200{g \choose 12} \end{array}$ \\
\hline
$\lambda_3\lambda_4$ & $ \begin{array}{c} \displaystyle 2176{g \choose 4} + 316544{g \choose 5} + 8989696{g \choose 6} + 106257664{g \choose 7} \\ \displaystyle + 661451008{g \choose 8} + 2413320192{g \choose 9} + 5421758720{g \choose 10} + 7586163200{g \choose 11} \\ \displaystyle + 6435475200{g \choose 12} + 3027024000{g \choose 13} + 605404800{g \choose 14}\end{array}$\\
\hline
$\lambda_3\lambda_5$ & $\begin{array}{c} \displaystyle 142336{g \choose 5} + 12537344{g \choose 6} + 321875456{g \choose 7} + 3857878016{g \choose 8} \\ \displaystyle + 25934706688{g \choose 9} + 107151344640{g \choose 10} + 285191920640{g \choose 11} \\ \displaystyle + 498116917760{g \choose 12} + 567135528960{g \choose 13} + 405056171520{g \choose 14} \\ \displaystyle  + 164670105600{g \choose 15} + 29059430400{g \choose 16}\end{array}$\\
\hline
$\lambda_4\lambda_5$ & $\begin{array}{c} \displaystyle 79360{g \choose 5} + 21071360{g \choose 6} + 1068101632{g \choose 7} + 22535566336{g \choose 8} \\ \displaystyle  + 254648976384{g \choose 9} + 1742858805248{g \choose 10} + 7737100743680{g \choose 11} \\ \displaystyle + 23154969480192{g \choose 12} + 47579454208000{g \choose 13} + 67202354419200{g \choose 14} \\ \displaystyle + 64079434298880{g \choose 15} + 39385214668800{g \choose 16} \\ \displaystyle + 14079294028800{g \choose 17} + 2223046425600{g \choose 18}\end{array}$ \\
\hline
\end{tabular}
\end{center}

\begin{center}
\begin{tabular}{|c|c|}
\hline
Hodge Monomial $\lambda_{\vec{i}}$ & $2^{|\vec{i}| + 1}D_{(\vec{i}), 2g + 2}$ \\
\hline 
$\lambda_1\lambda_2\lambda_3$ & $ \begin{array}{c} \displaystyle 3976{g \choose 4} + 204596{g \choose 5} + 3262812{g \choose 6} + 23632720{g \choose 7} \\ \displaystyle + 90667808{g \choose 8} + 195992832{g \choose 9} + 238996800{g \choose 10} \\ \displaystyle + 153014400{g \choose 11} + 39916800{g \choose 12}\end{array}$\\
\hline 
$\lambda_2\lambda_3\lambda_4$ & $\begin{array}{c} \displaystyle 176768{g \choose 5} + 52956760{g \choose 6} + 3233676672{g \choose 7} + 79792843192{g \choose 8} \\ \displaystyle + 1033784133532{g \choose 9} + 8008401061128{g \choose 10} + 39884957383392{g \choose 11} \\ \displaystyle + 133034003257408{g \choose 12} + 303050501490880{g \choose 13} + 472326815600640{g \choose 14} \\ \displaystyle + 494856672710400{g \choose 15} + 332827342848000{g \choose 16} \\ \displaystyle + 129677708160000{g \choose 17} + 22230464256000{g \choose 18}\end{array}$ \\
\hline 
\end{tabular}
\end{center}

\begin{center}
\begin{tabular}{|c|c|}
\hline
Hodge Monomial $\lambda_{\vec{i}}$ & $2^{|\vec{i}| + 1}d_{(\vec{i}), 2g + 2}$ \\
\hline 
$\lambda_1\lambda_2\lambda_3$ & $ \begin{array}{c} \displaystyle 23296{g \choose 4} + 798400{g \choose 5} + 9337664{g \choose 6} + 52746112{g \choose 7} \\ \displaystyle + 164374784{g \choose 8} + 297196032{g \choose 9} + 310060800{g \choose 10} \\ \displaystyle + 172972800{g \choose 11} + 39916800{g \choose 12}\end{array}$\\
\hline 
$\lambda_2\lambda_3\lambda_4$ & $\begin{array}{c} \displaystyle 1767680{g \choose 5} + 352174336{g \choose 6} + 15267284992{g \choose 7} + 289757178880{g \choose 8} \\ \displaystyle + 3035405595648{g \choose 9} + 19658941571072{g \choose 10} + 83824149843456{g \choose 11} \\ \displaystyle + 243660123176448{g \choose 12} + 490474167040000{g \choose 13} + 683102129149440{g \choose 14} \\ \displaystyle + 645523765286400{g \choose 15} + 394723929600000{g \choose 16} \\ \displaystyle + 140792940288000{g \choose 17} + 22230464256000{g \choose 18}\end{array}$ \\
\hline 
\end{tabular}
\end{center}

\bibliographystyle{alpha}
\bibliography{bibliography}

\end{document}